\title{Classical actions of quantum permutation groups}
\author{Amaury Freslon}
\address{Laboratoire de Math\'ematiques d'Orsay, CNRS, Universit\'e Paris-Saclay, 91405 Orsay, France}
\email{amaury.freslon@universite-paris-saclay.fr}
\author{Frank Taipe}
\address{Instituto de Matemática y Ciencias Afines, Universidad Nacional de Ingeniería, 15012 Lima, Peru}
\email{frank.taipe@imca.edu.pe}
\author{Simeng Wang}
\address{Institute for Advanced Study in Mathematics, Harbin Institute of Technology, Harbin 150001, China}
\email{simeng.wang@hit.edu.cn}
\date{}
\subjclass[2010]{20G42, 05E10}
\theoremstyle{plain}
\newtheorem{thm}{Theorem}[section]
\newtheorem{lem}[thm]{Lemma}
\newtheorem{prop}[thm]{Proposition}
\newtheorem{cor}[thm]{Corollary}
\theoremstyle{definition}
\newtheorem{de}[thm]{Definition}
\newtheorem{ex}[thm]{Example}
\newtheorem{rem}[thm]{Remark}
\DeclareMathOperator{\Irr}{Irr}
\DeclareMathOperator{\id}{id}
\DeclareMathOperator{\Mor}{Mor}
\DeclareMathOperator{\spa}{span}
\DeclareMathOperator{\Stab}{Stab}
\newcommand{\C}{\mathbb{C}}
\newcommand{\D}{\Delta}
\newcommand{\E}{\mathbb{E}}
\newcommand{\F}{\varphi}
\newcommand{\G}{\mathbb{G}}
\newcommand{\HH}{\mathbb{H}}
\newcommand{\N}{\mathbb{N}}
\newcommand{\Z}{\mathbb{Z}}
\newcommand{\w}{\mathrm{w}}
\renewcommand{\O}{\mathcal{O}}
\newcounter{PartitionDepth}
\newcounter{PartitionLength}
\newsavebox{\boxpaarpart}
\newsavebox{\boxbaarpart}
\newsavebox{\boxdreipart}
\newsavebox{\boxvierpart}
\newsavebox{\boxvierpartrot}
\newsavebox{\boxuuddpartrot}
\newsavebox{\boxvierpartrotdrei}
\newsavebox{\boxcrosspart}
\newsavebox{\boxhalflibpart}
\newsavebox{\boxpositioner} 
\newsavebox{\boxfatcross} 
\newsavebox{\boxprimarypart} 
\newsavebox{\boxthreepartrot}
\newsavebox{\boxsixpartrot}
\newcommand{\vierpartrot}{\usebox{\boxvierpartrot}}
\begin{document}

\begin{abstract}
We present a complete study of rigidity for actions of quantum permutation groups on compact spaces. In particular, we describe explicitly all actions of the quantum permutation groups on classical compact spaces, and show that the defining action is the only non-trivial ergodic one. We then extend these results to all easy quantum groups associated to non-crossing partitions.
\end{abstract}

\maketitle

\section{Introduction}
Compact quantum groups where first defined by S.L. Woronowicz in \cite{woronowicz1987compact} and \cite{woronowicz1995compact} as generalisations of compact groups. Among several aspects, it was clear that they could serve as quantum symmetries of ``non-commutative'' spaces, and this was explored immediately through the notion of quantum homogeneous space, for instance in the work of P. Podle{\'s} and S.L. Woronowicz \cite{podles1990quantum}. Besides investigating the spaces on which a given quantum group can act, it was natural to consider the converse question: given a space, which quantum group can act on it? This quantum symmetry problem was already raised  for finite spaces by Connes in his celebrated monograph \cite[End of Chapter 6]{connesNCG}, and led to the introduction of a fundamental example by S.Z. Wang in \cite{wang1998quantum}, called the \emph{quantum permutation group on $N$ points} and denoted by $S_{N}^{+}$. In particular, it is a striking fact that despite being genuinely quantum, $S_{N}^{+}$ acts on a very simple classical space, namely a space consisting of $N$ points. What is even more striking is that such a phenomenon of a classical space having quantum symmetry seems very rare. Indeed, since 2010s, Goswami discovered that any isometric action of a compact quantum group on some typical Riemannian manifold always factors through a classical group, and conjectured that the aforementioned quantum permutations are the only ``genuine'' quantum symmetries on classical compact spaces (see for instance the preprint versions of \cite{goswami2018GAFA} and also \cite{huang2013faithful}). Later on, various rigidity results were established for quantum symmetries on classical spaces with a certain differential or metric structures (see for instance \cite{goswami2020non}, \cite{chirvasitu2020existence} and \cite{chirvasitu2021generic}), but the original rigidity question for general compact topological spaces remains unsolved. Let us mention that this is not true if one allows locally compact quantum group actions, as shown in \cite{goswami2017faithful}. Therefore, the quantum rigidity phenomenon seems specific to the compact setting, which is why we stick to it.


This paper presents a complete study of the rigidity phenomena for actions on classical compact spaces by $S_N^+$ and a general class of easy quantum groups introduced in \cite{banica2009liberation}. Note that the contrast between the behavior of $S_{N}^{+}$ on discrete spaces and the apparent rigidity of connected ones led to the question whether $S_{N}^{+}$ could act non-trivially on a connected space. It was pointed out by H. Huang \cite{huang2013faithful} that this is indeed possible, and the corresponding action may be intuitively understood as quantum permutations of $N$ objects which have the same shape and glue at the fixed point space. In this paper, we prove that these are the only actions of $S_N^+$ on classical compact spaces apart from the standard action on $N$ points and the trivial action. To this end, we first completely classify all ergodic actions of $S_{N}^{+}$ on classical spaces; this is based on a refinement of the arguments in \cite{freslon2021tannaka} by the same authors, where the tools from the Tannaka--Krein reconstruction theorem for ergodic actions of C. Pinzari and J.E. Roberts \cite{pinzari2008duality} are used. That being done, we use the theory of invariant subsets for quantum group actions developped by H. Huang in \cite{huang2016invariant} to reduce the study of arbitrary actions to that of ergodic ones. This is of course not straightforward since a continuous action on a topological space is not just determined by the behaviour of its orbits, but also by their relative positions and the way they interact with the topology. However, we are able to show that the examples of Huang describe all possible classical actions of $S_{N}^{+}$.

All these results for $S_N^+$ provide a test case for the general study of classical actions of compact quantum groups. In order to provide more examples and a wider picture, we also consider the other orthogonal easy quantum groups associated to non-crossing partitions from \cite{banica2009liberation}. For most of them, the classification follows from the previous methods and results with suitable adaptation. However, for the hyperoctahedral quantum group $H_{N}^{+}$, which is known to act both on $\C^{N}$ and $\C^{2N}$, things are more involved. Note that $H_{N}^{+}$ is known to be a quantum subgroup of $S_{2N}^{+}$, hence it is not surprising that it has non-trivial classical actions, but some of them are new and do not lift to $S_{2N}^{+}$.

Let us conclude this introduction with an outline of the paper. After the preliminary section~\ref{sec:preliminaries} where we will recall some facts about compact quantum groups and their actions, we will classify in Section~\ref{sec:ergodic} all classical ergodic actions of the quantum permutation group $S_{N}^{+}$. Then, in Section~\ref{sec:general}, we will use these results to classify all actions of $S_{N}^{+}$ on classical compact spaces. Eventually, we will extend these results in Section~\ref{sec:further} to all other orthogonal easy quantum groups.

\section*{Acknowledgments}

A.F. was supported by the ANR grant ``Noncommutative analysis on groups and quantum groups'' (ANR-19-CE40-0002) and the ANR grant ``Operator algebras and dynamics on groups'' (ANR-19-CE40-0008).  S.W. was  supported by the Fundamental Research Funds for the Central Universities and the NSF of China (No.12031004, No.12301161).

\section{Preliminiaries}\label{sec:preliminaries}

In this section, we recall the necessary background on compact quantum groups and their actions which will be used in the sequel, mainly to fix notations and terminology.

\subsection{Compact quantum groups}

This work is concerned with \emph{compact quantum groups}, which were introduced by S.L. Woronowicz in \cite{woronowicz1995compact}. We refer the reader to the book \cite{neshveyev2014compact} for a detailed treatment of the theory and proofs of the results mentionned below.

\begin{de}
A \emph{compact quantum group} is a pair $(C(\G), \D)$ where $C(\G)$ is a C*-algebra and
\begin{equation*}
\D : C(\G)\to C(\G)\otimes C(\G)
\end{equation*}
is a unital $*$-homomorphism satisfying the following two conditions:
\begin{enumerate}
\item $(\D\otimes \id)\circ\D = (\id\otimes \D)\circ\D$;
\item $\overline{\spa}\{\D(C(\G))(1\otimes C(\G))\} = C(\G)\otimes C(\G) = \overline{\spa}\{\D(C(\G))(C(\G)\otimes 1)\}$.
\end{enumerate}
\end{de}

We will use the notation $\G = (C(\G), \D)$ to denote a compact quantum group. The fundamental example is given by a compact group $G$, with $C(G)$ being the C*-algebra of continuous complex-valued functions on $G$ and the coproduct being defined through the formula
\begin{equation*}
\D(f) : (g, h)\mapsto f(gh),
\end{equation*}
which makes sense through the isomorphism $C(G\times G)\cong C(G)\otimes C(G)$. This justifies the previous notation, and we will now think of $\G$ as the object underlying the ``function algebra'' $C(\G)$.

Given a compact quantum group $\G$, a \emph{representation} of $\G$ of dimension $n$ is an invertible element $u$ in the C*-algebra $M_{n}(C(\G))\simeq M_{n}(\C)\otimes C(\G)$ such that
\begin{equation*}
\D(u_{ij}) = \sum_{k=1}^{n}u_{ik}\otimes u_{kj}.
\end{equation*}
It is clear that this coincides in the case of a compact group with the usual notion of a representation. Associated to $u$ is a \emph{coaction map} $\delta_{u} : \C^{n}\to \C^{n}\otimes C(\G)$ defined by the formula
\begin{equation*}
\delta_{u}(e_{i}) = \sum_{j=1}^{n}e_{j}\otimes u_{ji},
\end{equation*}
where $(e_{i})_{1\leqslant i\leqslant n}$ is the canonical basis of $\C^{n}$. The simplest example is the \emph{trivial representation}, which is $1\in C(\G)\simeq M_{1}(C(\G))$.

Given two representations $v$ and $w$, one can form their direct sum by considering a block-diagonal matrix with blocks $v$ and $w$ respectively, and their tensor product by considering the matrix with coefficients
\begin{equation*}
(v\XBox w)_{(i,k),(j, l)} = v_{ij}w_{kl}.
\end{equation*}
In this setting, an intertwiner between two representations $v$ and $w$ of dimension respectively $n$ and $m$ will be a linear map $T : \C^{n}\to \C^{m}$ such that $Tv = wT$ (we are here identifying $M_{n}(\C)$ with $M_{n}(\C 1_{C(\G)})\subset M_{n}(C(\G))$). The set of all intertwiners between $v$ and $w$ will be denoted by $\displaystyle\Mor_{\G}(v, w)$.

If $T$ is injective, then $v$ is said to be a \emph{subrepresentation} of $w$, and if $w$ admits no subrepresentation apart from itself, then it is said to be \emph{irreducible}. One of the fundamental results in the representation theory of compact quantum groups is due to S.L. Woronowicz in \cite{woronowicz1995compact} and can be summarized as follows:

\begin{thm}[Woronowicz]
Any finite-dimensional representation of a compact quantum group splits as a direct sum of irreducible ones, and any irreducible representation is equivalent to a unitary one, i.e. the matrix of any irreducible representation of dimension $n$ is the conjugate of a unitary element of $M_{n}(C(\G))$.
\end{thm}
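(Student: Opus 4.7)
The plan is to deploy the \emph{Haar state} $h:C(\G)\to\C$, the (unique) state characterised by the invariance identity $(h\otimes\id)\circ\D = h(\cdot)1 = (\id\otimes h)\circ\D$, whose existence is the central foundational result behind the rest of the representation theory. I will take both its existence and its faithfulness on the dense Hopf $*$-subalgebra generated by matrix coefficients of finite-dimensional representations as known, referring to \cite{neshveyev2014compact} as the paper does.

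For the unitarization half, given an invertible representation $u\in M_{n}(C(\G))$, I would introduce the positive matrix $Q = (\id\otimes h)(uu^{*})\in M_{n}(\C)$. Expanding
\begin{equation*}
\D\bigl((uu^{*})_{ij}\bigr)=\sum_{l,m}u_{il}u_{jm}^{*}\otimes(uu^{*})_{lm}
\end{equation*}
and applying $\id\otimes h$, invariance of the Haar state collapses one side to $Q_{ij}\cdot 1$ while converting the other to $(u(Q\otimes 1)u^{*})_{ij}$, yielding the key identity $u(Q\otimes 1)u^{*} = Q\otimes 1$. Granting that $Q$ is invertible, setting $v := Q^{-1/2}uQ^{1/2}$ then gives $vv^{*} = 1$; since $v$ inherits invertibility from $u$ and $Q$, the relation $v^{*} = v^{-1}$ upgrades this to $v^{*}v = 1$, so $v$ is a unitary representation equivalent to $u$. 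The main obstacle is the invertibility of $Q$. For this I would rewrite $\langle Q\xi,\xi\rangle = h\bigl(\sum_{k}a_{k}a_{k}^{*}\bigr)$ with $a_{k} = \sum_{i}\overline{\xi_{i}}u_{ik}$; positivity together with faithfulness of $h$ on matrix coefficients forces each $a_{k}=0$, which when rewritten as $u^{t}\overline{\xi} = 0$ and combined with invertibility of $u^{t}$ (inherited from $u$) forces $\xi = 0$. This faithfulness input is really where the argument stops being purely formal.

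For complete reducibility I may then assume $u$ is unitary. I would induct on $n$: if $V\subsetneq\C^{n}$ is a non-zero invariant subspace with orthogonal projection $p\in M_{n}(\C)$, the invariance condition reads $(1-p)u p = 0$, so $u$ is block upper-triangular with respect to the decomposition $\C^{n} = V\oplus V^{\perp}$. The unitarity relations $uu^{*} = u^{*}u = 1$ applied to this block form immediately force the off-diagonal block to vanish, so $u$ splits orthogonally as $u|_{V}\oplus u|_{V^{\perp}}$ as unitary representations, and iterating produces the desired decomposition into irreducibles.
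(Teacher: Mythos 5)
The paper does not actually prove this statement: it is quoted as Woronowicz's foundational theorem with references to \cite{woronowicz1995compact} and \cite{neshveyev2014compact}, so there is no internal proof to compare against and your argument must be judged on its own. The unitarization half is the standard averaging argument and the identity $u(Q\otimes 1)u^{*}=Q\otimes 1$ is correct. Two remarks there. First, you do not need faithfulness of $h$ to invert $Q$: since $u$ is invertible in the C*-algebra $M_{n}(C(\G))$, the positive element $uu^{*}$ is invertible, hence $uu^{*}\geqslant\varepsilon 1$ for some $\varepsilon>0$, and applying the unital positive map $\id\otimes h$ gives $Q\geqslant\varepsilon 1$ directly. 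This is worth doing, because faithfulness of $h$ on the Hopf $*$-algebra is normally derived from the orthogonality relations, i.e.\ \emph{after} complete reducibility and unitarizability, and is stated for coefficients of unitary representations; invoking it at this stage is at best a heavier import than necessary and at worst circular.

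The genuine gap is in the complete reducibility half. It is simply false that for a block upper-triangular unitary over a unital C*-algebra the relations $uu^{*}=u^{*}u=1$ ``immediately force the off-diagonal block to vanish'': in $M_{2}(B(\ell^{2}))$ the matrix $\left(\begin{smallmatrix}S & e\\ 0 & S^{*}\end{smallmatrix}\right)$, with $S$ the unilateral shift and $e$ the rank-one projection onto the first basis vector, is unitary and genuinely upper triangular. What the relations give you is only $a^{*}a=1$, $dd^{*}=1$ and $bb^{*}=1-aa^{*}$, so $b$ exactly measures the failure of the corner $a=(u_{ij})_{i,j\in V}$ to be a co-isometry, and nothing in C*-algebra alone forces an isometry to be unitary. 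To conclude you must use the quantum group structure: because $V$ is invariant, the corner still satisfies $\D(a_{ij})=\sum_{k\in V}a_{ik}\otimes a_{kj}$, so the antipode identity $\sum_{k}a_{ik}S(a_{kj})=\varepsilon(a_{ij})1$ together with $S(u_{ij})=u_{ji}^{*}$ yields $aa^{*}=1$ and hence $b=0$; alternatively, one averages with the Haar state to show that the projection onto an invariant subspace can be replaced by a projection lying in the $*$-algebra $\Mor_{\G}(u,u)$. Either route is where the theorem stops being linear algebra, and it is precisely the step your sketch waves through.
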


\subsection{Actions}

The central notion in the present work is that of an action of a compact quantum group on a C*-algebra, which we now introduce.
 
\begin{de}\label{def:action}
A {\em continuous action of a compact quantum group} $\G = (C(\G), \D)$ on a unital C*-algebra $B$ is a unital $*$-homomorphism $\alpha : B \to B \otimes C(\G)$ satisfying the following two conditions:
\begin{enumerate}
\item\label{cond:coaction} $(\alpha\otimes \id)\circ\alpha = (\id\otimes \D)\circ\alpha$;
\item\label{cond:density} $\overline{\spa}\{(1\otimes C(\G) )\alpha(B)\} = B \otimes C(\G)$.
\end{enumerate}
The corresponding {\em fixed point subalgebra} is the C*-subalgebra
\begin{equation*}
B^{\alpha} = \{b\in B \mid \alpha(b) = b\otimes 1\}
\end{equation*}
and the action $\alpha$ is called {\em ergodic} if $B^{\alpha} = \C 1$.
\end{de} 

The basic example of an action is given by the map $\delta_{u}$ introduced above, for a finite-dimensional representation $u$. Because we are only interested in continuous actions, we will drop the word ``continuous'' from now on. The reader may refer to \cite{DC17} for a comprehensive treatment of actions of compact quantum groups and proofs of the results used hereafter. One crucial feature of actions is that they can be decomposed using representations. More precisely, for a finite-dimensional representation $u$ of $\G$ of dimension $n$, let us set
\begin{equation*}
\Mor_{\G}(u, \alpha) = \left\{T : \C^{n}\to B \mid \alpha\circ T = (T\otimes\id)\circ\delta_{u}\right\}.
\end{equation*}
Then, $B_{u} = \spa\{T(\C^{n}) \mid T\in \Mor_{\G}(u, \alpha)\}$ is the largest subspace of $B$ on which the action coincides with a sum of representations equivalent to $u$. It is called the \emph{spectral subspace} associated with $u$. In particular, if $\Irr(\G)$ is a set of representatives of the equivalence classes of irreducible representations of $\G$, then the direct sum
\begin{equation*}
\mathcal{B} = \bigoplus_{\rho\in \Irr(\G)}B_{\rho}
\end{equation*}
is a dense $*$-subalgebra of $B$, which moreover satisfies $\alpha(\mathcal{B})\subset \mathcal{B}\otimes C(\G)$. The set of those $\rho$ such that $B_{\rho}\neq \{0\}$ will be called the \emph{spectrum} of the action and will be denoted by $\mathrm{Sp}(B,\alpha)$. We will frequently use the fact that $\dim B_\rho = k \dim \rho$ with some $k\leq \dim \rho$.

Assume that there is a C*-subalgebra $C(\HH)\subset C(\G)$ such that $(C(\HH), \Delta_{\mid C(\HH)})$ is a compact quantum group. Then, $\HH$ is called a \emph{quotient} of $\G$. If $\alpha$ is an action of $\G$ on a C*-algebra $B$ such that $\alpha(B)\subset B\otimes C(\HH)$, then the action is said to be \emph{induced from $\HH$}. If we want to exclude that case, we will call an action \emph{faithful} if it is not induced from any proper quotient.

\subsection{Quantum permutation groups}

We will be mainly interested in what follows by a specific example which was introduced by S.Z. Wang in \cite{wang1995free}.

\begin{de}
For $N\in \N$, we let $C(S_{N}^{+})$ denote the universal unital C*-algebra generated by $N^{2}$ elements $(p_{ij})_{1\leqslant i, j\leqslant N}$ satisfying the following relations:
\begin{itemize}
\item $p_{ij}^{2} = p_{ij} = p_{ij}^{*}$ for all $1\leqslant i, j\leqslant N$;
\item $\displaystyle\sum_{k=1}^{N}p_{ik} = 1 = \sum_{k=1}^{N}p_{kj}$ for all $1\leqslant i, j\leqslant N$.
\end{itemize}
\end{de}

It follows from the universal property that there exists a $*$-homomorphism $\D : C(S_{N}^{+})\to C(S_{N}^{+})\otimes C(S_{N}^{+})$ which is uniquely determined by the fact that
\begin{equation*}
\D(p_{ij}) = \sum_{k=1}^{N}p_{ik}\otimes p_{kj}.
\end{equation*}
As is shown in \cite{wang1998quantum}, this yields a compact quantum group called the \emph{quantum permutation group on $N$ points} and denoted by $S_{N}^{+}$. Moreover, the matrix $\rho = (p_{ij})_{1\leqslant i, j\leqslant N}$ is by construction a representation, called the {\em fundamental representation of $S_{N}^{+}$}. The complete representation theory of $S_{N}^{+}$ was computed by T. Banica in \cite{banica1999symmetries}.

\begin{thm}[Banica]\label{thm:fusionrulessn+}
For $N\geqslant 4$, the irreducible representations of $S_{N}^{+}$ can by indexed by the non-negative integers in such a way that $\rho_{0}$ is the trivial representation, $\rho = \rho_{0}\oplus \rho_{1}$ and for any $n\geqslant 1$,
\begin{equation*}
\rho_{1}\XBox\rho_{n} = \rho_{n-1}\oplus \rho_{n}\oplus\rho_{n+1}.
\end{equation*}
\end{thm}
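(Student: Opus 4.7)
The plan is to combine Woronowicz's Tannaka-Krein reconstruction theorem with a character-and-orthogonality computation, in the spirit of Banica's original argument for easy quantum groups. The first and hardest step is to compute the morphism spaces between tensor powers of the fundamental representation $\rho$. The defining relations of $C(S_{N}^{+})$ --- idempotency, selfadjointness and the row/column sum conditions on the $p_{ij}$ --- encode precisely the fact that a specific family of linear maps $T_{p}:(\C^{N})^{\otimes k}\to(\C^{N})^{\otimes l}$, indexed by non-crossing partitions $p$ of $\{1,\ldots,k+l\}$, lie in $\Mor_{\G}(\rho^{\otimes k},\rho^{\otimes l})$. Conversely, Tannaka-Krein duality shows that these maps span the whole intertwiner space:
\begin{equation*}
\Mor_{\G}(\rho^{\otimes k},\rho^{\otimes l})=\spa\bigl\{T_{p}:p\in NC(k,l)\bigr\}.
\end{equation*}
A Gram matrix calculation then yields that for $N\geqslant 4$ these $T_{p}$ are linearly independent, so that $\dim\Mor_{\G}(\rho^{\otimes k},\rho^{\otimes l})=|NC(k,l)|$ and in particular $\dim\mathrm{End}(\rho^{\otimes n})=C_{n}$, the $n$-th Catalan number.

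From this, one reads off the moments of the character $\chi=\chi_{\rho}$ with respect to the Haar state $h$ as $h(\chi^{n})=|NC(n)|=C_{n}$, so that $\chi$ follows the free Poisson (Marchenko--Pastur) distribution of parameter $1$. To produce the irreducibles, I would then define elements $\chi_{n}\in C(S_{N}^{+})$ by $\chi_{0}=1$, $\chi_{1}=\chi-1$, and the recursion
\begin{equation*}
\chi_{n+1}=\chi_{1}\chi_{n}-\chi_{n}-\chi_{n-1}.
\end{equation*}
Using the Catalan moment formula, a direct combinatorial argument gives $h(\chi_{n}\chi_{m})=\delta_{n,m}$. An induction on $n$ then shows that each $\chi_{n}$ is the character of an honest subrepresentation $\rho_{n}$ of $\rho^{\otimes n}$, which must be irreducible (since $h(\chi_{n}^{2})=1$) and pairwise non-equivalent (since $h(\chi_{n}\chi_{m})=0$ for $n\neq m$). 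Because $\rho$ is a generating representation of $S_{N}^{+}$, every irreducible is a subrepresentation of some $\rho^{\otimes n}$, and decomposing $\rho^{\otimes n}$ in the orthonormal system $\{\chi_{k}\}_{k\leqslant n}$ rules out any further irreducibles. The advertised fusion rule $\rho_{1}\XBox\rho_{n}=\rho_{n-1}\oplus\rho_{n}\oplus\rho_{n+1}$ is then simply the defining recursion, read at the level of representations.

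The technical heart of the proof is the linear independence of the $T_{p}$ for $N\geqslant 4$. This is a Gram determinant computation: the Gram matrix of the $T_{p}$, in the natural Hilbert--Schmidt inner product on $(\C^{N})^{\otimes(k+l)}$, has entries of the form $N^{|p\vee q|}$, where $p\vee q$ denotes the join in the non-crossing partition lattice, and one must show that its determinant is nonzero when $N\geqslant 4$. Without this bound the representation category genuinely degenerates (for $N\leqslant 3$ one essentially recovers the category of the classical group $S_{N}$), so the hypothesis $N\geqslant 4$ enters precisely at this point. Once this combinatorial fact is settled, the remainder is a formal orthogonality bookkeeping driven by the Catalan recurrence $C_{n+1}=\sum_{k=0}^{n}C_{k}C_{n-k}$.
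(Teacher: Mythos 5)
The paper does not prove this statement: it is quoted as Banica's theorem with a citation to \cite{banica1999symmetries}, so there is no in-paper argument to compare against. Your outline is the standard proof and is correct: Tannaka--Krein duality identifies $\Mor_{\G}(\rho^{\otimes k},\rho^{\otimes l})$ with the span of the maps $T_{p}$ indexed by non-crossing partitions, the Gram determinant (whose critical values are $4\cos^{2}(\pi/m)$, i.e.\ exactly $N=1,2,3$) gives linear independence for $N\geqslant 4$, and the recursively defined virtual characters $\chi_{n}$ are shown to be orthonormal for the Haar state and hence characters of pairwise inequivalent irreducibles exhausting the spectrum of the tensor powers of $\rho$. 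The only caveat is that the two genuinely technical points --- the nonvanishing of the Gram determinant for $N\geqslant 4$ and the verification of $h(\chi_{n}\chi_{m})=\delta_{n,m}$ from the Catalan moments --- are asserted rather than carried out; both are standard, and your identification of where the hypothesis $N\geqslant 4$ enters is exactly right.
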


Using the universal property, it is easy to see that there is an action of $S_{N}^{+}$ on $\C^{N}$, called {\em the standard action of $S_{N}^{+}$}, given by
\begin{equation*}
\alpha_{N}(e_{i}) = \sum_{j=1}^{N}e_{j}\otimes p_{ji},
\end{equation*}
where $(e_{i})_{1\leqslant i\leqslant N}$ is the canonical basis of $\C^{N}$. A simple calculation shows that this is an ergodic action. This can be related to the action of the classical permutation group $S_{N}$ on $N$ points. Indeed, there is a surjective $*$-homomorphism $\pi_{\text{ab}} : C(S_{N}^{+})\to C(S_{N})$ sending $p_{ij}$ to the function $\sigma\mapsto \delta_{\sigma(j)i}$. This enables to ``restrict'' the action $\alpha_{N}$ to the ``quantum subgroup'' $S_{N}$ via the map
\begin{equation*}
(\id\otimes \pi_{\text{ab}})\circ\alpha_{N} : \C^{N}\to \C^{N}\otimes C(S_{N}).
\end{equation*}
Since
\begin{align*}
((\id\otimes \pi_{\text{ab}})\circ\alpha_{N}(e_{i}))(\sigma) & = \sum_{j=1}^{N}e_{j}\otimes \delta_{\sigma(j)i} = e_{\sigma^{-1}(i)},
\end{align*}
for all $1 \leqslant i \leqslant N$, the latter map corresponds to the standard action of $S_{N}$ on $N$ points. Let us conclude by mentionning that the restriction $N\geqslant 4$ in Theorem~\ref{thm:fusionrulessn+} is necessary because for $1 \leqslant N \leqslant 3$, the map $\pi_{\text{ab}}$ is injective, hence $S_{N}^{+} = S_{N}$ in that case.

\section{Quantum permutations: the ergodic case}\label{sec:ergodic}

In this section, we will classify ergodic actions of the quantum permutation group $S_{N}^{+}$ on compact Hausdorff spaces. Let us therefore fix once and for all such a space $X$ and an action $\alpha : C(X)\to C(X)\otimes C(S_{N}^{+})$. Our strategy will be to reduce the problem to finite-dimensional actions with a specific spectrum, which we first study separately.

\subsection{Actions with small spectrum}

The purpose of this subsection is to classify ergodic actions of $S_{N}^{+}$ on finite spaces $X$ when the spectrum of the action is $\{0, 1\}$. The reason for that is the following slight extension of an argument used in the proof of \cite[Thm 6.5]{freslon2021tannaka}. Hereafter, a C*-subalgebra $B\subset C(X)$ is said to be \emph{stable under the action} if $\alpha(B)\subset B\otimes C(S_{N}^{+})$, and the corresponding action on $B$ is said to be \emph{restricted} from the original one.

\begin{lem}\label{lem:finitesubaction}
With the previous notations, there is a finite quotient $Y$ of $X$ such that $C(Y)\subset C(X)$ is stable under the action ands the restricted action has spectrum $\{0, 1\}$.
\end{lem}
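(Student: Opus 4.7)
My plan is to produce $Y$ as the Gelfand spectrum of a finite-dimensional invariant subalgebra built from spectral projections of an intertwiner of the fundamental representation of $S_{N}^{+}$. If $\rho_{1}\notin\mathrm{Sp}(C(X),\alpha)$, then the fusion rules of Theorem~\ref{thm:fusionrulessn+} together with ergodicity force $C(X)=\C 1$, and any finite subset works. Otherwise I pick a non-zero intertwiner $T\in\Mor_{S_{N}^{+}}(\rho,\alpha)$ whose $\rho_{1}$-component is non-zero, where $\rho=\rho_{0}\oplus\rho_{1}$ is the fundamental representation, and set $f_{i}=T(e_{i})\in C(X)$. Up to averaging with the intertwiner sending $e_{i}$ to $f_{i}^{*}$ (made possible by self-conjugacy of the magic unitary), the $f_{i}$ may be taken self-adjoint, so that
\begin{equation*}
\alpha(f_{i})=\sum_{j=1}^{N}f_{j}\otimes p_{ji}.
\end{equation*}

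The core algebraic step is as follows: since $\sum_{j}p_{ji}=1$ is a sum of projections in the C*-algebra $C(S_{N}^{+})$, these projections are automatically pairwise orthogonal, $p_{ji}p_{ki}=\delta_{jk}p_{ji}$. A short induction then gives
\begin{equation*}
\alpha(f_{i}^{n})=\alpha(f_{i})^{n}=\sum_{j}f_{j}^{n}\otimes p_{ji},\qquad n\geqslant 0,
\end{equation*}
so every power $f_{i}^{n}$ defines an intertwiner of $\rho$ and therefore lies in the spectral subspace $C(X)_{\rho}$. Because $\alpha$ is ergodic this subspace is finite-dimensional, so the sequence $(f_{i}^{n})_{n\geqslant 0}$ is linearly dependent, forcing $f_{i}$ to satisfy a polynomial identity and hence to have a finite spectrum $\Lambda_{i}\subset\R$. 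Composing the defining relation with the characters $\mathrm{ev}_{\sigma}\circ\pi_{\text{ab}}$ of $C(S_{N}^{+})$ recovers the classical $S_{N}$-action permuting the $f_{i}$, so all $\Lambda_{i}$ coincide with a common finite set $\Lambda$.

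The identity $\alpha(g(f_{i}))=\sum_{j}g(f_{j})\otimes p_{ji}$ is inherited by continuous functional calculus, so each spectral projection $e_{i}^{\lambda}=\mathbf{1}_{\{\lambda\}}(f_{i})$, $\lambda\in\Lambda$, satisfies the same covariance relation. The finite family of commuting projections $\{e_{i}^{\lambda}\}_{i,\lambda}\subset C(X)$ therefore generates a finite-dimensional $\alpha$-invariant commutative unital $*$-subalgebra $A\subset C(X)$. By Gelfand duality, $A=C(Y)$ for a finite set $Y$, which under the correspondence with Huang's invariant subsets gives a finite $\alpha$-stable subset of $X$; and since $A$ is generated by matrix coefficients of $\rho=\rho_{0}\oplus\rho_{1}$, the spectrum of the restricted action is contained in $\{\rho_{0},\rho_{1}\}$ (and contains $\rho_{1}$ as soon as the $\rho_{1}$-component of $T$ was non-zero).

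The main obstacle is the algebraic step leading to the identity $\alpha(f_{i}^{n})=\sum_{j}f_{j}^{n}\otimes p_{ji}$: one must recognise the magic-unitary orthogonality as precisely the relation needed to collapse the multiple sum, after which the finite-dimensionality of $C(X)_{\rho}$ does all the work. Everything beyond this, including the passage from polynomial to continuous functional calculus and the Gelfand-theoretic identification of $Y$, is routine bookkeeping.
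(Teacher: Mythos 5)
Your route to a finite‑dimensional invariant subalgebra is genuinely different from the paper's and the middle of it is correct and rather nice: the columns of the magic unitary consist of pairwise orthogonal projections, so $\alpha(f_i^n)=\sum_j f_j^n\otimes p_{ji}$, all powers of $f_i$ stay in the finite‑dimensional space $B_0\oplus B_1$ (finite‑dimensional by ergodicity), hence each $f_i$ has finite spectrum and its spectral projections generate a finite‑dimensional invariant commutative subalgebra $A$. But the last step fails, and it is exactly where the content of the lemma lies. From the fact that the \emph{generators} $e_i^{\lambda}$ of $A$ lie in $B_0\oplus B_1$ you cannot conclude that $A\subset B_0\oplus B_1$: by the fusion rules, $B_1\cdot B_1\subset B_0\oplus B_1\oplus B_2$ a priori, and more generally a product of $k$ such generators transforms under a subrepresentation of $\rho^{\otimes k}$, which contains every $\rho_m$ with $m\leqslant k$. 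So the finite action you build is only known to have spectrum contained in $\{0,1,2,\dots\}$, and the assertion that it equals $\{0,1\}$ is unproved (and is precisely the conclusion you are after). The paper's proof instead imports the non‑trivial multiplication rule $B_{n_0}\cdot B_{n_0}\subset B_0\oplus B_1$ from \cite[Thm 6.5]{freslon2021tannaka}, which is proved using the commutativity of $C(X)$ together with explicit computations in the representation category; that rule is what makes $B_0\oplus B_1$ itself a subalgebra with the right spectrum. Your argument never invokes anything of this kind for the spectral claim, which is a sign the step is missing rather than routine.

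There is a second, related gap at the very start: the assertion that $\rho_1\notin\mathrm{Sp}(C(X),\alpha)$ together with ergodicity and the fusion rules forces $C(X)=\C 1$ is not a formal consequence of those hypotheses. The paper derives it from the same product rule: if $n_0=\min\{n\geqslant 1\mid B_n\neq\{0\}\}$ were $>1$, then $B_{n_0}\cdot B_{n_0}\subset B_0\oplus B_1=B_0$ would make $B_0\oplus B_{n_0}$ a finite‑dimensional C*-algebra in which every product is scalar, contradicting the existence of a non‑trivial projection. As written, your proposal neither proves this nor cites it, so the case $\rho_1\notin\mathrm{Sp}(C(X),\alpha)$ is not handled. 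In short: your construction is a valid alternative way to produce a finite invariant quotient, but the two places where the spectrum must actually be pinned down to $\{0,1\}$ both still require the commutativity‑based product estimate that the paper's proof is built on.
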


\begin{proof}
Set $B = C(X)$ and let $B_{n}$ be the spectral subspace corresponding to the $n$-th irreducible representation of $S_{N}^{+}$. Assume that the action is non-trivial so that $n_{0} = \min\{n\geqslant 1 \mid B_{n}\neq \{0\}\}$ is well defined. It was shown in the proof of \cite[Thm 6.5]{freslon2021tannaka} that
\begin{equation*}
B_{n_{0}} \cdot B_{n_{0}}\subset B_{0} \oplus B_{1}.
\end{equation*}
If $n_{0} > 1$, then $B_{1} = \{0\}$ so that $A = B_{0}\oplus B_{n_{0}}$ is a finite-dimensional C*-algebra (the stability under the $*$-operation comes from the fact that the irreducible representations of $S_{N}^{+}$ are self-conjugate) such that $x\cdot y$ is scalar for all $x, y\in A$. Considering a non-trivial projection then yields a contradiction, hence we conclude that $n_{0} = 1$. But then, $B_{0}\oplus B_{1}$ is a C*-subalgebra which is stable under the action by definition, hence the result.
\end{proof}

Our purpose is to describe more precisely the actions appearing above. The only unkown parameter is the multiplicity of the representation $\rho_{1}$, which we will denote by $m$. For $m = 1$, the space acted upon has $N$ points, and this yields the standard action of $S_{N}^{+}$.

\begin{lem}\label{lem:standard}
Let $N\geqslant 4$. Then, any ergodic action of $S_{N}^{+}$ on $\C^{N}$ is isomorphic to the standard one.
\end{lem}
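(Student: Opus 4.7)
Writing $\alpha(f_i) = \sum_j f_j \otimes q_{ji}$, where $(f_i)_{1\leqslant i\leqslant N}$ are the minimal projections of $\C^N$, the coaction and $*$-homomorphism axioms for $\alpha$ yield that each $q_{ji}$ is a self-adjoint projection with $q_{ji}q_{jk} = \delta_{ik}q_{ji}$, $\sum_i q_{ji} = 1$, and $\D(q_{ji}) = \sum_k q_{jk}\otimes q_{ki}$; in particular $Q = (q_{ji})$ is an $N$-dimensional representation of $S_N^+$. The plan is to produce a permutation $\tau\in S_N$ with $q_{ji} = p_{\tau^{-1}(j),\tau^{-1}(i)}$ for all $i,j$, which then yields directly the $*$-isomorphism $f_i\mapsto f_{\tau^{-1}(i)}$ intertwining $\alpha$ with the standard action $\alpha_N$.

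\textbf{Step 1: identifying $Q$ as a representation.} Ergodicity gives $\dim B_0 = 1$, hence $\sum_{n\geqslant 1} m_n\dim(\rho_n) = N-1$, where $m_n$ is the multiplicity of $\rho_n$. The fusion rules of Theorem~\ref{thm:fusionrulessn+} imply $\dim(\rho_{n+1}) = (N-2)\dim(\rho_n) - \dim(\rho_{n-1})$ with $\dim(\rho_0)=1$ and $\dim(\rho_1)=N-1$, so $\dim(\rho_2) = N^2-3N+1 > N-1$ for $N\geqslant 4$ and $(\dim(\rho_n))_{n\geqslant 1}$ is strictly increasing. Hence $m_n=0$ for $n\geqslant 2$ and $m_1=1$, so $Q$ is equivalent to $\rho_0\oplus\rho_1$, which is the fundamental representation $P = (p_{ji})$. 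Pick $V\in GL_N(\C)$ with $Q = VPV^{-1}$.

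\textbf{Step 2: the classical subgroup pins $V$ down.} Composing $\alpha$ with $\id\otimes\pi_{\text{ab}}$ yields a continuous action of the classical symmetric group $S_N$ on the commutative algebra $\C^N$, which must come from a set-theoretic action on the $N$-point spectrum. Hence, for every $\sigma\in S_N$, the complex matrix $Q(\sigma) = V\sigma V^{-1}$ is the permutation matrix of a bijection, so $V$ normalizes the image of $S_N$ in $GL_N(\C)$ and $\psi(\sigma) := V\sigma V^{-1}$ defines an automorphism of $S_N$. Then $V$ intertwines the $N$-dimensional permutation representation $\sigma\mapsto\sigma$ with its twist $\sigma\mapsto\psi(\sigma)$; equivalence of these two representations forces $\psi$ to preserve the character $\chi(\sigma) = |\mathrm{Fix}(\sigma)|$. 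All inner automorphisms do; the only non-inner candidate, the outer automorphism of $S_6$, sends a transposition ($4$ fixed points) to a product of three disjoint transpositions ($0$ fixed points) and is thereby excluded. Thus $\psi(\sigma) = \tau\sigma\tau^{-1}$ for some $\tau\in S_N$, and $\tau^{-1}V$ commutes with all $\sigma\in S_N$. By Schur, since $\C^N = \C\cdot\mathbf{1}\oplus\{v:\sum_i v_i = 0\}$ is the decomposition into $S_N$-isotypic components (irreducible of multiplicity one), $\tau^{-1}V$ is block-diagonal in this basis; but this is also the $S_N^+$-decomposition $\rho_0\oplus\rho_1 = P$, so $\tau^{-1}V$ commutes with $P$ as well. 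Therefore $Q = VPV^{-1} = \tau P\tau^{-1}$, that is, $q_{ji} = p_{\tau^{-1}(j),\tau^{-1}(i)}$, as required.

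\textbf{Expected main obstacle.} The subtlety is that the abstract representation-theoretic equivalence $Q\sim P$ only determines $V$ modulo the two-dimensional commutant of $\rho_0\oplus\rho_1$, so the classical restriction is indispensable to extract an actual permutation $\tau$. Ruling out the exotic automorphism of $S_6$ is the main potential concern in the normalizer computation, but the fixed-point character of the standard permutation representation disposes of it cleanly.
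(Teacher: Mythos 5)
Your proof is correct, and after the first step it takes a genuinely different route from the paper's. The dimension count showing $Q\sim\rho_{0}\oplus\rho_{1}$ is common to both arguments. From there, the paper notes that the $q_{ij}$ generate a C*-subalgebra $A\subset C(S_{N}^{+})$ with $\Delta(A)\subset A\otimes A$, invokes the classification of such subalgebras (only $\C$ and $C(S_{N}^{+})$ occur) to get a surjection $\pi : C(S_{N}^{+})\to C(S_{N}^{+})$, $p_{ij}\mapsto q_{ij}$, and then upgrades $\pi$ to a quantum group automorphism by an induction on the fusion rules together with the faithfulness of the Haar state on the dense Hopf $*$-algebra; the action is then the standard one twisted by that automorphism. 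You avoid both the dual-quantum-subgroup classification and the Haar-state argument, and instead pin down the intertwiner $V$ by restricting along $\pi_{\text{ab}}$ to the classical subgroup $S_{N}$: the induced map $\psi(\sigma)=V\sigma V^{-1}$ is an automorphism of $S_{N}$ preserving the fixed-point character, which kills the outer automorphism of $S_{6}$, and Schur's lemma places $\tau^{-1}V$ in the two-dimensional commutant $\C I_{\C\mathbf{1}}\oplus\C I_{\mathbf{1}^{\perp}}$, which coincides with $\Mor_{S_{N}^{+}}(\rho,\rho)$. What your route buys is an explicit permutation $\tau$ with $q_{ji}=p_{\tau^{-1}(j),\tau^{-1}(i)}$, hence a genuine equivariant $*$-isomorphism of $\C^{N}$ given by relabelling the points, rather than an isomorphism up to an automorphism of the quantum group; what it costs is some classical group theory that the paper's more operator-algebraic argument does not need. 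Both proofs are sound.
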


\begin{proof}
This is well-known, but we include a proof for the sake of completeness. By definition, there are elements $(q_{ij})_{1\leqslant i, j\leqslant N}$ in $C(S_N^+ )$ such that
\begin{equation*}
\alpha(e_{i}) = \sum_{j=1}^{N}e_{j}\otimes q_{ji}.
\end{equation*}
and it is proven in \cite[Thm 3.1, 2)]{wang1998quantum} that the elements $(q_{ij})_{1\leqslant i, j\leqslant N}$ satisfy the defining relations of $C(S_{N}^{+})$. Moreover, the equality $(\alpha\otimes\id)\circ\alpha = (\id\otimes \Delta)\circ\alpha$ implies that $\Delta(q_{ij}) = \sum q_{ik}\otimes q_{kj}$, so that the C*-subalgebra $A$ generated by these elements satisfy $\D(A)\subset A\otimes A$. By \cite{vergnioux2004k} such C*-subalgebras correspond to subrings of the fusion ring, and for $S_{N}^{+}$ an inspection of the representation theory easily shows that the only possibilities are $\C$ and $C(S_{N}^{+})$. In the first case, the action is trivial hence not ergodic. In the second case, this means that the $*$-homomorphism $\pi : C(S_{N}^{+})\to C(S_{N}^{+})$ sending $p_{ij}$ to $q_{ij}$, which exists by the universal property of $C(S_{N}^{+})$, is onto.

We claim that $\pi$, which satisfies $\Delta\circ\pi = (\pi\otimes\pi)\circ\Delta$, is an isomorphism. To see this, observe that $\rho' = (q_{ij})_{1\leqslant i, j\leqslant N}$ is an $N$-dimensional representation of $S_{N}^{+}$ which is not the sum of $N$ copies of the trivial one. For $n\geqslant 2$, the dimension of $\rho_{n}$ is greater than $(N-1)^{2} - N$ which is itself strictly greater than $N$ if the latter is at least $4$. Thus, the only possibility is that $\rho'$ is equivalent to the fundamental representation $\rho = \rho_{0}\oplus \rho_{1}$. It then follows by induction that $\pi(\rho_{n})$ is irreducible and equivalent to $\rho_{n}$. As a consequence, if $h$ denotes the Haar state of $S_{N}^{+}$, we have $h\circ\pi = h$. Because $h$ is faithful on the canonical dense Hopf $*$-algebra $\O(S_{N}^{+})$, it follows that there exists a $*$-homomorphism $\psi : \O(S_{N}^{+})\to \O(S_{N}^{+})\subset C(S_{N}^{+})$ which is inverse to $\pi_{\mid \O(S_{N}^{+})}$. By definition of the universal C*-algebra, $\psi$ extends to $C(S_{N}^{+})$ and is still an inverse for $\pi$ since it is so on a dense subalgebra. As a conclusion, $\pi$ is $*$-isomorphism which is by construction equivariant if the left-hand side acts standardly, hence the result. 
\end{proof}

\begin{rem}\label{rem:Natleast4}
The proof breaks down for $N\leqslant 3$ because the representation theory of $S_{N}^{+}$ is then different, since $S_{N}^{+} = S_{N}$ in that case. One may nevertheless observe that the statement obviously holds for $S_{2}$ and also for $S_{3}$ (a transitive action on three points has a kernel of cardinality $1$ or $2$ and since $2$ is not possible it must be faithful, hence given by an automorphism of $S_{3}$ which is necessarily inner).
\end{rem}

We will now show that this is the only possibility (together with the trivial action of course, corresponding to $m = 0$). The proof will involve the action of $S_{N}$ on specific finite sets, and we therefore first give some results about these. For the sake of clarity, we will split the results concerning $S_{N}$ into two statements, depending on the value of $N$.

\begin{prop}\label{prop:finiteactionclassical}
Let $N\geqslant 8$ be an integer and let $X$ be a set of cardinality $1 + m(N-1)$ for some integer $1\leqslant m\leqslant N-1$. If $S_{N}$ acts transitively on $X$, then $m = 1$.
\end{prop}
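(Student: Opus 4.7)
The plan is to recast transitivity as a subgroup-index problem and rule out $m\geqslant 2$ via the orbit structure of the point stabilizer $S_{N-1}\leqslant S_N$ acting on $X$.

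First, writing $H=\Stab_{S_N}(x_0)$ for some $x_0\in X$, we have $|X|=[S_N:H]$. The coset action $S_N\to S_{|X|}$ must be faithful, since its kernel is a normal subgroup of $S_N$ (hence in $\{e,A_N,S_N\}$) and the two non-trivial options would force $|X|\leqslant 2$, contradicting $|X|\geqslant N\geqslant 8$.

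Next, I would decompose $X$ into orbits under $S_{N-1}\leqslant S_N$, the stabilizer of some $i_0\in\{1,\ldots,N\}$. Each orbit size equals the index of a subgroup of $S_{N-1}$. Since $N-1\geqslant 7$ (and in particular $\neq 6$), the classical result that the proper subgroups of $S_n$ of index at most $n$ are $A_n$ (index $2$) and the point stabilizers (index $n$) implies that all orbit sizes lie in $\{1,2,N-1\}$ or are at least $2(N-1)$. If all orbits had size $\leqslant 2$, the $3$-cycles in $A_{N-1}$ would act trivially on each, so $A_{N-1}$ would be in the kernel of $S_{N-1}\to S_X$; its normal closure $A_N$ in $S_N$ would then be in the kernel of $S_N\to S_X$, contradicting faithfulness. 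Hence at least one $S_{N-1}$-orbit has size $\geqslant N-1$.

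The key numerical step is a double-counting: for $y\in X$ with $S_{N-1}$-orbit of size $d$, setting $H_y=\Stab_{S_N}(y)$ and $t_y:=[H_y:H_y\cap S_{N-1}]\in\{1,\ldots,N\}$ (the size of the $H_y$-orbit of $i_0$), both $|X|\cdot t_y$ and $Nd$ equal $[S_N:H_y\cap S_{N-1}]$, so $|X|\cdot t_y=Nd$. Reducing modulo $N-1$ with $N\equiv 1$ and $|X|\equiv 1\pmod{N-1}$ gives $t_y\equiv d\pmod{N-1}$. Combined with $t_y\in\{1,\ldots,N\}$, this determines $t_y$ from $d$ (uniquely, except when $d\equiv 1$ in which case $t_y\in\{1,N\}$). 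When $d=N-1$, this forces $t_y=N-1$, yielding $|X|=Nd/t_y=N$ and thus $m=1$.

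The main obstacle is the remaining case $d\geqslant 2(N-1)$. I would handle it by using $\gcd(|X|,N-1)=1$ (which follows from $|X|\equiv 1\pmod{N-1}$), forcing $|X|$ to divide the $(N-1)$-coprime part of $Nd$. Combined with $d\leqslant|X|\leqslant(N-1)^2+1$, a direct check over the admissible subgroup indices $d$ of $S_{N-1}$ in $[2(N-1),(N-1)^2+1]$ should show that every resulting $|X|=Nd/t_y$ either violates $|X|\equiv 1\pmod{N-1}$, exceeds $(N-1)^2+1$, or is strictly less than $d$, so that no value of $m\in\{2,\ldots,N-1\}$ arises. This completes the proof with the conclusion $m=1$.
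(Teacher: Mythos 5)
Your reduction to the stabilizer index, the faithfulness of the coset action, the orbit decomposition under $S_{N-1}$, and the double-counting identity $|X|\cdot t_{y}=Nd$ with the congruence $t_{y}\equiv d \pmod{N-1}$ are all correct, and the case $d=N-1$ is handled cleanly (it does force $t_{y}=N-1$ and $|X|=N$). This is a genuinely different route from the paper, which applies the Dixon--Mortimer classification of low-index subgroups directly to $\Stab_{S_N}(x_0)$, whose index $1+m(N-1)\leqslant 1+(N-1)^{2}$ is below $\binom{N}{3}$ for $N\geqslant 8$, and then rules out each admissible index by arithmetic.

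However, there is a genuine gap at exactly the point you flag as ``the main obstacle'': the case where every $S_{N-1}$-orbit of size $\geqslant N-1$ in fact has size $d\geqslant 2(N-1)$. You assert that ``a direct check over the admissible subgroup indices $d$ of $S_{N-1}$ in $[2(N-1),(N-1)^{2}+1]$ should show'' a contradiction, but you never perform it, and performing it is not elementary: enumerating those indices requires the very same classification theorem you were implicitly avoiding, now applied to $S_{N-1}$. Besides the ``obvious'' indices $2(N-1)$, $\binom{N-1}{2}$, $(N-1)(N-2)$, the range $[2(N-1),(N-1)^{2}+1]$ contains, for small $N$, indices of exceptional transitive subgroups --- e.g.\ for $N=9$ one has $AGL(3,2)<S_{8}$ of index $30$ and $S_{4}\wr S_{2}<S_{8}$ of index $35$, and for $N=8$ one has $PSL(2,7)<S_{7}$ of index $30$ --- none of which your sketch accounts for. (For what it is worth, each of these does get excluded by your congruence and size constraints: e.g.\ $d=30$, $n=8$ gives $t_y=6$ and $|X|=45\not\equiv 1\pmod 8$; $d=35$ gives $|X|=105>65$.) So the strategy appears salvageable, but as written the proof is incomplete at its hardest step, and completing it costs you the same external input as the paper's argument, applied one level down, together with a case-by-case verification you have not carried out.
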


\begin{proof}
The argument will rely on the general classification of subgroups of $S_{N}$ with small index, which is given for instance in \cite[Chap 5]{dixon1996permutation}. More precisely, write $X = \{1, \cdots, 1 + m(N-1)\}$ and let $G < S_{N}$ be the stabilizer of $1$. Then, $G$ has index
\begin{equation*}
n = 1 + m(N-1) \leqslant 1 + (N-1)^{2}.
\end{equation*}
For $N\geqslant 8$, this is strictly less than $\binom{N}{3}$, hence by \cite[Thm 5.2 B]{dixon1996permutation} there are three possibilities:
\begin{enumerate}
\item $A_{(Y)} \leqslant G\leqslant S_{\{Y\}}$ for some subset $Y\subset X$ of cardinality at most $2$ (see below for the definition of the two groups appearing here in our context);
\item $N = 2M$ and $G$ has index $\frac{1}{2}\binom{N}{M}$;
\item $N = 8$ and $G$ has index $30$.
\end{enumerate}
We will investigate each possibility separately.

\begin{enumerate}
\item We may assume without loss of generality that $Y \subset \{1, 2\}$. If $Y = \{1, 2\}$, then $A_{(Y)} = \id_{\{1, 2\}}\times A_{N-2}$ and $S_{\{Y\}} = S_{2}\times S_{N-2}$. In particular, $G$ stabilizes $\{1, 2\}$ and its restriction to $\{3, \cdots, n\}$, contains $A_{N-2}$, hence equals $A_{N-2}$ or $S_{N-2}$. This leaves us with the following possibilities : $\id_{\{1, 2\}}\times A_{N-2}$, $\id_{\{1, 2\}}\times S_{N-2}$, $S_{2}\times A_{N-2}$, $S_{2}\times S_{N-2}$ and $S_{N-2}$. If $Y = \{1\}$, then $A_{N-1} \leqslant G \leqslant S_{N-1}$ which means that $G\in \{A_{N-1}, S_{N-1}\}$. Let us check that in each case the index cannot be equal to $n$ unless $m = 1$.
\begin{itemize}
\item $S_{N-1}$: $n = N$, implying $m = 1$;
\item $A_{N-1}$: $n=2$, impossible since $n \geqslant N$;
\item $\id_{\{1, 2\}}\times A_{N-2}$: $1+ m(N-1) = 2N(N-1)$ implies $m = 2N - 1/(N-1)$, which is not an integer for $N\geqslant 3$;
\item $\id_{\{1, 2\}}\times S_{N-2}$: $1 + m(N-1) = N(N-1)$ implies $m = N - 1/(N-1)$, which is not an integer for $N\geqslant 3$;
\item $S_{2}\times A_{N-2}$: $1 + m(N-1) = N(N-1)$, see above;
\item $S_{2}\times S_{N-2}$: $1 + m(N-1) = N(N-1)/2$ implies $m = N/2 - 1/(N-1)$ which is not an integer for $N\geqslant 3$;
\item $S_{N-2}$: $1 + m(N-1) = N(N-1)$, see above.
\end{itemize}
\item We should have
\begin{align*}
\frac{4^{M}}{\sqrt{4M}} & \leqslant \frac{1}{2}\binom{2M}{M} \\
& = 1 + m(2M-1) \\
& \leqslant 1 + (2M-1)^{2} \\
& \leqslant 4M^{2},
\end{align*}
but the inequality $4^{x} \leqslant 8x^{2}\sqrt{x}$ fails as soon as $x\geqslant 4$, i.e. for $N\geqslant 8$.
\item In this exceptional case, we have $1 + m(N-1) = 30$, hence $m(N-1) = 29$. Since $29$ is prime, this forces $m = 1$, contradicting $N = 8$.
\end{enumerate}
\end{proof}

We are now left with the case $N\leqslant 7$, which can be done by hand.

\begin{lem}\label{lem:smallN}
Let $N\leqslant 7$ be an integer and let $X$ be a set of cardinality $1 + m(N-1)$ for some integer $1\leqslant m\leqslant N-1$. If $S_{N}$ acts transitively on $X$, then $m = 1$.
\end{lem}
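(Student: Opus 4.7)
The plan is to reduce the lemma to a short finite case analysis. A transitive action of $S_N$ on $X$ is determined by a subgroup $G \leq S_N$ (the stabilizer of a point) of index $|X| = 1+m(N-1)$, so a first necessary condition is the divisibility $1+m(N-1) \mid N!$. The strategy is to show, for each pair $(N,m)$ with $3 \leqslant N \leqslant 7$ and $2 \leqslant m \leqslant N-1$ (the case $N=2$ being vacuous since then $m$ is forced to be $1$), that no subgroup of that index exists; this rules out $m \geqslant 2$ and forces $m=1$.

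First I would tabulate the candidate indices $n = 1+m(N-1)$ for these finitely many pairs and apply the trivial divisibility test. An elementary arithmetic check shows that $n \nmid N!$ in every case except one, namely $(N,m) = (6,3)$, which yields $n = 16$ and would correspond to a subgroup of order $720/16 = 45$ in $S_6$. For instance, for $N=7$ the candidate indices are $\{13,19,25,31,37\}$, none of which divides $5040$, and similar one-line checks handle all other pairs.

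The main obstacle is therefore the exceptional case: showing that $S_6$ has no subgroup of order $45$. I plan to dispose of this by Sylow theory. Any group of order $45 = 3^2 \cdot 5$ satisfies $n_5 \mid 9$ and $n_5 \equiv 1 \pmod 5$, forcing $n_5 = 1$, and similarly $n_3 \mid 5$ with $n_3 \equiv 1 \pmod 3$ forces $n_3 = 1$. Such a group is therefore the direct product of its Sylow subgroups and in particular contains an element of order $3$ commuting with an element of order $5$. However, $S_6$ contains $\binom{6}{5}\cdot 4! = 144$ five-cycles, so by orbit--stabilizer the centralizer of any $5$-cycle has order $720/144 = 5$ and is the cyclic group generated by that $5$-cycle itself. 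Hence no element of order $3$ centralizes a $5$-cycle in $S_6$, and no subgroup of order $45$ can exist, which completes the proof.
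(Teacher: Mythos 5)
Your proof is correct, and it follows the paper's own reduction exactly up to the last step: both arguments pass to the point stabilizer, use the divisibility $1+m(N-1)\mid N!$, and find by direct inspection that the only surviving pair is $(N,m)=(6,3)$, i.e.\ a hypothetical subgroup of index $16$ (order $45$) in $S_{6}$. Where you diverge is in killing this exceptional case. The paper again invokes the Dixon--Mortimer classification of subgroups of $S_{n}$ of index below $\binom{n}{3}$ (\cite[Thm 5.2 B]{dixon1996permutation}), the same theorem already used in Proposition \ref{prop:finiteactionclassical}, so for the authors this costs nothing extra. You instead give a self-contained elementary argument: Sylow's theorems force a group of order $45=3^{2}\cdot 5$ to have normal (hence unique) Sylow $3$- and $5$-subgroups, so it is their direct product and contains an element of order $3$ commuting with one of order $5$; but every element of order $5$ in $S_{6}$ is a $5$-cycle, whose centralizer has order $720/144=5$ and so contains no element of order $3$. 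This is a genuine improvement in self-containedness --- it makes Lemma \ref{lem:smallN} independent of the classification theorem, at the price of a slightly longer write-up; the only micro-step you leave implicit is the (trivial) observation that an order-$5$ element of $S_{6}$ must indeed be a $5$-cycle, which you should state when writing this up.
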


\begin{proof}
Note that for $N = 1, 2$ there is nothing to prove. Let $G < S_{N}$ be the stabilizer of a point. It has index $n = 1 + m(N-1)$, hence that number divides the factorial of $N$. We will therefore simply check that $n$ does not divide $N!$ if $1 < m < N$.
\begin{itemize}
\item $N = 3$: the only possibility is $n = 1 + 2\times 2 = 5$, which does not divide $6$;
\item $N = 4$: the two possibilities are $n = 1 + 2\times 3 = 7$ and $n = 1 + 3\times 3 = 10$, none of which divides $24$;
\item $N = 5$: $n\in \{9, 13, 17\}$ and these do not divide $120$;
\item $N = 6$: $n\in \{11, 16, 21, 26\}$ and among these only $16$ divides $720$, we will deal with this case below;
\item $N = 7$: $n\in \{13, 19, 25, 31, 37\}$ and these do not divide $5040$.
\end{itemize}
To conclude, we must deal with the case where $N = 6$ and $n = 16$ (hence $m = 3$). Using \cite[Thm 5.2 B]{dixon1996permutation}, we have to consider three cases. In the first one, since $16 < \binom{6}{3}$, the same reasoning as in the proof of Proposition~\ref{prop:finiteactionclassical} shows that $m = 1$. In the second case, we have $\binom{6}{3}/2 = 10\neq 16$ hence this is not possible. As for the third one, no exceptional case has index $16$ so that the proof is complete.
\end{proof}

Using this, we can elucidate the structure of finite actions of $S_{N}^{+}$. Before we give the proof, let us note an important fact. The representation $\rho = \rho_{0}\oplus\rho_{1}$ becomes, when restricted to $S_{N}$, the standard representation on $\C^{N}$. Since moreover $\rho_{0}$ becomes the only copy of the trivial representation, we conclude that $\rho_{1}$ is, when restricted to $S_{N}$, the completement of the trivial representation, and this is known to be irreducible.

\begin{prop}\label{prop:finiteaction}
Let $X$ be a finite space with an ergodic action of $S_{N}^{+}$ for $N\geqslant 4$. If the spectrum of the action is contained in $\{0, 1\}$, then either $X$ is a point with the trivial action, or $X$ has $N$ points and the action is the standard one.
\end{prop}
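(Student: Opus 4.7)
The plan is to reduce the problem to a classical transitive action of $S_N$ on a finite set and then invoke the results proved just above.

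First, by ergodicity and the hypothesis on the spectrum, one has $C(X) = B_0 \oplus B_1$ with $B_0 = \C 1$ and $B_1$ isotypic for $\rho_1$ of some multiplicity $m$. Since $\dim \rho_1 = N-1$, this gives $|X| = 1 + m(N-1)$. If $m = 0$ then $|X| = 1$ and $X$ is a point with the trivial action, so from now on we may assume $m \geqslant 1$.

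Next, I would restrict to the classical subgroup $S_N$ by setting $\alpha' = (\id \otimes \pi_{\text{ab}}) \circ \alpha$, which is a genuine action of $S_N$ on the finite set $X$. Because $B_1$ is a spectral subspace, $\alpha(B_1) \subset B_1 \otimes C(S_N^+)$, so $B_1$ is stable under $\alpha'$. The key observation, which is precisely the content of the remark preceding the statement, is that $\pi_{\text{ab}}$ sends $\rho_1$ to the standard $(N-1)$-dimensional irreducible representation of $S_N$. Consequently, $B_1$ is isotypic for this standard representation with multiplicity $m$ under $\alpha'$, and in particular contains no $S_N$-invariant vector. Hence the fixed algebra of $\alpha'$ is still $\C 1$, meaning $\alpha'$ is ergodic, i.e. $S_N$ acts transitively on $X$.

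Frobenius reciprocity applied to this transitive action $X \cong S_N/H$ then bounds the multiplicity of the standard representation in $\C[X]$ by its dimension $N-1$, so that $1 \leqslant m \leqslant N-1$. I would then invoke Proposition \ref{prop:finiteactionclassical} when $N \geqslant 8$ and Lemma \ref{lem:smallN} when $4 \leqslant N \leqslant 7$ to conclude $m = 1$. Hence $|X| = N$, so $C(X) \cong \C^N$, and Lemma \ref{lem:standard} identifies $\alpha$ with the standard action, completing the argument.

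The main conceptual step is the passage from the quantum action of $S_N^+$ to the classical action of $S_N$ together with the verification that ergodicity is preserved: this relies crucially on the remark that $\rho_1$ remains irreducible and nontrivial when restricted to $S_N$. Everything else is bookkeeping or a direct appeal to the previously established results.
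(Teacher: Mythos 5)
Your proof is correct and follows essentially the same route as the paper: restrict to the classical subgroup $S_{N}$, use irreducibility of the restrictions of $\rho_{0}$ and $\rho_{1}$ to deduce transitivity and the cardinality $1+m(N-1)$ with $m\leqslant N-1$, then apply the classical results to force $m=1$ and conclude via Lemma \ref{lem:standard}. If anything, you are slightly more careful than the paper in explicitly invoking Lemma \ref{lem:smallN} for $4\leqslant N\leqslant 7$, where Proposition \ref{prop:finiteactionclassical} alone does not apply.
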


\begin{proof}
The action is trivial if and only if its spectrum is $\{0\}$, and we will exclude that case from now on. Consider the restricted action of $S_{N}$ on $X$. Because the irreducible representations $\rho_{0}$ and $\rho_{1}$ of $S_{N}^{+}$ are still irreducible when restricted to $S_{N}$, the spectral decomposition of the restricted action of $S_{N}$ is the same as that of the action of $S_{N}^{+}$. In particular, $\rho_{0}$ has multiplicity one, hence the action is ergodic, which in that case is equivalent to being transitive. Moreover, the multiplicity of $u^{1}$ is bounded by its dimension, which is $N-1$, so that the cardinality of $X$ equals $1 + m(N-1)$, with $0\leqslant m\leqslant N-1$. If $k = 0$, then this is the trivial action and otherwise Proposition~\ref{prop:finiteactionclassical} asserts that $m = 1$. We then conclude by Lemma~\ref{lem:standard}.
\end{proof}

Before going further, let us record an extension of Proposition~\ref{prop:finiteaction} which will be useful later on.

\begin{cor}\label{cor:spectrumk}
Let $X$ be a finite space with an ergodic action of $S_{N}^{+}$ for $N\geqslant 4$. If the spectrum of the action is contained in $\{0, 1, k\}$ for some $k > 1$, then either $X$ is a point with the trivial action, or $X$ has $N$ points and the action is the standard one.
\end{cor}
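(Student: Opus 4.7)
The plan is to reduce the hypothesis to that of Proposition \ref{prop:finiteaction} by showing that the spectral subspace $B_k$ must be trivial. Writing $B = C(X) = B_0\oplus B_1\oplus B_k$, we assume $B_k\neq\{0\}$ and aim at a contradiction; the corollary then follows from the proposition applied to the spectrum $\{0,1\}$.

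First we dispose of the sub-case $B_1 = \{0\}$. Then the minimum non-trivial spectral index in the sense of Lemma \ref{lem:finitesubaction} is $n_0 = k > 1$, and the argument of that lemma yields $B_k\cdot B_k\subset B_0\oplus B_1 = B_0$. Consequently every product of two elements of the commutative finite-dimensional C*-algebra $A = B_0\oplus B_k$ is scalar, and any non-trivial projection $p\in A$ would satisfy $p = p^2\in\C\cdot 1$, contradicting its non-triviality; so $A = \C\cdot 1$ and $B_k = \{0\}$, absurd.

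We may therefore assume $B_1\neq\{0\}$, and consider the $*$-subalgebra $A\subset B$ generated by $B_k$ (the $*$-stability comes from $\rho_k$ being self-conjugate). It is $\alpha$-stable with spectrum contained in $\{0,1,k\}$, and since $B_1\simeq\rho_1$ is $S_N^+$-irreducible, $A\cap B_1$ equals either $\{0\}$ or $B_1$. If $A\cap B_1 = \{0\}$, then $A = B_0\oplus B_k$ is itself a $*$-subalgebra whose restricted ergodic action has minimum spectral index $k>1$, and applying the argument of the previous paragraph inside $A$ once more forces $B_k = \{0\}$.

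The remaining case $A = B$, in which $B$ is generated as an algebra by $B_k$, is the expected main obstacle. To handle it we exploit the commutativity of $B$: the product $B_k\otimes B_k\to B$ is symmetric and hence factors through $\mathrm{Sym}^2 B_k$. Combined with the $\mathrm{SO}(3)$-type decomposition of $\rho_k\otimes\rho_k$ into symmetric and antisymmetric parts and the multiplicity decomposition $B_k\simeq\C^{m_k}\otimes\rho_k$, one computes the $\rho_1$-multiplicity in $\mathrm{Sym}^2 B_k$ to be $\binom{m_k}{2}$. This vanishes for $m_k = 1$ and places us back in the previous case; for $m_k\geq 2$ an additional ingredient is needed. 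When $k\geq 3$, the subspace $B_0\oplus B_1$ is itself a $*$-subalgebra (the fusion rule $\rho_1\otimes\rho_1 = \rho_0\oplus\rho_1\oplus\rho_2$ combined with $B_2 = \{0\}$ forbids any higher component), so by Proposition \ref{prop:finiteaction} it realises the standard action of $S_N^+$ on $N$ points as a quotient of $X$ and endows $B$ with an equivariant $\C^N$-module structure whose fibrewise analysis should give the final contradiction. The case $k = 2$ would need an ad hoc treatment since $B_0\oplus B_1$ is no longer automatically a subalgebra.
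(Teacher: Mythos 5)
Your reduction of the degenerate cases is fine: when $B_{1}=\{0\}$, or more generally when $B_{k}\cdot B_{k}\subset\C 1$, the observation that a commutative finite-dimensional C*-algebra all of whose products are scalar must be $\C 1$ does kill $B_{k}$. But the heart of the statement is exactly the case you leave open, and the one concrete tool you propose for it does not exist: since $S_{N}^{+}$ is not cocommutative for $N\geqslant 4$, the flip on $H_{k}\otimes H_{k}$ is \emph{not} an intertwiner of $\rho_{k}\XBox\rho_{k}$, so there is no equivariant splitting of $\rho_{k}\XBox\rho_{k}$ into symmetric and antisymmetric parts and no well-defined ``$\rho_{1}$-multiplicity of $\mathrm{Sym}^{2}B_{k}$''. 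This is precisely the difficulty that Lemma \ref{lem:ergodicactionspectrumk} and \cite[Lem 6.1]{freslon2021tannaka} confront by comparing the two projections $P_{n}^{1,k}$ and $P_{n}^{k,1}$ on explicit vectors. Two smaller points: $B_{1}$ is the full spectral subspace of $\rho_{1}$ and may carry $\rho_{1}$ with multiplicity $m>1$, so the dichotomy $A\cap B_{1}\in\{\{0\},B_{1}\}$ is unjustified; and your worry about $k=2$ is unfounded, since \cite[Lem 6.1 and Prop 6.4]{freslon2021tannaka} give $B_{j}\cdot B_{j}\subset B_{0}\oplus B_{1}$ for every $j\geqslant 1$ when $C(X)$ is commutative, so $B_{0}\oplus B_{1}$ is always a subalgebra in this setting.

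The paper closes the main case by carrying out the ``fibrewise analysis'' you only gesture at. From $(B_{1}\oplus B_{k})^{2}\subset A:=B_{0}\oplus B_{1}$ and Proposition \ref{prop:finiteaction} (which identifies $A$ with $\C^{N}$ carrying the standard action), the inclusion $A\subset C(X)$ partitions $X$ into $N$ fibers $X_{1},\ldots,X_{N}$ of equal cardinality $d$, with $A$ the functions constant on each fiber. For $f\geqslant 0$ supported on one fiber, $f'=f-(\id\otimes h)\circ\alpha(f)$ lies in $B_{1}\oplus B_{k}$, hence $f'^{2}\in A$ is constant on that fiber, which forces $f$ itself to be constant there; taking for $f$ the indicator of a single point yields $d=1$, i.e.\ $B_{k}=\{0\}$. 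You would need to supply an argument of this kind, or redo the $P_{n}^{1,k}$ versus $P_{n}^{k,1}$ computation, to finish; as written the proposal does not prove the corollary.
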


\begin{proof}
It still follows from the proof of \cite[Thm 6.5]{freslon2021tannaka} that there is a decomposition
\begin{equation*}
C(X) = B_{0}\oplus B_{1}\oplus B_{k}
\end{equation*}
where $A = B_{0}\oplus B_{1} = \C^{N}$ with its standard action by Proposition~\ref{prop:finiteaction}, and $B_{k}$ is invariant under the action and satisfies $B_{k}\cdot B_{k}\subset A$. The spectrum $Y$ of $A$ consists of $N$ points and is the range of a continuous equivariant surjection $\pi : C(X)\to C(Y)$. Considering the fibres of $\pi$ yields a partition of $X$ into $N$ subsets $X_{1}, \cdots, X_{N}$ of equal cardinality $d$, and $A$ is the algebra of functions which are constant on each $X_{i}$. We now claim that this forces $B_{k} = \{0\}$.

To see why, let us first observe that there is a function $f\in B_{k}$ such that $f_{\mid X_{i}}\neq 0$ for all $1\leqslant i\leqslant N$. Indeed, we would otherwise have that any function on $X$, being the sum of an element of $B_{k}$ and an element of $A$, is constant on at least one of the components $X_{i}$, which is absurd if $d > 1$. Let $g\in B_{k}$ and $1\leqslant i\leqslant N$. Then, $f^{2}$ and $g^{2}$ are constant on $X_{i}$, hence $f$ and $g$ are constant up to a sign. Moreover, $f\cdot g$ is constant on $X_{i}$ so that $f(x)$ and $g(x)$ are either always of the same sign or always of opposite sign for $x\in X_{i}$. We therefore conclude that $g_{\mid X_{i}} = \lambda_{i} f_{\mid X_{i}}$ for some $\lambda_{i}\in \C$. As a consequence, $B_{k}$ has dimension at most $N$, since any of its elements is determined by a scalar on each $X_{i}$. But if $B_{k}\neq \{0\}$, then its dimension is at least $\dim(\rho_{2}) = N^{2} - 3N + 1> N$ for $N\geqslant 4$, hence the result.

\end{proof}

\subsection{Arbitrary ergodic actions}

We now consider an arbitrary ergodic action
\begin{equation*}
\alpha : C(X)\to C(X)\otimes C(S_{N}^{+}).
\end{equation*}
Our goal is to show that $X$ must in fact consist in either one or $N$ points. The idea is to find invariant subalgebras of $C(X)$ with spectrum $\{0, 1, k\}$ so that we can use Corollary~\ref{cor:spectrumk}. To do this, we will refine the ideas of Lemma~\ref{lem:finitesubaction} and this first requires introducing some notations.

Recall that the fusion rules of $S_{N}^{+}$ are given by the formula
\begin{equation}\label{eq:fusionrules}
\rho_{1}\XBox \rho_{k}\cong \rho_{k-1}\oplus \rho_{k}\oplus \rho_{k+1}.
\end{equation}
There are various equivalent ways to express $\rho_{n}$ and to avoid confusion, we start by carefully fixing some conventions concerning the equality above, using the notations from \cite{freslon2021tannaka}. These notations involve non-crossing partitions to which operators and representations of $S_{N}^{+}$ are associated. The reader may refer to \cite{freslon2023compact} for a detailed exposition of that theory. In the sequel, the representation $\rho_{n}$ will always be seen as acting on the space
\begin{equation*}
H_{n}\coloneqq P_{|^{\odot n}}(\mathbb{C}^{\otimes n})\subset\mathbb{C}^{\otimes n}.
\end{equation*}
According to the proofs of \cite[Theorem 4.27 and Theorem 4.18]{freslon2013representation}, Equation \eqref{eq:fusionrules} is then realized by the sum of the ranges of the following projections for $k-1\leqslant n\leqslant k+1$: 
\begin{equation*}
P_{n}^{1,k}:H_{1}\otimes H_{k}\to H_{n}; \quad P_{n}^{1,k} = \lambda_{n}P_{|^{\odot n}}\circ Q_{v_{n}}\circ P_{h_{n}}|_{H_{1}\otimes H_{k}},
\end{equation*}
where
\begin{itemize}
\item $h_{k-1}=\usebox{\boxuuddpartrot}\odot|^{\odot k-1}$;
\item $h_{k}=\vierpartrot\odot|^{\odot k-1}$;
\item $h_{k+1}=|^{\odot k+1}$;
\item $v_{n}$ is the unique partition realizing the through-block decomposition $h_{n}=v_{n}^{*}v_{n}$;
\item $\lambda_{n}$ is a constant depending only on $k$ and $n$.
\end{itemize}
Flipping the relation above yields a similar decomposition of $u^{k}\XBox u^{1}$, with corresponding maps
\begin{equation*}
P_{n}^{k,1}:H_{k}\otimes H_{1}\to H_{n}.
\end{equation*}
With this in hand we can improve the results of \cite[Sec 6]{freslon2021tannaka}.

\begin{lem}\label{lem:ergodicactionspectrumk}
With the notations above, let $k > 1$ be an integer and consider the finite-dimensional subspace
\begin{equation*}
C_{k} = B_{0}\oplus B_{1}\oplus B_{k}.
\end{equation*}
Then, $C_{k}$ is a C*-subalgebra of $C(X)$.
\end{lem}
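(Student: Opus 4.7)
The proof would split into two parts: stability under the adjoint and stability under multiplication.

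The first is immediate from the fact, recalled in the proof of Lemma~\ref{lem:finitesubaction}, that every irreducible representation of $S_{N}^{+}$ is self-conjugate; this gives $B_{n}^{*} = B_{n}$ for all $n$, hence $C_{k}^{*} = C_{k}$.

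For stability under multiplication, ergodicity ($B_{0} = \C 1$) reduces the task to showing that $B_{i}\cdot B_{j}\subset C_{k}$ for $(i,j)\in\{(1,1),(1,k),(k,1),(k,k)\}$. Since the multiplication $m:B\otimes B\to B$ is $S_{N}^{+}$-equivariant, each $m|_{B_{i}\otimes B_{j}}$ decomposes, according to the fusion rules \eqref{eq:fusionrules}, as a sum of pieces indexed by the irreducibles $\rho_{\ell}\subset \rho_{i}\XBox\rho_{j}$, with each piece determined by an intertwiner in $\Mor_{\G}(\rho_{\ell},\rho_{i}\XBox\rho_{j})$. The goal is to show that the pieces with $\ell\notin\{0,1,k\}$ vanish. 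For $(1,1)$, this is the instance $n_{0}=1$ of the inclusion $B_{n_{0}}\cdot B_{n_{0}}\subset B_{0}\oplus B_{1}$ used in Lemma~\ref{lem:finitesubaction} and imported from \cite[Thm 6.5]{freslon2021tannaka}. For $(1,k)$ and $(k,1)$, the only potentially dangerous components are the $B_{k-1}$ and $B_{k+1}$ ones (the $B_{k-1}$ piece being automatic when $k=2$, since then $B_{k-1}=B_{1}\subset C_{k}$), and I expect to kill them by a parallel argument: commutativity of $C(X)$ forces $m|_{B_{1}\otimes B_{k}}$ to be compatible under the flip with $m|_{B_{k}\otimes B_{1}}$, and combining this symmetry with the partition description of the one-dimensional morphism spaces spanned by $P_{k\pm 1}^{1,k}$ and $P_{k\pm 1}^{k,1}$ should force the corresponding intertwiners to vanish.

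The main obstacle lies in the $(k,k)$-case, where $\rho_{k}\XBox\rho_{k}=\bigoplus_{n=0}^{2k}\rho_{n}$ has many summands and the morphism spaces $\Mor_{\G}(\rho_{\ell},\rho_{k}\XBox\rho_{k})$ may be higher-dimensional. My plan is to reduce this case either to the previously treated ones via associativity — expressing elements of $B_{k}$ as outputs of morphisms in $\Mor_{\G}(\rho_{k},\rho_{1}\XBox\rho_{k-1})$ applied to products in $B_{1}\cdot B_{k-1}$ (assuming $B_{k-1}\neq 0$, with the vanishing scenario needing a separate argument) and then invoking the already-established $(1,\cdot)$ cases — or by a direct generalization of the partition-combinatorial computation of \cite[Thm 6.5]{freslon2021tannaka}, using commutativity of $C(X)$ to produce enough symmetry constraints to annihilate every $B_{\ell}$-component with $\ell\notin\{0,1,k\}$. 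Making either route completely rigorous, in particular handling the edge cases where intermediate spectral subspaces vanish, is the principal technical difficulty.
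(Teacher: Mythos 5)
Your overall architecture matches the paper's: self-adjointness from self-conjugacy of the irreducibles, and for the mixed products $B_{1}\cdot B_{k}$ the paper does exactly what you sketch --- commutativity of $C(X)$ forces $P_{n}^{1,k}(\eta_{1}\otimes\eta_{2})=\pm P_{n}^{k,1}(\eta_{2}\otimes\eta_{1})$ whenever the $n$-th component of the product is non-zero, and an explicit choice of vectors (namely $\eta_{1}=e_{1}-e_{2}$ and $\eta_{2}=e_{1}\otimes e_{2}\otimes e_{1}\otimes\cdots$, using the partition description of $P_{k\pm 1}^{1,k}$) violates this identity for $n=k\pm 1$, killing those components. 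So that part of your plan is sound and is essentially the paper's argument.

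The genuine gap is in the $(k,k)$ case, which you single out as the principal unresolved difficulty. It is not a difficulty at all: the inclusion $B_{k}\cdot B_{k}\subset B_{0}\oplus B_{1}$ holds for \emph{every} $k\geqslant 1$, not only for the minimal non-vanishing index $n_{0}$, and this is precisely what \cite[Lem 6.1 and Prop 6.4]{freslon2021tannaka} provide; the paper simply cites this (it is the same source you already invoke for the $(1,1)$ case, whose scope you have underestimated). Your proposed workarounds would not easily close the gap on their own: the associativity route requires knowing that $B_{k}$ is spanned by images of products from $B_{1}\cdot B_{k-1}$ under intertwiners, which is not established (and, as you note, degenerates if $B_{k-1}=0$), while the ``direct generalization of the partition combinatorics'' is exactly the content of the cited results and would have to be carried out in full. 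One further small imprecision: ergodicity gives $B_{0}=\C 1$, so the products $B_{0}\cdot B_{j}$ are indeed harmless, but the real reduction is simply that the only products not already controlled by the cited inclusion are $B_{1}\cdot B_{k}$ and its flip --- which is where the paper, like you, concentrates the work.
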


\begin{proof}
First note that $C_{k}^{*} = C_{k}$ because all the irreducible representations of $S_{N}^{+}$ are self-conjugate. Recall moreover that it follows from \cite[Lem 6.1 and Prop 6.4]{freslon2021tannaka} that for any $k\geqslant 1$, $B_{k}\cdot B_{k}\subset B_{0}\oplus B_{1}$. Thus, the only thing that we have to prove is that $B_{1}\cdot B_{k}\subset C_{k}$. This will be done with arguments similar to those of \cite{freslon2021tannaka}.

Recall that there is an equivariant $*$-isomorphism
\begin{equation*}
B\cong\bigoplus_{n\in\N}\overline{\F(\rho_{n})}\otimes H_{n}
\end{equation*}
and that for $\xi_{1}\in\varphi(\rho_{1}),\xi_{2}\in\F(\rho_{k})$ and $\eta_{1}\in H_{1},\eta_{2}\in H_{k}$,
the product reads 
\begin{equation*}
(\overline{\xi}_{1}\otimes\eta_{1})\cdot(\overline{\xi}_{2}\otimes\eta_{2}) = \sum_{n=k-1}^{k+1}\overline{\F(P_{n}^{1,k})\circ\iota_{1,k}(\xi_{1}\otimes\xi_{2})}\otimes P_{n}^{1,k}(\eta_{1}\otimes\eta_{2}).
\end{equation*}
Note that the sum above corresponds to the direct sum in the decomposition formula of $B$, and we used here the fact that any irreducible subrepresentation $\rho_{n}\subset \rho_{1}\otimes \rho_{k}$ has multiplicity one. A similar formula holds for $(\overline{\xi}_{2}\otimes\eta_{2})\cdot(\overline{\xi}_{1}\otimes\eta_{1})$. Because $B$ is assumed to be commutative, we have for any choices of $\xi_{1},\xi_{2},\eta_{1},\eta_{2}$, 
\begin{equation*}
\sum_{n=k-1}^{k+1}\overline{\F(P_{n}^{1,k})\circ\iota_{1,k}(\xi_{1}\otimes\xi_{2})}\otimes P_{n}^{1,k}(\eta_{1}\otimes\eta_{2})=\sum_{n=k-1}^{k+1}\overline{\F(P_{n}^{k,1})\circ\iota_{k,1}(\xi_{2}\otimes\xi_{1})}\otimes P_{n}^{k,1}(\eta_{2}\otimes\eta_{1}).
\end{equation*}
As in the proof of \cite[Lem 6.1]{freslon2021tannaka}, we conclude that if the $n$-th component in the sum is non-zero, then
\begin{equation*}
P_{n}^{1,k}(\eta_{1}\otimes\eta_{2}) = \pm P_{n}^{k,1}(\eta_{2}\otimes\eta_{1})
\end{equation*}
for $k-1\leqslant n\leqslant k+1$ and for all $\eta_{1},\eta_{2}$ as soon as $\F(P_{n}^{k,1})\circ\iota\neq 0$ (which is equivalent to $\F(P_{n}^{1,k})\circ\iota\neq 0$). To conclude, we have to prove that the condition above fails for $n = k-1$ and $n = k+1$.

We will focus on the case $n=k-1$, the case $n=k+1$ being similar. We have 
\begin{equation*}
P_{k-1}^{1,k} = \lambda_{k-1}P_{|^{\odot k-1}}\circ Q_{\sqcup\odot|^{\odot k-1}}\circ P_{\usebox{\boxuuddpartrot}\odot|^{\odot k-1}}|_{H_{1}\otimes H_{k}}.
\end{equation*}
According to \cite[Lem 5.7]{freslon2013representation}, any $q\prec\usebox{\boxuuddpartrot}\odot|^{\odot k-1}$ must be of the form $\usebox{\boxuuddpartrot}\odot q'$ for some $q'\prec|^{\odot k-1}$. Thus, $P_{\usebox{\boxuuddpartrot}\odot|^{\odot k-1}}=S_{\usebox{\boxuuddpartrot}}\otimes P_{|^{\odot k-1}}$,
and consequently 
\begin{equation*}
P_{k-1}^{1,k}=\lambda_{k-1}(Q_{\sqcup}\otimes P_{|^{\odot k-1}})|_{H_{1}\otimes H_{k}}
\end{equation*}
for some nonzero constant $\lambda_{k-1}$ depending only on $k$. Symmetrically, we have 
\begin{equation*}
P_{k-1}^{k,1}=\lambda_{k-1}(P_{|^{\odot k-1}}\otimes Q_{\sqcup})|_{H_{k}\otimes H_{1}}.
\end{equation*}
As in the proof of  \cite[Prop 6.4]{freslon2021tannaka} , we let $v_1 ,v_2 , v_3$ be an orthonormal basis of $\mathrm{span} \{e_i :1\leq i\leq 4 \} \cap \xi ^\bot $ given by
\[
v_1 = \frac{1}{\sqrt 2} e_1 - \frac{1}{\sqrt 2}e_2,\quad
v_2 = \frac{1}{\sqrt 6} e_1 + \frac{1}{\sqrt 6} e_2 - \frac{2}{\sqrt 6} e_3, \quad
v_3 = \frac{1}{2\sqrt 3} e_1 + \frac{1}{2\sqrt 3} e_2 +\frac{1}{2\sqrt 3} e_3 - \frac{3}{2\sqrt 3} e_4,
\]
and
\begin{equation*}
A_{\mathbf{i}} = v_{i_{1}}\otimes \cdots\otimes v_{i_{k}}
\end{equation*}. Now we take $\eta_{1} = e_{1}-e_{2}\in H_{1}$ and $\eta_{2}=A_{\mathrm{{i}}}\in H_{k}$
with $\boldsymbol{\mathrm{i}}=(1,2,1,2,\cdots)$. Set $\boldsymbol{\mathrm{j}}=(2,1,2,1,\cdots)$
of length $k-1$. Then
\begin{align*}
\langle P_{k-1}^{1,k}(\eta_{1}\otimes\eta_{2}),A_{\boldsymbol{\mathrm{j}}}\rangle & =\lambda_{k-1}'\langle(e_{1}-e_{2})\otimes A_{\mathrm{\boldsymbol{i}}},\sum_{i}e_{i}\otimes e_{i}\otimes A_{\boldsymbol{\mathrm{j}}}\rangle\\
 & =\lambda_{k-1}'\|A_{\boldsymbol{\mathrm{j}}}\|^{2},
\end{align*}
but
\[
\langle P_{k-1}^{k,1}(\eta_{2}\otimes\eta_{1}),A_{\boldsymbol{\mathrm{j}}}\rangle=\lambda_{k-1}'\langle A_{\mathrm{\boldsymbol{i}}}\otimes(e_{1}-e_{2}),A_{\boldsymbol{\mathrm{j}}}\otimes\sum_{i}e_{i}\otimes e_{i}\rangle=0.
\]
This means that $P_{n}^{1,k}(\eta_{1}\otimes\eta_{2})\neq\pm P_{n}^{k,1}(\eta_{2}\otimes\eta_{1})$ and hence both are $0$.
\end{proof}

We are now ready to prove the first main result of this work, elucidating classical ergodic actions of $S_{N}^{+}$.

\begin{thm}\label{thm:ergodic}
Let $X$ be a compact Hausdorff space and let $\alpha : C(X)\to C(X)\otimes C(S_{N}^{+})$ be an ergodic action, for $N\geqslant 4$. Then, up to isomorphism, $\alpha$ is either the trivial action on a point or the standard action on $N$ points.
\end{thm}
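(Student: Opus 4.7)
My plan is to combine Lemma \ref{lem:ergodicactionspectrumk} and Corollary \ref{cor:spectrumk} to show that, beyond $B_{0}$ and $B_{1}$, every spectral subspace must vanish, at which point the classification of the finite case kicks in.

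First I would set up the dense Hopf $*$-subalgebra $\mathcal{B} = \bigoplus_{n\geqslant 0} B_{n}$ of $C(X)$. By ergodicity, $B_{0} = \C 1$. If $B_{n} = \{0\}$ for all $n\geqslant 1$, then $\mathcal{B} = \C$ and by density $C(X) = \C$, so $X$ is a point and $\alpha$ is the trivial action. Otherwise, the same observation as in the proof of Lemma \ref{lem:finitesubaction} forces $B_{1}\neq \{0\}$, since the minimal index $n_{0}\geqslant 1$ with $B_{n_{0}}\neq \{0\}$ must equal $1$ (the product $B_{n_{0}}\cdot B_{n_{0}}\subset B_{0}\oplus B_{1}$ together with $B_{1} = \{0\}$ would contradict the existence of a non-trivial projection).

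For the main step, I would fix an arbitrary $k\geqslant 2$ and form
\begin{equation*}
C_{k} = B_{0}\oplus B_{1}\oplus B_{k}.
\end{equation*}
Lemma \ref{lem:ergodicactionspectrumk} provides that $C_{k}$ is a $*$-subalgebra, and it is automatically finite-dimensional (each $B_{n}$ has dimension at most $\dim(\rho_{n})^{2}$ by ergodicity), hence a C*-subalgebra. Since spectral subspaces are invariant we have $\alpha(C_{k})\subset C_{k}\otimes C(S_{N}^{+})$, and the restricted action is ergodic with spectrum contained in $\{0, 1, k\}$. Corollary \ref{cor:spectrumk} then forces $C_{k}$ to be either $\C$ or $\C^{N}$ with the standard action. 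The first possibility is excluded because $B_{1}\neq \{0\}$; in the second case, the spectrum of the standard action is $\{0, 1\}$, forcing $B_{k} = \{0\}$.

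Running $k$ through all integers at least $2$ yields $\mathcal{B} = B_{0}\oplus B_{1}$, which is finite-dimensional and hence norm closed, so by density coincides with $C(X)$. Thus $C(X)\cong \C^{N}$ and Lemma \ref{lem:standard} identifies $\alpha$ with the standard action, concluding the proof. The real work has already been absorbed into Lemma \ref{lem:ergodicactionspectrumk}, whose commutation analysis of the product $B_{1}\cdot B_{k}$ is exactly what makes the three-block C*-subalgebra $C_{k}$ available; once one has that, the reduction to the finite classification is essentially book-keeping.
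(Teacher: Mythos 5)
Your proposal is correct and follows essentially the same route as the paper: Lemma \ref{lem:ergodicactionspectrumk} makes $C_{k}$ a finite-dimensional invariant C*-subalgebra, Corollary \ref{cor:spectrumk} kills $B_{k}$ for every $k\geqslant 2$, and the finite classification (the paper routes this through Proposition \ref{prop:finiteaction}, which itself rests on Lemma \ref{lem:standard}) finishes the argument. The extra bookkeeping you supply (density of $\bigoplus B_{n}$, the multiplicity bound, the exclusion of the one-point case via $B_{1}\neq\{0\}$) is all consistent with what the paper leaves implicit.
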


\begin{proof}
For $k\geqslant 2$, we have by Lemma~\ref{lem:ergodicactionspectrumk} that $C_{k}$ is a finite-dimensional subalgebra, which is by construction invariant under the action. Moreover, by construction the spectrum of the action is contained in $\{0, 1, k\}$. By Corollary~\ref{cor:spectrumk}, we conclude that $C_{k}$ is $N$-dimensional, which means that $B_{k} = \{0\}$. Since this is true for all $k\geqslant 2$, $\mathrm{Sp}(C(X),\alpha)\subset \{0, 1\}$ and we conclude by Proposition~\ref{prop:finiteaction}.
\end{proof}

\begin{rem}
Once again, the result still holds trivially of $N = 2$. However, it fails for $N = 3$ since for instance $S_{3}^{+} = S_{3}$ acts transitively on itself.
\end{rem}

\begin{rem}
There is an alternative argument in the case of finite actions, which was indicated to us by M. Yamashita. Ergodic actions of a given compact quantum group on finite-dimensional C*-algebras are called \emph{torsion actions}, because of their role in the categorical formulation of the Baum-Connes conjecture for quantum groups (see \cite{meyer2006baum}). Torsion actions of $S_{N}^{+}$ were classified by C. Voigt in \cite{voigt2015structure}, and it turns out that there are exactly two of them (the trivial action on $\C$ and the standard action on $\C^{N}$) up to equivariant Morita equivalence. This means that given a finite space $X$ together with an ergodic action of $S_{N}^{+}$, there exists a finite-dimensional unitary representation $v$ of $S_{N}^{+}$ on a Hilbert space $H$ such that
\begin{equation*}
C(X)\cong H\otimes \C\otimes \overline{H} \quad \text{ or } \quad C(X)\cong H\otimes \C^{N}\otimes \overline{H}
\end{equation*}
equivariantly. Commutativity forces $H = \C$, from which the result follows.
\end{rem}

\section{Quantum permutations: the general case}\label{sec:general}

We will now consider arbitrary actions $\alpha : C(X)\to C(X)\otimes C(S_{N}^{+})$ on classical spaces. If $\alpha$ is not assumed to be ergodic, the spectral subspaces need no longer be finite-dimensional, and the previous strategy breaks down. Nevertheless, Theorem~\ref{thm:ergodic} still tells us something about the action because it gives us a description of the \emph{orbits}. Using it, we will be able to describe $X$ and $\alpha$ completely. Before that, let us gather some basic information on the action.

\subsection{Orbit structure of the action}

First, the lack of ergodicity means that the C*-algebra $C(X)^{\alpha}$ is non-trivial. Its spectrum is a compact space $Y$ which can be thought of as the space of orbits of the action. To make this more precise, recall that by the results of \cite{huang2016invariant}, given $x\in X$, there exists a minimal closed subset containing $x$, denoted by $\O(x)$, which is globally invariant under the action in the sense that the ideal $J$ of functions vanishing on $\O(x)$ satisfies $\alpha(J)\subset J\otimes C(S_{N}^{+})$. The set $\O(x)$ is called the \emph{orbit of $x$} and it follows from invariance that $\alpha$ restricts to an action on the quotient $C(X)/J$ which can be proven to be ergodic (see \cite[Thm 4.5]{huang2016invariant}). In particular, by Theorem~\ref{thm:ergodic}, the orbits all consist in either $1$ or $N$ points. Moreover, if the orbit has $N$ points, then the action of $S_{N}^{+}$ is standard, hence so is the corresponding action of the classical permutation group $S_{N}$. In other words, the orbit structure is the same for the action of $S_{N}^{+}$ or for its restriction to $S_{N}$. Let us now use the orbits to obtain information on the spectral subspaces of the action.

\begin{prop}\label{prop:spectrumnonergodic}
Let $X$ be a compact Hausdorff space with an action $\alpha$ of $S_{N}^{+}$. Then, the spectrum of the action is contained in $\{0, 1\}$.
\end{prop}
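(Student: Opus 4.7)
The natural strategy is to exploit the orbit decomposition provided by Huang's invariant subsets theory together with the already established ergodic classification in Theorem \ref{thm:ergodic}. Since each orbit carries an ergodic action that must be either trivial or the standard one on $N$ points, its spectrum is automatically contained in $\{0,1\}$. If we can transport this information back to $X$ via the quotient maps onto orbits, we obtain the desired constraint on the global spectrum.

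More concretely, fix $k\geqslant 2$ and suppose $f\in B_{k}$. For any point $x\in X$, let $\O(x)\subset X$ denote its orbit in the sense recalled just before the proposition, and let $J_{x}\subset C(X)$ be the (closed, $\alpha$-invariant) ideal of functions vanishing on $\O(x)$. The quotient map
\begin{equation*}
\pi_{x} : C(X)\to C(X)/J_{x}\cong C(\O(x))
\end{equation*}
is then equivariant for $\alpha$ and the induced action $\alpha_{x}$ on $C(\O(x))$. The key observation is that equivariance forces $\pi_{x}$ to carry the $k$-th spectral subspace into the $k$-th spectral subspace: indeed, if $T\in\Mor_{S_{N}^{+}}(\rho_{k},\alpha)$, then $\pi_{x}\circ T\in\Mor_{S_{N}^{+}}(\rho_{k},\alpha_{x})$, and since $B_{k}=\spa\{T(\C^{d_{k}}) : T\in \Mor_{S_{N}^{+}}(\rho_{k},\alpha)\}$ we get $\pi_{x}(B_{k})\subset C(\O(x))_{k}$.

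Now by \cite[Thm 4.5]{huang2016invariant}, $\alpha_{x}$ is ergodic, so Theorem \ref{thm:ergodic} applies and tells us that $(C(\O(x)),\alpha_{x})$ is either the trivial action on a point or the standard action of $S_{N}^{+}$ on $N$ points. In both cases the spectrum is contained in $\{0,1\}$, and in particular $C(\O(x))_{k}=\{0\}$ for every $k\geqslant 2$. Therefore $\pi_{x}(f)=0$, i.e. $f$ vanishes on $\O(x)$; in particular $f(x)=0$. Since this holds for every $x\in X$, we conclude $f=0$, so $B_{k}=\{0\}$ for all $k\geqslant 2$ and $\mathrm{Sp}(C(X),\alpha)\subset\{0,1\}$.

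The only nontrivial input is the compatibility of spectral subspaces with equivariant quotients, which is standard once one recalls the definition of $B_{k}$ in terms of intertwiners; everything else is just combining Huang's orbit-ergodicity result with the ergodic classification from the previous section. I do not anticipate any real obstacle, as the argument reduces the non-ergodic case directly to the ergodic one orbit by orbit.
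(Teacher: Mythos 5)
Your proof is correct and follows essentially the same route as the paper: restrict to each orbit via the equivariant quotient map, invoke Huang's result that the restricted action is ergodic, apply Theorem \ref{thm:ergodic} to kill the spectral subspaces $B_{k}$ for $k\geqslant 2$ orbit by orbit, and conclude $f=0$ by evaluating at every point. The only difference is that you spell out the (standard) compatibility of spectral subspaces with equivariant quotients, which the paper takes for granted.
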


\begin{proof}
The idea is to restrict to the orbits, for which the result holds by Theorem~\ref{thm:ergodic}. Let $k\geqslant 2$ and let $f\in B_{k}\subset C(X)$. For any $x\in X$, consider its orbit $\O(x)$ with the restricted action
\begin{equation*}
\alpha_{x} : C(\O(x) ) \to C( \O(x) ) \otimes C(S_{N}^{+}).
\end{equation*}
The quotient map $\pi_{x} : C(X)\to C(\O(x))$ is equivariant, hence $B_{k}$ is sent to the spectral subspace of $\alpha_{x}$ for the irreducible representation $\rho_{k}$. But the latter space is $\{0\}$, so we conclude that the restriction of $f$ to $\O(x)$ vanishes. In particular, $f(x) = 0$ and since $x$ was arbitrary, we conclude that $f = 0$.
\end{proof}

Points whose orbit have cardinality one are called \emph{fixed points}. As one may expect, these form a closed subset.

\begin{prop}\label{prop:Zclosed}
The set $Z$ of fixed points is closed.
\end{prop}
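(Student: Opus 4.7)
The plan is to characterize the set $Z$ of fixed points as the common zero set of a family of continuous maps, and then conclude that $Z$ is closed because each of these zero sets is closed. The crucial identification is the canonical isomorphism $C(X)\otimes C(S_{N}^{+})\cong C(X,C(S_{N}^{+}))$ (since $C(X)$ is commutative and in particular nuclear), under which any element $F\in C(X)\otimes C(S_{N}^{+})$ becomes a genuine continuous function $x\mapsto F(x)\in C(S_{N}^{+})$.

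For each $f\in C(X)$, I would consider
\begin{equation*}
g_{f} \coloneqq \alpha(f) - f\otimes 1 \in C(X)\otimes C(S_{N}^{+}),
\end{equation*}
viewed as a continuous map $X\to C(S_{N}^{+})$, and claim that
\begin{equation*}
Z \;=\; \bigcap_{f\in C(X)} g_{f}^{-1}(\{0\}).
\end{equation*}
The forward inclusion follows from the definition of the orbit: if $x\in Z$, then $\O(x)=\{x\}$, so the maximal ideal $J_{x}=\ker(\mathrm{ev}_{x})$ is $\alpha$-invariant, i.e.\ $\alpha(J_{x})\subset J_{x}\otimes C(S_{N}^{+})$. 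Applying this to $f-f(x)\cdot 1\in J_{x}$ and evaluating the first leg at $x$ gives $\alpha(f)(x)=f(x)\cdot 1_{C(S_{N}^{+})}$, hence $g_{f}(x)=0$.

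For the reverse inclusion, if $g_{f}(x)=0$ for every $f\in C(X)$, then for any $f\in J_{x}$ we have $\alpha(f)(x)=0$; under the identification with $C(X,C(S_{N}^{+}))$ this means $\alpha(f)\in J_{x}\otimes C(S_{N}^{+})$. Thus $J_{x}$ is $\alpha$-invariant, so $\{x\}$ is a closed invariant subset of $X$, which forces $\O(x)\subseteq\{x\}$ and hence $x\in Z$. The conclusion is then immediate: each $g_{f}^{-1}(\{0\})$ is the preimage of the closed point $0\in C(S_{N}^{+})$ under a continuous map, hence closed, and an intersection of closed sets is closed. There is no real obstacle here, the whole argument rests only on the nuclear tensor product identification and the intrinsic characterization of fixed points via ideal-invariance from \cite{huang2016invariant}.
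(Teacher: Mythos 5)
Your proof is correct, but it follows a genuinely different route from the paper. The paper proves that $X\setminus Z$ is open by a purely classical argument: since the orbits of $S_{N}^{+}$ coincide with those of the restricted $S_{N}$-action, a non-fixed point $x$ admits $\sigma\in S_{N}$ with $\sigma(x)\neq x$, and separating $x$ from $\sigma(x)$ by open sets produces a neighbourhood $V$ with $\sigma(V)\cap V=\emptyset$, hence free of fixed points. You instead work intrinsically with the coaction, writing $Z=\bigcap_{f}\{x : (\mathrm{ev}_{x}\otimes\id)\alpha(f)=f(x)1\}$ via the identification $C(X)\otimes C(S_{N}^{+})\cong C(X,C(S_{N}^{+}))$ and the ideal-theoretic characterization of invariant singletons from Huang's framework; each condition cuts out a closed set, so $Z$ is closed. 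Both arguments are complete. The paper's is more elementary given what is already established at that point (the coincidence of $S_{N}^{+}$- and $S_{N}$-orbits, which follows from the ergodic classification), but it genuinely relies on that structural input. Your argument uses none of it: it shows, for an arbitrary action of an arbitrary compact quantum group on a compact Hausdorff space, that the set of points with singleton orbit is closed. That extra generality is a real gain; the only points worth being careful about are the exactness needed to identify $\ker(\mathrm{ev}_{x}\otimes\id)$ with $J_{x}\otimes C(S_{N}^{+})$ (harmless here since $C(X)$ is nuclear, as you note) and the minimality property of $\O(x)$ among closed invariant subsets, which is exactly what \cite{huang2016invariant} provides.
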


\begin{proof}
We will prove that the complement of $Z$ is open. Let $x\in X\setminus Z$, let $x'\in \O(x)\setminus\{x\}$ and let $U, U'$ be open neihgborhoods of $x$ and $x'$ respectively such that $U\cap U' = \emptyset$. Because the orbits of $S_{N}^{+}$ are the same as the orbits of $S_{N}$, there exists a permutation $\sigma$ such that $\sigma(x) = x'$. Since $S_{N}$ acts by homeomorphisms, there exists $V\subset U$ such that $\sigma(V)\subset U'$. But then, $\sigma(V)\cap V = \emptyset$, so that $V$ contains no fixed points. In other words, $V\subset X\setminus Z$, proving that the latter is open.
\end{proof}


\subsection{The fundamental domain}

The inclusion $C(Y) = C(X)^{\alpha}\subset C(X)$ translates into a continuous surjection $X\to Y$ sending each point to its orbit. In particular, the surjection is injective on $Z$ and is $N$ to $1$ on all other points, so that $X$ can be seen, at least set-theoretically, as the quotient of $N$ disjoint copies of $Y$ glued along $Z$. Our goal is to make that identification precise in a topological and equivariant way. It is certainly no surprise to the reader that this will be done through the construction of a sort of fundamental domain for the action, that is to say a ``nice'' subset of $X$ containing exactly one point of every orbit.

Let us start with a ``local'' version of what we need. In the sequel, we denote by $\Stab(x)$ the stabilizer of $x$ under the action of the classical permutation group $S_{N}$. Remember that $S_{N}$ and $S_{N}^{+}$ have the same orbits, hence in particular the same fixed points.

\begin{lem}\label{lem:localsection}
Let $x\in X$ be a point which is not fixed. Then, there exists an open neighborhood $U_{x}$ of $x$ such that for all $\sigma\in S_{N}$, we have
\begin{enumerate}[label=\textup{(\arabic*)}]
\item $\sigma(U_{x}) = U_{x}$, if $\sigma\in \Stab(x)$;
\item $\sigma(U_{x})\cap U_{x} = \emptyset$, otherwise.
\end{enumerate}
\end{lem}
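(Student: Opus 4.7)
The plan is to exploit the classical action of $S_N$ on $X$ obtained by composing $\alpha$ with $\id\otimes\pi_{\text{ab}}$. As already used in the proof of Proposition \ref{prop:Zclosed}, this action is by homeomorphisms and (thanks to Theorem \ref{thm:ergodic}) has the same orbits, hence the same stabilizers, as the action of $S_N^{+}$. So $\Stab(x)$ is a well-defined finite subgroup of $S_N$ and the question reduces to a purely classical slice construction around a non-fixed point of a finite group action on a Hausdorff space. I would carry this out in two steps: first separate $x$ from each of its non-trivial translates, then symmetrize under $\Stab(x)$.

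For the separation step, I fix $\sigma\in S_N\setminus \Stab(x)$, so that $\sigma(x)\neq x$, and use Hausdorffness of $X$ to pick disjoint open sets $V_\sigma\ni x$ and $W_\sigma\ni \sigma(x)$. Since $\sigma$ is a homeomorphism, $V_\sigma\cap \sigma^{-1}(W_\sigma)$ is still an open neighborhood of $x$, and replacing $V_\sigma$ by it ensures $\sigma(V_\sigma)\subset W_\sigma$, hence $V_\sigma\cap \sigma(V_\sigma)=\emptyset$. The set $S_N\setminus \Stab(x)$ being finite, intersecting these $V_\sigma$ produces a single open neighborhood $V$ of $x$ with the property that $V\cap \sigma(V)=\emptyset$ for every $\sigma\notin \Stab(x)$.

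For the symmetrization step, I set $U_x=\bigcap_{\tau\in \Stab(x)} \tau(V)$. This is a finite intersection of open sets (each $\tau$ is a homeomorphism), each of which contains $x$ (since $\tau(x)=x$ for $\tau\in \Stab(x)$), so $U_x$ is an open neighborhood of $x$; it is $\Stab(x)$-invariant by construction, giving (1). For (2), assume $\sigma\notin \Stab(x)$ and $y\in U_x\cap \sigma(U_x)$. Taking $\tau=\id\in \Stab(x)$ in the defining intersection gives $y\in V$, while $y\in \sigma(U_x)\subset \sigma(V)$, contradicting the separation property of $V$. There is no real obstacle here; the only point worth being careful about is that one works with the classical group $S_N$ (which genuinely acts by homeomorphisms on $X$), and uses the coincidence of orbits with those of $S_N^{+}$ only to make sense of $\Stab(x)$ and of the hypothesis that $x$ is not fixed.
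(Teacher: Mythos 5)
Your proof is correct and follows essentially the same route as the paper: separate $x$ from each translate $\sigma(x)$, $\sigma\notin\Stab(x)$, using Hausdorffness and continuity, intersect over the finitely many such $\sigma$, and then symmetrize by intersecting over $\Stab(x)$. The only (welcome) addition is that you explicitly verify that the symmetrized set still satisfies condition (2), a point the paper leaves to the reader.
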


\begin{proof}
The argument is standard, and explained for instance in \cite{kapovich2023note}. We nevertheless give it for completeness. Given $\sigma\notin \Stab(x)$, let $U_{1}$ and $U_{2}$ be disjoint open neighborhoods of $x$ and $\sigma(x)$ respectively. Since the action is by homeomorphisms, there exists an open neighborhood $V$ of $x$ such that $\sigma(V)\subset U_{2}$, hence $V_{\sigma} = U_{1}\cap V$ satisfies $\sigma(V_{\sigma})\cap V_{\sigma} = \emptyset$. Let us now set
\begin{equation*}
V_{x} = \bigcap_{\sigma\in S_{N}\setminus\Stab(x)} V_{\sigma}
\end{equation*}
and observe that it satisfies the second condition of the statement. It then suffices to set
\begin{equation*}
U_{x} = \bigcap_{\sigma\in \Stab(x)}\sigma(V_{x})
\end{equation*}
to get the result.
\end{proof}

We can now bootstrap the previous result to get a fundamental domain for the non-trivial orbits (i.e. those of points which are not fixed). However, in order to do so we will have to exclude the case $N = 6$, for reasons coming from specific features of the classical group $S_{6}$ (we will nevertheless recover the case $N = 6$ in the end by different means, see Theorem~\ref{thm:allactionsSn+}). To make this clearer, we start with a purely group-theoretic result.

\begin{lem}\label{lem:indexNsubgroups}
Let $N\neq 6$ and let $G$ be a subgroup of $S_{N}$ of index $N$. Then, $G$ is the stabilizer of a point in $\{1, \cdots, N\}$ for the standard action.
\end{lem}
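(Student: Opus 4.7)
The approach is classical: realise $G$ as a point stabiliser for an abstract transitive action on $N$ points, and then use the uniqueness of index-$N$ subgroups up to conjugation, which relies on the triviality of $\mathrm{Out}(S_N)$ for $N\neq 6$.

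First, I would consider the left multiplication action of $S_N$ on the coset space $S_N/G$, which has cardinality $N$. Picking any bijection $S_N/G\to\{1,\dots,N\}$ sending $eG$ to some $i_0$ yields a homomorphism $\phi:S_N\to S_N$ whose kernel is a normal subgroup contained in $G$. Assuming $N\geqslant 5$, the only normal subgroups of $S_N$ are $\{e\}$, $A_N$ and $S_N$; since $|A_N|=N!/2>(N-1)!=|G|$, the kernel must be trivial, so $\phi$ is an injective endomorphism of the finite group $S_N$, hence an automorphism.

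Next, I would invoke the classical fact that $\mathrm{Out}(S_N)=1$ for all $N\notin\{2,6\}$. Thus $\phi$ is inner, i.e.\ there exists $\tau\in S_N$ such that $\phi(g)=\tau g\tau^{-1}$ for all $g\in S_N$. By construction $G$ is the stabiliser of $eG$ under the coset action, so it coincides with $\phi^{-1}(\mathrm{Stab}_{\mathrm{std}}(i_0))=\tau^{-1}\mathrm{Stab}_{\mathrm{std}}(i_0)\tau=\mathrm{Stab}_{\mathrm{std}}(\tau^{-1}(i_0))$, a point stabiliser for the standard action.

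The small cases $N\leqslant 4$ must be handled separately and are elementary: for $N=1,2$ the statement is vacuous or trivial, for $N=3$ any subgroup of order $2$ is generated by a transposition hence is a point stabiliser, and for $N=4$ every subgroup of order $6$ in $S_4$ is isomorphic to $S_3$ and a quick enumeration (or the same coset-action argument, using that $\mathrm{Out}(S_4)=1$) shows it must fix a point. The only real obstacle is the appeal to $\mathrm{Out}(S_N)=1$: without it the statement genuinely fails at $N=6$, where the exceptional outer automorphism produces an additional conjugacy class of index-$6$ subgroups (isomorphic to $\mathrm{PGL}_2(\mathbb{F}_5)$) that fix no point, which explains why this case must be excluded.
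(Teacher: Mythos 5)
Your proof is correct and follows essentially the same route as the paper's: the coset action $S_N\curvearrowright S_N/G$ gives a homomorphism $S_N\to S_N$ which is injective because no nontrivial normal subgroup (for $N\geqslant 5$, only $A_N$) fits inside $G$, and then the triviality of $\mathrm{Out}(S_N)$ for $N\neq 6$ identifies $G$ with a point stabilizer. Your treatment is in fact slightly more careful than the paper's (explicit handling of the small cases and of why the kernel is trivial), but the argument is the same.
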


\begin{proof}
Let us assume $N > 4$, the other cases can be done by hand. Consider the translation action of $S_{N}$ on the coset space $S_{N}/G$. Since the quotient has cardinality $N$, the action comes from a group homomorphism
\begin{equation*}
\varphi : S_{N}\longrightarrow S_{\vert S_{N}/G\vert} = S_{N}.
\end{equation*}
The only normal subgroup of $S_{N}$ is the alternating group $A_{N}$, which has index $2$ hence cannot be contained in $\ker(\varphi)$. It follows that $\varphi$ is injective, hence surjective. The map $\varphi$ sends $G$ to the stabilizer of a class so that the conclusion holds if $\varphi$ is inner. As is well-known, any automorphism of $S_{N}$ is indeed inner, as soon as $N\neq 6$, hence the result.
\end{proof}

Let us now construct the first part of our fundamental domain.

\begin{prop}\label{prop:fundamentaldomain}
Assume $N\neq 6$. Then, there is an open set $Y'\subset X$ consisting in exactly one point of any non-trivial orbit. Moreover, all the points in $Y'$ have the same stabilizer under the restricted action of $S_{N}$.
\end{prop}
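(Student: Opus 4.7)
The plan is to partition $X\setminus Z$ according to $S_{N}$-stabilizers and take one piece as $Y'$. For each $i\in\{1,\dots,N\}$ I would introduce
\begin{equation*}
X_{i} = \{x\in X\setminus Z \mid \Stab(x) = \Stab_{S_{N}}(i)\}.
\end{equation*}
The assumption $N\neq 6$ enters here through Lemma \ref{lem:indexNsubgroups}: by Theorem \ref{thm:ergodic} every $x\in X\setminus Z$ has an $S_{N}$-orbit of cardinality $N$, so its stabilizer is a subgroup of index $N$ in $S_{N}$ and must therefore equal $\Stab_{S_{N}}(i)$ for some $i$. The $X_{i}$ thus form a partition of $X\setminus Z$.

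The core step is to prove that each $X_{i}$ is open, which I would do point-wise using Lemma \ref{lem:localsection}. Fix $x\in X_{i}$. By Proposition \ref{prop:Zclosed} the complement $X\setminus Z$ is open, so the neighborhood $U_{x}$ furnished by Lemma \ref{lem:localsection} may be shrunk so that $U_{x}\subset X\setminus Z$. For any $y\in U_{x}$ and any $\sigma\in\Stab(y)$, the identity $y=\sigma(y)\in U_{x}\cap\sigma(U_{x))}$ combined with clause (2) of Lemma \ref{lem:localsection} forces $\sigma\in\Stab(x)$. Hence $\Stab(y)\subset\Stab(x)$, and as both subgroups have cardinality $(N-1)!$ (since $x,y$ are both non-fixed), they must coincide. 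This shows $U_{x}\subset X_{i}$, so that $X_{i}$ is open.

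It then suffices to set $Y' = X_{1}$ and verify the properties. By construction every point of $Y'$ has stabilizer $\Stab_{S_{N}}(1)$, which gives the second assertion at once. For the first, given a non-trivial orbit $\O$, pick any $x_{0}\in\O$ with $\Stab(x_{0}) = \Stab_{S_{N}}(j)$; then
\begin{equation*}
\Stab(\sigma(x_{0})) = \sigma\Stab_{S_{N}}(j)\sigma^{-1} = \Stab_{S_{N}}(\sigma(j)),
\end{equation*}
so as $\sigma$ runs over a set of coset representatives the $N$ points of $\O$ realize the $N$ distinct subgroups $\Stab_{S_{N}}(1),\dots,\Stab_{S_{N}}(N)$ each exactly once. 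In particular $\vert Y'\cap\O\vert = 1$.

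The only genuinely delicate input is Lemma \ref{lem:indexNsubgroups}: without the identification of index-$N$ subgroups of $S_{N}$ with point stabilizers, neither the canonical partition of $X\setminus Z$ into the $X_{i}$ nor the coherent labelling of orbit points by $\{1,\dots,N\}$ is available. The remaining ingredients—Proposition \ref{prop:Zclosed}, the local slice construction of Lemma \ref{lem:localsection}, and the equality of orbits for $S_{N}$ and $S_{N}^{+}$—are all already at hand, so this is the one spot where the hypothesis $N\neq 6$ is used and where the argument would break down otherwise.
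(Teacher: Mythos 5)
Your proof is correct, and it reaches the conclusion by a genuinely different route than the paper. The paper applies Zorn's lemma to the family of neighborhoods produced by Lemma \ref{lem:localsection}, extracts a maximal element $W_{x}$, and then shows by a maximality argument (enlarging $W_{x}$ by a suitable $W_{y}$, after first replacing $y$ by the point of $\O(y)$ whose stabilizer equals $\Stab(x)$) that $W_{x}$ meets every non-trivial orbit exactly once. You instead observe that Lemma \ref{lem:indexNsubgroups} canonically partitions $X\setminus Z$ into the sets $X_{i}$ of points with stabilizer $\Stab_{S_{N}}(i)$, and you use Lemma \ref{lem:localsection} only locally, to prove that each $X_{i}$ is open: any $y\in U_{x}$ and $\sigma\in\Stab(y)$ witness $\sigma(U_{x})\cap U_{x}\neq\emptyset$, hence $\Stab(y)\subset\Stab(x)$, and equality follows by comparing the orders of two index-$N$ subgroups. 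This is more elementary and more explicit: it avoids Zorn's lemma entirely and exhibits the slice canonically as $Y'=X_{1}$ (a posteriori the paper's maximal $W_{x}$ must coincide with your $X_{i}$, since both consist of one point per non-trivial orbit and $W_{x}\subset X_{i}$). Both arguments rest on the same two inputs, Lemmas \ref{lem:localsection} and \ref{lem:indexNsubgroups}, and the hypothesis $N\neq 6$ enters at exactly the same spot, as you correctly note. Two small points are worth making explicit: the shrinking of $U_{x}$ into $X\setminus Z$ is legitimate because $Z$ is $S_{N}$-invariant (and is in fact automatic, since your inclusion $\Stab(y)\subset\Stab(x)$ applied to a fixed point $y$ would give $S_{N}\subset\Stab(x)$, a contradiction); and the final counting step uses that the subgroups $\Stab_{S_{N}}(1),\dots,\Stab_{S_{N}}(N)$ are pairwise distinct, which holds for $N\geqslant 3$ and hence throughout the relevant range $N\geqslant 4$.
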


\begin{proof}
The set $\mathcal{V}_{x}$ of all open neighborhoods of $x$ satisfying the conditions of Lemma~\ref{lem:localsection} is partially ordered by inclusion and is inductive, thus by Zorn Lemma it has a maximal element $W_{x}$. We claim that $W_{x}$ satisfies the conclusion of the statement. To see this, let us first consider a fixed point $z\in Z$. If $z\in W_{x}$, then for $\sigma\in S_{N}\setminus\Stab(x)$, $z\in \sigma(W_{x})\cap W_{x}$ contradicting the assumption on $W_{x}$. Therefore, $W_{x}\cap Z = \emptyset$.

Let now $y\in X$ be a point which is not fixed. Then, $\Stab(y)$ has index $N$ in $S_{N}$, so that by Lemma~\ref{lem:indexNsubgroups} it is the stabilizer of a point in $\{1, \ldots, N\}$, in particular this is the stablizer of a point in $\mathcal O (x)$ under the restricted action. Therefore, up to replacing $y$ by another element of $\mathcal O (y)$, we may assume that $\Stab(x) = \Stab(y)$. This means that the open set $W = W_{x}\cup W_{y}$ is fixed by all the elements of $\Stab(x)$. Moreover, for $\sigma\notin\Stab(x)$,
\begin{equation*}
\sigma(W)\cap W = \left(\sigma(W_{x})\cap W_{y}\right)\cup\left(\sigma(W_{y})\cap W_{x}\right).
\end{equation*}
Let $w\in \sigma(W_{x})\cap W_{y}$ and let $\tau\in \Stab(x)$. Then, $\sigma\tau\sigma^{-1}(w) = w$ so that $\sigma\tau\sigma^{-1}(W_{y})\cap W_{y}\neq \emptyset$, and this forces
\begin{equation*}
\sigma\tau\sigma^{-1}\in \Stab(y) = \Stab(x).
\end{equation*}
In other words, $\Stab(x) = \sigma^{-1}\Stab(x)\sigma = \Stab(\sigma^{-1}(x))$. But this is impossible since $\sigma^{-1}(x)$ is another point of the orbit. A similar argument shows that $W_{x}\cap\sigma(W_{y}) = \emptyset$ as well.

Summing up, $W$ satisfies the assumptions of Lemma~\ref{lem:localsection} and by maximality we have $W = W_{x}$. As a consequence, $y\in W_{x}$ and $W_{x}$ contains at least one point of every orbit, and all these points have the same stabilizer. If now $y\in W_{x}$ and $y'\in \O(y)\setminus\{y\}$, any $\sigma\in S_{N}$ such that $\sigma(y) = y'$ is out of $\Stab(x) = \Stab(y)$. As a consequence, $\sigma(W_{x})\cap W_{x} = \emptyset$, leading to $y'\notin W_{x}$. In other words, $W_{x}$ contains exactly one point of any non-trivial orbit.
\end{proof}

To produce a fundamental domain, one simply has to reunite $Y'$ with the fixed points set $Z$. The key fact is that this space is topologically nice because it is compact.

\begin{lem}
Assume $N\neq 6$. The set $\widetilde{Y} = Y'\cup Z$ is closed (hence compact) in $X$.
\end{lem}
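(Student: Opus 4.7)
My plan is to show that the complement $X \setminus \widetilde{Y}$ is open by producing, around each of its points, an explicit open neighborhood built by translating $Y'$ with an element of $S_{N}$. All the machinery needed is already contained in Proposition \ref{prop:fundamentaldomain}: $Y'$ is open in $X$, disjoint from $Z$, meets every non-trivial orbit in exactly one point, and all its points share the same stabilizer in $S_{N}$.

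Fix $x\in X\setminus\widetilde{Y}$. Since $x\notin Z$, its orbit is non-trivial, hence meets $Y'$ in exactly one point $y$, and since $x\notin Y'$ we have $x\neq y$. Choose $\sigma\in S_{N}$ with $\sigma(y)=x$; then $\sigma\notin\Stab(y)$. The set $V\coloneqq \sigma(Y')$ is open (as $S_{N}$ acts by homeomorphisms) and contains $x$. The key step is to check that $V\cap\widetilde{Y}=\emptyset$, which I would split in two.

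First, I would verify $V\cap Y' = \emptyset$. Any element of $V\cap Y'$ is of the form $\sigma(y'')$ with $y''\in Y'$, and since $\sigma(y'')$ and $y''$ lie in the same non-trivial orbit and both belong to $Y'$, the uniqueness of representatives forces $\sigma(y'')=y''$, i.e.\ $\sigma\in\Stab(y'')$. Because all points of $Y'$ share the same stabilizer, this gives $\sigma\in\Stab(y)$, a contradiction. Second, I would verify $V\cap Z=\emptyset$: if $z=\sigma(y'')\in Z$ for some $y''\in Y'$, then $\sigma(z)=z$ forces $y''=\sigma^{-1}(z)=z$, so $y''\in Y'\cap Z$, contradicting the fact (recorded in the proof of Proposition \ref{prop:fundamentaldomain}) that $Y'\cap Z=\emptyset$.

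Combining these two observations, $V$ is an open neighborhood of $x$ contained in $X\setminus\widetilde{Y}$, so the complement of $\widetilde{Y}$ is open and $\widetilde{Y}$ is closed; as $X$ is compact, $\widetilde{Y}$ is compact. There is no genuine difficulty here — the entire argument reduces to exploiting that $Y'$ is a fundamental domain for the non-trivial orbits with uniform stabilizer, so the only thing to be careful about is correctly handling the intersection with the fixed-point set $Z$, which is where the disjointness $Y'\cap Z=\emptyset$ is crucial.
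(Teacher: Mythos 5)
Your proof is correct and is essentially the paper's own argument viewed from the complement: where the paper takes a hypothetical point of $\overline{Y}'\setminus(Y'\cup Z)$ and derives a contradiction from $\sigma(U)\cap Y'\subset \sigma(Y')\cap Y'$ being non-empty, you exhibit the translate $\sigma(Y')$ directly as an open neighborhood of a point outside $\widetilde{Y}$ that misses both $Y'$ and $Z$, relying on the same dichotomy ($\sigma(Y')\cap Y'\neq\emptyset$ forces $\sigma$ to lie in the common stabilizer). A minor bonus of your version is that it does not need to invoke Proposition \ref{prop:Zclosed}, since you verify $\sigma(Y')\cap Z=\emptyset$ by hand.
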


\begin{proof}
The closure of $\widetilde{Y}$ is $\overline{Y}'\cup Z$ by Proposition~\ref{prop:Zclosed}. Let $x\in \overline{Y}'$ and assume that $x\notin Y'\cup Z$. In particular, its orbit is non-trivial, hence $Y'$ contains an element $x'\in \O(x)$. Let $\sigma\in S_{N}$ be such that $\sigma(x) = x'$ and let $U$ be an open neighborhood of $x$ such that $\sigma(U)\subset Y'$. Then,
\begin{equation*}
\sigma(U)\cap Y'\subset \sigma(Y')\cap Y'
\end{equation*}
and since the left-hand side is non-empty, we have $\sigma\in \Stab(x)$ by definition of $Y'$. Thus $x = x'\in Y'$, a contradiction.
\end{proof}

\subsection{Classification of arbitrary classical actions}

We are now ready to prove our main result, describing all actions of $S_{N}^{+}$ on compact Hausdorff spaces. To make the statement clear, we first recall the example of non-trivial classical action of $S_{N}^{+}$ construced by H. Huang in \cite{huang2013faithful}.

Let $Y$ be a compact Hausdorff space and let $Z\subset Y$ be closed. The disjoint union $Y^{\coprod N}$ of $N$ copies of $Y$ can be endowed with an action $\widetilde{\alpha}$ of $S_{N}^{+}$ through the identification
\begin{equation*}
C(Y^{\coprod N})\cong C(Y)\otimes \C^{N},
\end{equation*}
the action being trivial on the first tensorand and standard on the second one. One then proves (see \cite[Prop 3.5]{huang2013faithful}) that if $\xi\in \C^{N}$ denotes the all-one vector, then the C*-subalgebra
\begin{equation*}
A = \{f\in C(Y)\otimes \C^{N}\cong C(Y, \C^{N}) \mid f(Z)\subset \C\xi\}
\end{equation*}
is invariant under the action. As a consequence, its spectrum is a compact Hausdorff space with a non-trivial (if $Z\neq Y$) action of $S_{N}^{+}$. It is easy to see that this space is just the quotient of $Y^{\coprod N}$ where all the copies of $Z$ are identified pointwise.

\begin{rem}
If $Y$ is connected, then so is the spectrum of $A$, therefore providing a non-trivial action on a classical connected space. This was the purpose of Huang's construction.
\end{rem}

Let us say that an action of $S_{N}^{+}$ is \emph{of Huang type} if it is equivariantly isomorphic to one obtained through the construction above. We can now state and prove the main result of this work.

\begin{thm}\label{thm:allactionsSn+}
Any non-trivial action of $S_{N}^{+}$ on a compact Hausdorff space $X$ is of Huang type.
\end{thm}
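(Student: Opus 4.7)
The plan is to identify $X$ topologically with the Huang-type quotient space built from a fundamental domain, and then verify that the action $\alpha$ is intertwined with the Huang action under this identification.

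First, I use Proposition \ref{prop:fundamentaldomain} to fix the fundamental domain $\widetilde Y = Y' \cup Z$, where all points of $Y'$ share a common stabilizer $H < S_N$. Choosing coset representatives $\sigma_1 = e, \sigma_2, \ldots, \sigma_N$ of $H$ in $S_N$, define the continuous map
\begin{equation*}
\Phi : \widetilde Y \times \{1, \ldots, N\} \to X, \qquad (y, i) \mapsto \sigma_i(y).
\end{equation*}
The defining properties of $\widetilde Y$ together with the disjointness of the cosets $\sigma_i H$ imply that $\Phi$ is surjective (every orbit meets $\widetilde Y$), that the images $\sigma_i(Y')$ are pairwise disjoint (a common point of $\sigma_i(Y')$ and $\sigma_j(Y')$ would force $\sigma_j^{-1}\sigma_i \in H$ by uniqueness of orbit representatives in $Y'$), and that $\Phi$ glues the $N$ copies of $Z$ pointwise since $Z$ consists of fixed points. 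Hence $\Phi$ descends to a continuous bijection $\bar\Phi : \widetilde X \to X$, where $\widetilde X$ is the quotient of $N$ disjoint copies of $\widetilde Y$ by pointwise identification of the copies of $Z$, i.e.\ the Huang-type space for $(\widetilde Y, Z)$. Since $\widetilde X$ is compact and $X$ is Hausdorff, $\bar\Phi$ is a homeomorphism, so $C(X) \cong A$ via the induced $\ast$-isomorphism $\varphi(f)(y, j) = f(\sigma_j(y))$.

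The second step, which is the main one, is to verify equivariance: $(\varphi \otimes \id) \circ \alpha = \alpha_{\mathrm{Huang}} \circ \varphi$. Evaluating both sides at $(y, j) \in \widetilde Y \times \{1, \ldots, N\}$, the Huang formula yields
\begin{equation*}
\alpha_{\mathrm{Huang}}(\varphi(f))(y, j) = \sum_{i=1}^{N} f(\sigma_i(y))\, p_{ji},
\end{equation*}
while the left-hand side is $\alpha(f)(\sigma_j(y))$, which should be computed via the restriction of $\alpha$ to $C(\O(y))$. By Theorem \ref{thm:ergodic} this restriction is the standard $S_N^+$-action, but Theorem \ref{thm:ergodic} provides the intertwining isomorphism $C(\O(y)) \cong \C^N$ only up to a choice. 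The main obstacle is reconciling the labeling: the natural bijection $\sigma_i(y) \leftrightarrow i$ dictated by the coset representatives may a priori differ from the intertwining isomorphism supplied by the proof of Lemma \ref{lem:standard}. To resolve this, I would observe that our labeling is $S_N$-equivariant by the choice of the $\sigma_i$, and that any two $S_N$-equivariant $\ast$-algebra isomorphisms $C(\O(y)) \cong \C^N$ differ by an $S_N$-equivariant $\ast$-automorphism of $\C^N$, i.e.\ a permutation of coordinates lying in the center of $S_N$, which is trivial for $N \geq 3$. So the two labelings agree, both formulas match, and $\alpha$ coincides with $\alpha_{\mathrm{Huang}}$ through $\varphi$.

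Finally, the hypothesis $N \neq 6$ in Proposition \ref{prop:fundamentaldomain} is used only to ensure via Lemma \ref{lem:indexNsubgroups} that $\Stab(y) < S_N$ is a standard point stabilizer. But for the present argument this conclusion is automatic from Lemma \ref{lem:standard}: since the restricted $S_N^+$-action on any non-trivial orbit $\O(y)$ is standard, its restriction to $S_N$ is an $S_N$-equivariant copy of the standard action on $\{1, \ldots, N\}$, and hence $\Stab(y)$ is a standard point stabilizer for any $N \geq 4$. Thus the fundamental-domain construction and the equivariance argument both apply uniformly, proving the theorem for all $N \geq 4$.
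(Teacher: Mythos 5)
Your construction of the fundamental domain, the continuous bijection $\bar{\Phi}$ from the Huang-type quotient onto $X$ (with a compact-to-Hausdorff argument in place of the paper's openness argument), and the orbit-wise verification of equivariance all follow the paper's own proof; your explicit resolution of the labeling ambiguity via the triviality of the centre of $S_{N}$ is a reasonable way of making precise what the paper leaves implicit. The problem is your final paragraph.

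You claim that the hypothesis $N\neq 6$ in Proposition \ref{prop:fundamentaldomain} can be dropped because Lemma \ref{lem:standard} already forces $\Stab(y)$ to be a standard point stabilizer. This does not follow. The proof of Lemma \ref{lem:standard} identifies the orbit action with the standard one only up to a quantum group automorphism $\pi$ of $C(S_{N}^{+})$: it produces the pair $(\id_{\C^{N}},\pi)$, not a permutation of coordinates of $\C^{N}$ intertwining the two coactions over a \emph{fixed} copy of $C(S_{N}^{+})$. The automorphism of the classical subgroup $S_{N}$ induced by $\pi$ is therefore only known to be an automorphism; for $N\neq 6$ it is automatically inner --- which is exactly the content of Lemma \ref{lem:indexNsubgroups} --- but for $N=6$ it could a priori be the outer automorphism of $S_{6}$. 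In that case the restricted $S_{6}$-action on the orbit would be the exotic transitive action on six points, and $\Stab(y)$ would be a transitive index-$6$ copy of $S_{5}$ rather than a point stabilizer, so the fundamental-domain construction (which needs all stabilizers of points of $Y'$ to lie in a single conjugacy class of point stabilizers) would break down. Your inference ``the $S_{N}^{+}$-action on the orbit is standard, hence its restriction to $S_{N}$ is an $S_{N}$-equivariant copy of the standard action'' is thus circular at precisely the point where the paper is careful. The case $N=6$ can be repaired --- either by the paper's device of restricting along $C(S_{6}^{+})\to C(S_{5}^{+})$ and invoking the already-established case $N=5$, or by noting that $B_{1}$ restricted to $S_{6}$ carries the standard $5$-dimensional representation and not its image under the outer automorphism, so the permutation character of $S_{6}$ on the orbit is the standard one and the stabilizer must be a point stabilizer --- but some such argument must be supplied; as written, your proof does not cover $N=6$.
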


\begin{proof}
We first assume $N\neq 6$. Let $x\in \widetilde{Y}$ and observe that since the action of $S_{N}^{+}$ on $\O(x)$ is standard, so is the action of $S_{N}$. We can therefore, if $x$ is not fixed, label the points of $\O(x)$ as $\{x_{1}, \ldots, x_{N}\}$ in such a way that $\sigma(x_{i}) = x_{\sigma^{-1}(i)}$. There is then by definition a unique $1\leqslant i_{0}\leqslant N$ such that $x_{i_{0}} = x$. Moreover, if $x'\in \widetilde{Y}$ is another point which is not fixed, then it was shown in the proof of Proposition~\ref{prop:fundamentaldomain} that $\Stab(x) = \Stab(x')$. This means that $x' = x'_{i_{0}}$.

Denoting by $\tau_{(ab)}$ the transposition $(ab)$, let us set, for $1\leqslant i\leqslant N$, $\pi_{i} = \tau_{(i_{0}i)\mid \widetilde{Y}} : \widetilde{Y}\to X$ and denote by $Y_{j}$ its range. We can then define a continuous surjection
\begin{equation*}
\widetilde{\pi} : \widetilde{Y}^{\coprod N}\to X
\end{equation*}
by simply setting it to be $\pi_{i}$ on the $i$-th copy of $\widetilde{Y}$. Obviously, $\pi_{i}(z) = \pi_{j}(z)$ for any $z\in Z$ and $1\leqslant i, j\leqslant N$, so that denoting by $\widetilde{X}$ the quotient of of $Y^{\coprod N}$ obtained by identifying all the copies of $Z$ point-wise, $\widetilde{\pi}$ factors through $\pi : \widetilde{X} \to X$. If $q : \widetilde{Y}^{\coprod N} \to \widetilde{X}$ denotes the quotient map, and if $U\subset \widetilde{X}$ is open, then
\begin{equation*}
\pi(U) = \widetilde{\pi}(q^{-1}(U))
\end{equation*}
is open, so that $\pi$ is an open map. Since it is also a bijection, we conclude that it is an homeomorphism. Its inverse yields an isomorphism
\begin{equation*}
\Phi : C(X)\cong C(\widetilde{X}) = \{f\in C(\widetilde{Y})\otimes \C^{N} \mid f(Z)\subset \C\xi\},
\end{equation*}
and all that is left is to prove that $\Phi$ is equivariant, in the sense that
\begin{equation*}
(\Phi\otimes \id)\circ\alpha = (\id\otimes \widetilde{\alpha})\circ\Phi,
\end{equation*}
where $\widetilde{\alpha}$ denotes the Huang action on $C(\widetilde{X})$.

To prove equivariance, we will use once again the orbit structure. If $\pi_{\O(x)} : C(X)\to C(\O(x))$ is the restriction map, then we can identify $C(\O(x))$ with $\C$ or $\C^{N}$ by writing (all $x_{i}$'s are equal to $x$ if the latter is a fixed point)
\begin{equation*}
\pi_{\O(x)}(f) = \sum_{i=1}^{N}f(x_{i})e_{i}.
\end{equation*}
Since we know by Lemma~\ref{lem:standard} that the action of $S_{N}^{+}$ is also standard, and that it restricts to the standard action of $S_{N}$, we conclude that the restricted action $\alpha_{x}$ of $S_{N}^{+}$ on $\O(x)$ is given by the formula
\begin{equation*}
\alpha_{x}\circ\pi_{\O(x)}(f) = \sum_{i=1, j}^{N}f(x_{i})e_{j}\otimes u_{ji}.
\end{equation*}

If $\widetilde{x} = \pi^{-1}(x)$ and $\widetilde{x}_{i} = \tau_{(i_{0}i)}(\widetilde{x})$, then by construction the map $\Phi_{x} : \O(\widetilde{x})\to \O(x)$ sending $\widetilde{x}_{i}$ to $x_{i}$ is $S_{N}$-equivariant. That is, $\alpha_{\widetilde{x}}\circ\Phi_{x} = (\Phi_{x}\otimes \id)\circ\alpha_{x}$. Observing moreover that $\pi_{\O(\widetilde{x})}\circ\Phi = \Phi_{x}\circ\pi_{\O(x)}$, we can now compute
\begin{align*}
\left(\pi_{\O(\widetilde{x})}\otimes \id\right)\circ\left(\Phi\otimes\id\right)\circ\alpha & = (\Phi_{x}\otimes \id)\circ(\pi_{\O(x)}\otimes \id)\circ \alpha \\
& = (\Phi_{x}\otimes \id)\circ\alpha_{x}\circ\pi_{\O(x)} \\
& = \alpha_{\widetilde{x}}\circ\Phi_{x}\circ\pi_{\O(x)} \\
& = \alpha_{\widetilde{x}}\circ\pi_{\O(\widetilde{x})}\circ\Phi \\
& = \left(\pi_{\O(\widetilde{x})}\otimes \id\right)\circ\alpha\circ\Phi
\end{align*}
In particular, for any $f\in C(X)$,
\begin{equation*}
(\mathrm{ev}_{\widetilde{x}}\otimes \id)\left[(\Phi\otimes\id)\circ\alpha(f)\right] = (\mathrm{ev}_{\widetilde{x}}\otimes \id)\left[\alpha\circ\Phi(f)\right].
\end{equation*}
The elements in brackets can be seen as functions from $\widetilde{X}$ to $C(S_{N}^{+})$. We have proven that they coincide when evaluated at $\widetilde{x}$, for any $\widetilde{x}\in \widetilde{X}$, hence they are equal. Since $f$ was arbitrary, the proof is complete.

If now $N = 6$, consider the surjection $\pi_{5} : C(S_{6}^{+})\to C(S_{5}^{+})$ sending $u_{66}$ to $1$. Then, $\alpha' = (\id\otimes \pi_{5})\circ\alpha$ is an action of $S_{5}^{+}$, hence we can write $X$ in the form $Y^{\coprod 5}/Z$ for some closed subset $Z$. Consider now the corresponding action of $S_{6}$ on an orbit of $X$. Restricting it to $S_{5}$ amounts to fixing a point. Therefore, $Z$ contains all the points which are fixed under $S_{6}$, and one point of each non-trivial orbit. In other words, Proposition~\ref{prop:fundamentaldomain} holds for $N = 6$. Since this was the only instance in the first part of the proof where we used the assumption $N\neq 6$, we can now conclude.
\end{proof}

\section{Further examples}\label{sec:further}

The quantum permutation groups belong to a class of compact quantum groups called \emph{easy quantum groups} and introduced in \cite{banica2009liberation}. The common point of these objects is that they are defined using the combinatorics of partitions of finite sets. As a consequence, they share many common features and it is quite natural to try to extend the preceding results to other examples in this class. We will here consider the ``free'' quantum groups, since they should exhibit some rigidity with respect to classical actions comparable to what happens for $S_{N}^{+}$. We will not give the general definition of easy quantum groups here, but rather work case-by-case.

\subsection{Free orthogonal quantum group}

The free orthogonal quantum groups $O_{N}^{+}$ were introduced by S.Z. Wang in \cite{wang1995free}, to which we refer for the definition. It turns out that classical actions are much easier to study in that case.

\begin{thm}\label{thm:ergodicon+}
The free orthogonal quantum group $O_{N}^{+}$ cannot act non-trivially on a compact Hausdorff space for $N\geqslant 3$.
\end{thm}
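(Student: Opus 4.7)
The plan is to follow the two-stage strategy used for $S_N^+$, adapted to the simpler fusion combinatorics of $O_N^+$. First, by Huang's theory of invariant subsets \cite{huang2016invariant}, any action $\alpha : C(X) \to C(X) \otimes C(O_N^+)$ restricts on each orbit $\O(x)$ to an ergodic action. If $\alpha$ is non-trivial then some orbit is non-trivial, so it is enough to prove that every ergodic action of $O_N^+$ on a commutative unital $C^*$-algebra is trivial.

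Assume then that $\alpha : C(Y) \to C(Y) \otimes C(O_N^+)$ is ergodic, with spectral decomposition $\mathcal{B} = \bigoplus_{k \geq 0} B_k$ and $B_0 = \C$. The goal is to show $B_k = 0$ for every $k \geq 1$. Fix such a $k$, suppose $B_k \neq 0$, and write $B_k \cong W_k \otimes V_k$ where $W_k$ is the multiplicity space. Using the $O_N^+$ fusion rule $\rho_k \XBox \rho_k \cong \rho_0 \oplus \rho_2 \oplus \cdots \oplus \rho_{2k}$, the multiplication map $m : B_k \otimes B_k \to \bigoplus_{j=0}^{k} B_{2j}$ is equivariant, and by Schur's lemma its component $m_{2j}$ factors as $\beta_{2j} \otimes P_{2j}$, where $P_{2j} : V_k \otimes V_k \to V_{2j}$ is the essentially unique equivariant projection and $\beta_{2j}$ is a linear map on the multiplicity space $W_k \otimes W_k$. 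Commutativity of $C(Y)$ translates into $m_{2j} \circ \sigma = m_{2j}$, where $\sigma$ is the vector swap on $B_k \otimes B_k$.

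The crux of the argument is the following technical claim, in the spirit of Lemma \ref{lem:ergodicactionspectrumk}: for $N \geq 3$ and $j \geq 1$, the projection $P_{2j}$ is non-zero on both the symmetric and the antisymmetric summand of $V_k \otimes V_k$ under the classical swap. This captures the genuinely quantum nature of $O_N^+$: when restricted to the classical subgroup $O_N$, the irreducible $V_{2j}^{O_N^+}$ splits into several $O_N$-irreducibles of both swap parities, whereas in the classical setting the image of $P_{2j}$ lies in a single parity. Granting the claim, the identity $m_{2j} \circ \sigma = m_{2j}$ forces $\beta_{2j}$ to be simultaneously symmetric and antisymmetric on $W_k \otimes W_k$ for $j \geq 1$, hence zero. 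Consequently $B_k \cdot B_k \subset B_0 = \C$.

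To finish, every irreducible of $O_N^+$ is self-conjugate, so $B_k$ is stable under the involution and contains a non-zero self-adjoint element $f$. Then $f^2$ is a non-negative real scalar, so $f$ takes only the values $\pm c$ for some $c \geq 0$; comparing two self-adjoint elements using the fact that $fg$ is a scalar then forces them to be $\R$-linearly dependent. Therefore $\dim_{\C} B_k \leq 1$, contradicting $\dim B_k \geq \dim V_k \geq N \geq 3$. The main obstacle is the parity claim for $P_{2j}$, which one expects to establish through a combinatorial analysis of the intertwiner spaces of $O_N^+$ in terms of non-crossing pair partitions (Temperley--Lieb diagrams); a careful verification is required for all admissible pairs $(j, k)$.
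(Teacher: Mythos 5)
Your overall architecture is sound: the reduction to the ergodic case via orbits and \cite{huang2016invariant} is exactly what the paper does, and your endgame (once $B_{k}\cdot B_{k}\subset\C 1$, self-adjoint elements of $B_{k}$ have scalar squares and scalar pairwise products, hence are $\R$-linearly dependent, contradicting $\dim B_{k}\geqslant\dim V_{k}\geqslant N\geqslant 3$) is correct. The gap is that the entire difficulty of the theorem has been concentrated into your unproven ``parity claim'': that for all $1\leqslant j\leqslant k$ the essentially unique intertwiner $P_{2j}\colon V_{k}\otimes V_{k}\to V_{2j}$ is non-zero on both the symmetric and the antisymmetric summand for the tensor flip. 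You verify nothing beyond plausibility: the case $k=j=1$ is easy, since $V_{2}$ is the orthocomplement of the flip-invariant duality vector $\sum_{i}e_{i}\otimes e_{i}$, but already the top component $j=k$ requires controlling the Jones--Wenzl projection restricted to $V_{k}\otimes V_{k}$ against the block flip, and the heuristic via restriction to $O_{N}$ is delicate because $V_{k}$ is itself reducible over $O_{N}$, so the distribution of the $O_{N}$-constituents of $V_{2j}$ between $\mathrm{Sym}^{2}$ and $\Lambda^{2}$ is not automatic. Note also that taking $k$ minimal with $B_{k}\neq\{0\}$ does not let you dodge the claim: minimality only kills the components $B_{2j}$ with $2j<k$, so you would still need it for all $j$ with $k\leqslant 2j\leqslant 2k$. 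As it stands the proposal is a program rather than a proof.

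The paper avoids this computation entirely, and the contrast is instructive. It imports from \cite{freslon2021tannaka} only the fact that for the minimal $k$ with $B_{k}\neq\{0\}$, the space $B_{0}\oplus B_{k}$ is a finite-dimensional C*-subalgebra, hence yields an action of $O_{N}^{+}$ on a finite classical space. By the universal property of $S_{d}^{+}$, such an action is of the form $(\id\otimes\pi)\circ\alpha_{d}$ for a $*$-homomorphism $\pi\colon C(S_{d}^{+})\to C(O_{N}^{+})$ intertwining the coproducts, whose image is therefore one of the dual quantum subgroups $\C$, $C(PO_{N}^{+})$ or $C(O_{N}^{+})$; the last two are not generated by projections (their abelianisations are function algebras on connected, non-trivial spaces), so the image is $\C$, the action on the finite space is trivial, and ergodicity forces $B_{k}=\{0\}$. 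If you want to complete your version, either carry out the parity claim by Temperley--Lieb combinatorics in the style of Lemma \ref{lem:ergodicactionspectrumk}, or replace it by this ``generated by projections'' obstruction, which is what actually makes $O_{N}^{+}$ easier than $S_{N}^{+}$.
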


\begin{proof}
Consider first an ergodic action of $O_{N}^{+}$ on a compact Hausdorff space $X$. As explained in the proof of \cite[Thm 6.7]{freslon2021tannaka}, the same arguments as for $S_{N}^{+}$ show that if $k$ is the least integer such that $B_{k}\neq \{0\}$, then $B_{0}\oplus B_{k}$ is a finite-dimensional C*-subalgebra. Therefore, there is a corresponding action of $O_{N}^{+}$ on a finite space of dimension $d$. The universal property of $S_{d}^{+}$ then implies that the action is of the form $(\id\otimes \pi)\circ\alpha_{d}$ for some $*$-homomorphism $\pi : C(S_{d}^{+})\to C(O_{N}^{+})$. Now, $\pi(C(S_{d}^{+}))$ is of the form $C(\HH)$ for some compact quantum group $\HH$, and it is well-known that $\HH$ is either trivial or equals $O_{N}^{+}$ or its projective version $PO_{N}^{+}$. But $C(\HH)$ is by definition generated by projections, hence cannot be equal to $C(O_{N}^{+})$ since the latter surjects onto $C(O_{N})$ and $O_{N}$ is not totally disconnected. Similarly, $C(PO_{N}^{+})$ is not generated by projections and we conclude that $\pi(C(S_{N}^{+})) = \C$. In other words, the action on $B_{0}\oplus B_{k}$ is trivial and ergodic, hence $B_{k} = \{0\}$ which by definition means that the original action is trivial.

Consider now an arbitrary action, let $x\in X$ and consider the restriction of the action to its orbit $\O(x)$. This is ergodic, hence by the first part of the proof, $\O(x) = \{x\}$, meaning that $\{x\}$ is fixed. Since $x$ was arbitrary, the action is trivial.
\end{proof}

\begin{rem}
For $N = 1$, we have $O_{1}^{+} = \Z_{2}$, which does have many actions on classical spaces, since these correspond to order $2$ homeomorphisms. For $N = 2$, $O_{2}^{+}$ is isomorphic to $SU_{-1}(2)$. There is an inclusion $C(SO(3))\subset C(SU_{-1}(2))$ such that $\D(C(SO(3)))\subset C(SO(3))\otimes C(SO(3))\subset C(SO(3))\otimes C(SU_{-1}(2))$, therefore yielding a classical action of $O_{2}^{+}$ on $SO(3)$. In other words, Theorem~\ref{thm:ergodicon+} fails for $N\leqslant 2$.
\end{rem}

We also get for free the same result for another easy quantum group associated to non-crossing partition.

\begin{cor}\label{cor:bistochastic}
The free bistochastic quantum groups $B_{N}^{+}$ cannot act non-trivially on a compact Hausdorff space for $N\geqslant 4$.
\end{cor}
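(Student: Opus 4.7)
The plan is to reduce the statement directly to Theorem \ref{thm:ergodicon+} via the well-known isomorphism $B_{N}^{+}\cong O_{N-1}^{+}$ of compact quantum groups. Recall that $B_{N}^{+}$ is defined by the universality of an $N$-dimensional bistochastic orthogonal matrix $w$. The bistochastic condition means that the all-one vector $\xi\in \C^{N}$ is fixed by $w$, so that $w$ splits as the direct sum of the trivial representation on $\C\xi$ and an $(N-1)$-dimensional orthogonal representation $v$ on $\xi^{\perp}$. A direct check (carried out for instance by Raum) shows that the coefficients of $v$ generate $C(B_{N}^{+})$ and satisfy exactly the defining relations of $C(O_{N-1}^{+})$, yielding a $*$-isomorphism which intertwines the coproducts.

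Given an action $\alpha : C(X)\to C(X)\otimes C(B_{N}^{+})$, one can therefore transport it through this isomorphism to an action $\alpha' : C(X)\to C(X)\otimes C(O_{N-1}^{+})$. Since $N\geqslant 4$ we have $N-1\geqslant 3$, and Theorem \ref{thm:ergodicon+} applies to show that $\alpha'$ is trivial. Hence $\alpha$ itself is trivial.

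If one prefers to avoid invoking the isomorphism as a black box, the proof of Theorem \ref{thm:ergodicon+} can be repeated verbatim for $B_{N}^{+}$: the fusion rules of $B_{N}^{+}$ are those of $O_{N-1}^{+}$, so the argument producing a finite-dimensional invariant subalgebra of the form $B_{0}\oplus B_{k}$ goes through, and the resulting map $C(S_{d}^{+})\to C(B_{N}^{+})$ must land in $\C$ because $C(B_{N}^{+})$ surjects onto $C(B_{N})\cong C(O_{N-1})$, a C*-algebra which is not totally disconnected for $N-1\geqslant 2$ and hence cannot be generated by projections (and similarly for its projective version). There is thus essentially no obstacle: the only thing to verify is that the passage from $S_{N}^{+}$ to $B_{N}^{+}$ preserves the two ingredients used, namely the fusion rules of type $O^{+}$ and the non-existence of non-trivial quotients whose underlying C*-algebra is generated by projections, both of which are standard.
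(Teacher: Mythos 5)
Your proposal is correct and follows exactly the paper's route: the paper's proof is the one-line observation that $B_{N}^{+}\cong O_{N-1}^{+}$, so the statement reduces to Theorem \ref{thm:ergodicon+} with $N-1\geqslant 3$. Your additional justification of the isomorphism and the sketched self-contained variant are fine but not needed for the argument.
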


\begin{proof}
This follows from Theorem~\ref{thm:ergodicon+} since $B_{N}^{+}$ is known to be isomorphic to $O_{N-1}^{+}$.
\end{proof}

Even though it is not an easy quantum group, we take to occasion to settle the case of the quantum automorphism group of $M_{N}(\C)$ preserving the canonical trace (see \cite{banica1999symmetries} for the definition), because it follows directly from what we have done.

\begin{cor}
The quantum automorphism group $\G$ of $(M_{N}(\C), \mathrm{tr})$ does not act non-trivially on a compact Hausdorff space for $N\geqslant 3$.
\end{cor}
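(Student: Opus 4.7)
The plan is to realize $\G$ as a quantum quotient of $O_N^+$ and then invoke Theorem~\ref{thm:ergodicon+}. The free orthogonal quantum group $O_N^+$ acts on $(M_N(\C),\mathrm{tr})$ by matrix conjugation through its fundamental representation $u$, via the formula $\beta(E_{ij})=\sum_{k,l}E_{kl}\otimes u_{ki}u_{jl}$; the trace is preserved because $u$ is orthogonal. The universal property of $\G$ therefore produces a Hopf $*$-homomorphism $\pi:C(\G)\to C(O_N^+)$ whose image is the Hopf subalgebra generated by the products $u_{ki}u_{jl}$, that is $C(PO_N^+)$. Banica's theorem identifies $\G$ with the projective version $PO_N^+$ for $N\geqslant 2$, hence $\pi$ is injective and embeds $C(\G)$ as a Hopf subalgebra of $C(O_N^+)$.

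Given an action $\alpha:C(X)\to C(X)\otimes C(\G)$ on a compact Hausdorff space $X$, I would then set $\tilde\alpha:=(\id\otimes\pi)\circ\alpha:C(X)\to C(X)\otimes C(O_N^+)$. Coassociativity is inherited from $\alpha$ since $\pi$ is a Hopf map. For the Podle\'s density condition of Definition~\ref{def:action}, one starts from $\overline{\spa}\{(1\otimes C(\G))\alpha(C(X))\}=C(X)\otimes C(\G)$; multiplying on the left by $1\otimes C(O_N^+)$ and using $1\in C(\G)$ so that $C(O_N^+)\cdot C(\G)=C(O_N^+)$, one obtains $\overline{\spa}\{(1\otimes C(O_N^+))\tilde\alpha(C(X))\}=C(X)\otimes C(O_N^+)$. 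Thus $\tilde\alpha$ is a genuine action of $O_N^+$ on $X$.

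Applying Theorem~\ref{thm:ergodicon+} with $N\geqslant 3$ forces $\tilde\alpha$ to be trivial, and the injectivity of $\pi$ transfers this to $\alpha(f)=f\otimes 1$ for every $f\in C(X)$, so $\alpha$ itself is trivial. The only external input is Banica's identification $\G\cong PO_N^+$, which is where any real obstacle lies; once it is granted, the argument is a short transfer of Theorem~\ref{thm:ergodicon+}. One could alternatively avoid this identification by mimicking the proof of Theorem~\ref{thm:ergodicon+} directly for $\G$: the ergodic reduction would produce a Hopf subalgebra of $C(\G)$ generated by projections, which cannot exist nontrivially because $C(\G)$ surjects onto $C(PU(N))$ and $PU(N)$ is connected for $N\geqslant 2$, hence not totally disconnected.
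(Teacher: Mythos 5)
Your proposal is correct and follows the same route as the paper: compose the action with the (well-known) quantum group embedding $C(\G)\hookrightarrow C(O_N^+)$ coming from the identification $\G\cong PO_N^+$, and apply Theorem~\ref{thm:ergodicon+}. You merely spell out details the paper leaves implicit (the origin of the embedding via the conjugation action and the verification of the Podle\'s density condition for the induced action), which is fine.
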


\begin{proof}
It is well-known that there is a quantum group embedding $\imath : C(\G)\hookrightarrow C(O_{N}^{+})$, so that any non-trivial action $\alpha : C(X)\to C(X)\otimes C(\G)$ induces a non-trivial action $(\id\otimes \imath)\circ\alpha$ of $O_{N}^{+}$, hence the result by Theorem~\ref{thm:ergodicon+}.
\end{proof}

Once again, the result fails for $N = 2$ since $\G$ is then the classical group $SO(3)$.

\subsection{Direct products}

The list of easy quantum groups associated to non-crossing partitions contains, besides the examples above, some direct products with the cyclic group of order two $\Z_{2}$. It is quite intuitive that the actions of a direct product of two compact quantum groups should correspond to pairs of actions of the factors which commute in a certain sense. Proposition~\ref{prop:directproduct} below makes this precise and, even though it is certainly well-known to experts, we give a proof for completeness. We start with a convenient definition:

\begin{de}
Two actions $\alpha$ and $\beta$ of two compact quantum groups $\G$ and $\HH$ respectively on the same C*-algebra $A$ are said to be \emph{commuting} if
\begin{equation*}
(\alpha\otimes \id)\circ\beta = (\id\otimes \Sigma)\circ(\beta\otimes\id)\circ\alpha
\end{equation*}
as maps from $A$ to $A\otimes C(\G)\otimes C(\HH)$, where $\Sigma$ denotes the flip map.
\end{de}

The result that we need is a correspondance between commuting pairs of actions and actions of the direct product. Recall that given two compact quantum groups $\G$ and $\HH$, their \emph{direct product} $\G\times \HH$ is the compact quantum group with function algebra $C(\G\times \HH) = C(\G)\otimes C(\HH)$ and with coproduct
\begin{equation*}
\D_{\G\times \HH} = (\id\otimes \Sigma\otimes \id)\circ\left(\Delta_{\G}\otimes \Delta_{\HH}\right)
\end{equation*}
(see \cite{wang1995tensor} for details). Assuming that we are working with the universal C*-algebras (which we will do hereafter since there is no loss of generality), the factors can be recovered as quantum subgroups through the maps
\begin{align*}
\pi_{\G} = (\id\otimes \varepsilon_{\HH}) : C(\G\times \HH)\to C(\G) \quad \text{ and } \quad \pi_{\HH} = (\varepsilon_{\G}\otimes \id) : C(\G\times \HH)\to C(\HH),
\end{align*}
where $\varepsilon$ denotes the counit. Notice that, using the equality $(\varepsilon\otimes \id)\circ\D = \id = (\id\otimes \varepsilon)\circ\D$, we get
\begin{align*}
(\pi_{\G}\otimes\pi_{\HH})\circ\D_{\G\times \HH} = \id_{C(\G\times \HH)}.
\end{align*}

\begin{prop}\label{prop:directproduct}
Let $\G$ and $\HH$ be two compact quantum groups and let $A$ be a C*-algebra. If $\alpha$ and $\beta$ are commuting actions of $\G$ and $\HH$ respectively on $A$, then
\begin{equation*}
\gamma = (\alpha\otimes\id)\circ\beta
\end{equation*}
is an action of $\G\times \HH$ on $A$. Conversely, if $\gamma$ is an action of $\G\times \HH$ on $A$, then $\alpha = (\id\otimes \pi_{\G})\circ\gamma$ and $\beta = (\id\otimes \pi_{\HH})\circ\gamma$ are commuting actions of $\G$ and $\HH$ respectively on $A$. Moreover, this is yields a one-to-one correspondence.
\end{prop}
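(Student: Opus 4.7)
My plan is to verify three statements in sequence: (i) the forward construction produces an action of $\G \times \HH$, (ii) the backward construction produces commuting actions of $\G$ and $\HH$, and (iii) the two constructions are mutually inverse. For the forward direction, the composition $\gamma = (\alpha \otimes \id) \circ \beta$ is plainly a unital $*$-homomorphism since $\alpha$ and $\beta$ are. The substantive point is the coaction identity
$$(\gamma \otimes \id) \circ \gamma = (\id \otimes \Delta_{\G \times \HH}) \circ \gamma,$$
where $\Delta_{\G \times \HH} = (\id \otimes \Sigma \otimes \id)(\Delta_\G \otimes \Delta_\HH)$. I would expand both sides into compositions of $\alpha$, $\beta$, $\Delta_\G$, $\Delta_\HH$ acting on successive tensor factors of $A \otimes C(\G)^{\otimes 2} \otimes C(\HH)^{\otimes 2}$, apply the individual coaction axioms for $\alpha$ and $\beta$, and use the commutativity hypothesis $(\alpha \otimes \id) \circ \beta = (\id \otimes \Sigma) \circ (\beta \otimes \id) \circ \alpha$ at precisely the spot where a $\beta$-factor and an $\alpha$-factor land out of order. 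Density for $\gamma$ combines the two density conditions: since elements of the form $(1 \otimes h)\beta(a)$ are dense in $A \otimes C(\HH)$ and $\alpha \otimes \id$ sends $(1 \otimes h)\beta(a)$ to $(1 \otimes 1 \otimes h)\gamma(a)$, left-multiplication by $1 \otimes g \otimes 1$ followed by density for $\alpha$ produces all of $A \otimes C(\G) \otimes C(\HH)$ in the closed linear span.

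For the backward direction, given $\gamma$, the maps $\alpha = (\id \otimes \pi_\G) \circ \gamma$ and $\beta = (\id \otimes \pi_\HH) \circ \gamma$ are unital $*$-homomorphisms. The coaction axiom for $\alpha$ follows by applying $\id_A \otimes \pi_\G \otimes \pi_\G$ to the coaction axiom for $\gamma$ and invoking the elementary identity $(\pi_\G \otimes \pi_\G) \circ \Delta_{\G \times \HH} = \Delta_\G \circ \pi_\G$, which follows directly from the formula for $\Delta_{\G \times \HH}$ and the counit property on $C(\HH)$; density for $\alpha$ is obtained by applying $\id \otimes \pi_\G$ to the density condition for $\gamma$. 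To establish the commutativity of $\alpha$ and $\beta$, one computes
$$(\alpha \otimes \id) \circ \beta = (\id \otimes \pi_\G \otimes \pi_\HH) \circ (\gamma \otimes \id) \circ \gamma = (\id \otimes (\pi_\G \otimes \pi_\HH) \circ \Delta_{\G \times \HH}) \circ \gamma = \gamma,$$
using the coaction axiom for $\gamma$ together with the counit identity $(\pi_\G \otimes \pi_\HH) \circ \Delta_{\G \times \HH} = \id$. A symmetric computation, exploiting instead that $(\pi_\HH \otimes \pi_\G) \circ \Delta_{\G \times \HH}$ is the flip $\Sigma$, gives $(\id \otimes \Sigma) \circ (\beta \otimes \id) \circ \alpha = \gamma$ as well, so both sides of the commuting relation coincide with $\gamma$.

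These same identities drive the bijection. Starting from a commuting pair $(\alpha, \beta)$, forming $\gamma$, and extracting the pair back, one recovers $(\id \otimes \pi_\G) \circ \gamma = (\id \otimes \id \otimes \varepsilon_\HH) \circ (\alpha \otimes \id) \circ \beta = \alpha \circ [(\id \otimes \varepsilon_\HH) \circ \beta] = \alpha$ using the counit identity for $\beta$, and symmetrically for $\beta$. Starting from $\gamma$ and forming $(\alpha, \beta)$, the boxed computation above already shows $(\alpha \otimes \id) \circ \beta = \gamma$. The only step that I expect to require real care is the coaction identity in the forward direction: keeping track of the five tensor factors and identifying the unique spot at which the commutativity hypothesis is applied demands some bookkeeping, but there is no conceptual obstruction, since every other step is either a counit identity or an immediate consequence of the coaction axioms for $\alpha$ and $\beta$.
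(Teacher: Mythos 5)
Your proposal is correct and follows essentially the same route as the paper: the forward coaction identity by direct expansion of the five tensor factors with the commutation hypothesis inserted at the unique out-of-order spot, and the converse together with the bijection via the identities $(\pi_{\G}\otimes \pi_{\HH})\circ\D_{\G\otimes\HH} = \id$ and $(\pi_{\HH}\otimes \pi_{\G})\circ\D_{\G\otimes\HH} = \Sigma$. The only (minor) divergence is in the density condition for $\gamma$, which you verify directly from the two Podle\'s conditions, whereas the paper deduces it from the counit criterion $(\id\otimes\varepsilon_{\G}\otimes\varepsilon_{\HH})\circ\gamma = \id_{A}$ via \cite[Lem 2.13]{de2010actions}; both arguments are valid.
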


\begin{proof}
Let $\alpha$ and $\beta$ be commuting actions of $\G$ and $\HH$ respectively. We have to prove that $\gamma = (\alpha\otimes\id)\circ\beta$ is an action. Condition \eqref{cond:coaction} in Definition~\ref{def:action} follows from an elementary computation:
\begin{align*}
(\id\otimes \Delta_{\G\times \HH})\circ\gamma & = (\id\otimes \id\otimes \Sigma\otimes \id)\circ(\id\otimes \Delta_{\G}\otimes \Delta_{\HH})\circ(\alpha\otimes \id)\circ\beta \\
& = (\id\otimes\id\otimes \Sigma\otimes \id)\circ(\alpha\otimes \id\otimes \id\otimes \id)\circ(\alpha\otimes \id\otimes \id)\circ(\beta\otimes \id)\circ\beta \\
& = (\alpha\otimes \Sigma\otimes \id\otimes \id)\circ(\alpha\otimes \id\otimes \id)\circ(\beta\otimes \id)\circ\beta \\
& = (\alpha\otimes \id\otimes \id\otimes \id\otimes \id)\circ\left(\left[(\id\otimes \Sigma)\circ(\alpha\otimes \id)\circ\beta\right]\otimes \id\right)\circ\beta \\
& = (\alpha\otimes \id\otimes \id\otimes \id\otimes \id)\circ\left(\left[(\beta\otimes \id)\circ\alpha\right]\otimes \id\right)\circ\beta \\
& = (\alpha\otimes \id\otimes \id\otimes \id)\circ(\beta\otimes \id\otimes \id)\circ(\alpha\otimes \id)\circ\beta \\
& = (\gamma\otimes \id)\circ\gamma.
\end{align*}
To see that Condition \eqref{cond:density} holds, simply observe that $(\id\otimes \varepsilon_{\G}\otimes \varepsilon_{\HH})\circ\gamma = \id_{A}$, and this is known to imply Condition \eqref{cond:density} (see for instance the proof of \cite[Lem 2.13]{DC17}).

Conversely, let $\gamma$ be an action of $\G\times \HH$. By construction, $\alpha$ and $\beta$ are actions of $\G$ and $\HH$ respectively. Moreover, using the equality $(\varepsilon\otimes \id)\circ\D = \id = (\id\otimes \varepsilon)\circ\D$, we get
\begin{align*}
(\alpha\otimes \id)\circ\beta & = (\id\otimes \pi_{\G}\otimes \id)\circ(\gamma\otimes \id)\circ(\id\otimes \pi_{\HH})\circ\gamma \\
& = (\id\otimes \pi_{\G}\otimes \pi_{\HH})\circ(\gamma\otimes \id)\circ\gamma \\
& = (\id\otimes \pi_{\G}\otimes \pi_{\HH})\circ(\id\otimes \Delta_{\G\times \HH})\circ\gamma \\
& = \gamma.
\end{align*}
A similar calculation yields $(\id\otimes \Sigma)\circ(\beta\otimes \id)\circ\alpha = \gamma$, hence $\alpha$ and $\beta$ are commuting.
\end{proof}

Let us now go back to easy quantum groups and consider two more examples. As one may expect, any action commutes with the trivial action. Therefore, an action of $B_{N}^{\prime +} = B^{+}_{N}\times \Z_{2}$ on a classical compact space is the same by Corollary~\ref{cor:bistochastic} as an action of $\Z_{2}$, that is to say an order two homeomorphism. The other case is $S_{N}^{\prime +} = S_{N}^{+}\times \Z_{2}$. Using our results, we can once again describe them completely.

\begin{prop}
The actions of $S_{N}^{\prime +}$ on classical compact spaces are exactly the actions of Huang type on $Y^{\coprod N}/Z$, where $Y$ is moreover equipped with an action of $\Z_{2}$ which fixes $Z$ globally.
\end{prop}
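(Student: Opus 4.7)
The plan is to combine the two structural tools at our disposal. First, Proposition \ref{prop:directproduct} tells us that an action of $S_{N}^{\prime +} = S_{N}^{+}\times \Z_{2}$ on $C(X)$ is the same data as a commuting pair $(\alpha,\beta)$ of actions of $S_{N}^{+}$ and $\Z_{2}$ on $C(X)$. Since $\Z_{2}$ is a classical discrete group, $\beta$ is dual to an order-two homeomorphism $\tau : X \to X$, and the commutation condition becomes the statement that $\tau$ is $\alpha$-equivariant, in the sense that $\alpha\circ \tau^{*} = (\tau^{*}\otimes \id)\circ \alpha$. Second, Theorem \ref{thm:allactionsSn+} applied to $\alpha$ produces (assuming $\alpha$ is non-trivial, the trivial case being covered by taking $Z = Y$) a pair $(Y, Z)$ and an equivariant isomorphism $X \cong Y^{\coprod N}/Z$ under which $\alpha$ is of Huang type. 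The problem thus reduces to classifying the $\alpha$-equivariant order-two self-homeomorphisms $\tau$ of this Huang space.

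The key step is to show that $\tau$ must act ``diagonally'', i.e.\ come from an order-two homeomorphism $\tau_{0}$ of $Y$ preserving $Z$. Equivariance forces $\tau$ to preserve the fixed-point set of $\alpha$, which is exactly the image of $Z$ in $X$; this already handles the fixed points. For a non-trivial orbit $O_{y} = \{[y,1],\ldots,[y,N]\}$ with $y \in Y\setminus Z$, $\tau(O_{y})$ is again a full orbit $O_{y'}$, and after labelling both $O_{y}$ and $O_{y'}$ by $\{1,\ldots,N\}$ via the standard $S_{N}$-action, $\tau\vert_{O_{y}}$ is described by a single permutation $\pi_{y}\in S_{N}$. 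Applying equivariance with respect to the classical subgroup $S_{N}\subset S_{N}^{+}$ gives $\pi_{y}\sigma = \sigma\pi_{y}$ for every $\sigma\in S_{N}$, so $\pi_{y}\in Z(S_{N}) = \{e\}$ thanks to $N\geqslant 3$. I expect this centralizer argument to be the main obstacle, and it is the reason why the $\Z_{2}$ part cannot mix different sheets.

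Once the diagonal structure is secured, I define $\tau_{0}(y)$ to be the $Y$-component of $\tau([y, 1])$. Its continuity follows from the observation that each sheet $Y_{i} = \{[y,i] : y\in Y\}\subset Y^{\coprod N}/Z$ carries the topology of $Y$, which is a short open-set computation using saturation: for any open $W\subset Y$, the subset of $Y^{\coprod N}$ equal to $W$ on sheet $i$ and to $W\cup(Y\setminus Z)$ on the others is open and saturated, hence exhibits $W$ as the trace on $Y_{i}$ of an open subset of $X$. Conversely, any order-two homeomorphism $\tau_{0}$ of $Y$ preserving $Z$ lifts to a diagonal homeomorphism of $Y^{\coprod N}/Z$ that commutes with the Huang action, since the latter is trivial on the $C(Y)$-tensorand and standard on $\C^{N}$; Proposition \ref{prop:directproduct} then assembles this pair into an action of $S_{N}^{\prime +}$, establishing the claimed bijective correspondence.
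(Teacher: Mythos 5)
Your proposal is correct, and it follows the same overall skeleton as the paper: Proposition \ref{prop:directproduct} to translate an action of $S_{N}^{\prime +}$ into a commuting pair $(\alpha,\beta)$, then Theorem \ref{thm:allactionsSn+} to put $\alpha$ in Huang form; your forward direction (lifting an order-two homeomorphism of $Y$ preserving $Z$ diagonally and checking commutation) is essentially the paper's explicit computation. Where you genuinely diverge is in the converse. The paper stays at the algebra level: it shows that the conditional expectation $\E_{\alpha} = (\id\otimes h_{S_{N}^{+}})\circ\alpha$ commutes with $\beta$, so that $C(X)^{\alpha}\simeq C(Y)$ is $\beta$-invariant, and then that $\beta$ permutes the $\alpha$-invariant ideals, sending the ideal of functions vanishing at a fixed point to an ideal of the same kind, whence $Z$ is globally fixed. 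You instead work pointwise with the involutive homeomorphism $\tau$ dual to $\beta$: equivariance with respect to the classical subgroup $S_{N}$ forces the permutation $\pi_{y}$ describing $\tau$ on each non-trivial orbit (in the canonical labelling coming from the fundamental domain) to centralize $S_{N}$, hence to be trivial for $N\geqslant 3$, so $\tau$ is diagonal and descends to a continuous involution of $Y$ preserving $Z$. Your route has the merit of explicitly pinning down that $\beta$ is the diagonal lift of its induced action on $Y$ — the paper's proof leaves this identification implicit once it has produced the $\Z_{2}$-action on $Y$ fixing $Z$ — at the modest cost of the continuity check for $\tau_{0}$, which as you note reduces to the observation that each sheet of $Y^{\coprod N}/Z$ is homeomorphic to $Y$ (a continuous bijection from a compact space to a Hausdorff one suffices here). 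Both arguments are complete; yours is arguably the more geometric and self-contained one for this step.
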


\begin{proof}
Let us denote by $\omega$ the generator of $\Z_{2}$. Let $X = Y^{\coprod N}/Z$ be equipped with its Huang type action $\alpha$ of $S_{N}^{+}$. An action of $\Z_{2}$ on $Y$ is given by an order $2$ homeomorphism $g\in C(Y)$ and the element $\widetilde{g} = (g, \cdots, g)\in C(Y^{\coprod N})$ is an order $2$ homeomorphism. Since it fixes $Z$ globally, it factors through an order $2$ homeomorphism of $X$ which will still be denoted by $\widetilde{g}$ and provides an extension of the action to $X$. Denoting that action by $\beta$, we have, for $f\in C(X)$,
\begin{equation*}
\beta(f) = f\otimes 1 + (f\circ \widetilde{g})\otimes \omega.
\end{equation*}
We now simply check that $\alpha$ and $\beta$ commute:
\begin{align*}
(\alpha\otimes \id)\circ\beta\left(\sum_{i=1}^{N}f_{i}\otimes e_{i}\right) & = \sum_{i, j=1}^{N}f_{i}\otimes e_{j}\otimes u_{ij}\otimes 1 + \sum_{i, j=1}^{N}(f_{i}\circ \widetilde{g})\otimes e_{j}\otimes u_{ij}\otimes \omega \\
& = (\id\otimes \Sigma)\left(\sum_{i, j=1}^{N}f_{i}\otimes e_{j}\otimes 1\otimes u_{ij} + \sum_{i, j=1}^{N}(f_{i}\circ \widetilde{g})\otimes e_{j}\otimes \omega\otimes u_{ij}\right) \\
& = (\id\otimes \Sigma)\left(\sum_{i, j = 1}^{N}\beta(f_{i})\otimes u_{ij}\right) \\
& = (\id\otimes \Sigma)\circ(\beta\otimes \id)\circ\alpha.
\end{align*}

Conversely, let $\alpha$ and $\beta$ be two commuting actions of $S_{N}^{+}$ and $\Z_{2}$ respectively on $X$. By Theorem~\ref{thm:allactionsSn+}, we know that $X$ is of Huang type. Moreover, let $\E_{\alpha} = (\id\otimes h_{S_{N}^{+}})\circ\alpha$ be the conditional expectation onto the functions which are constant on orbits. Then,
\begin{align*}
(\E_{\alpha}\otimes \id)\circ\beta & = (\id\otimes h_{S_{N}^{+}}\otimes \id)\circ(\alpha\otimes \id)\circ\beta \\
& = (\id\otimes h_{S_{N}^{+}}\otimes \id)\circ(\id\otimes \Sigma\otimes \id)\circ(\beta\otimes \id)\circ\alpha \\
& = (\id\otimes \id\otimes h_{S_{N}^{+}})\circ(\beta\otimes \id)\circ\alpha \\
& = \beta\circ\E_{\alpha}.
\end{align*}
This means that $C(X)^{\alpha}\simeq C(Y)$ is invariant under $\beta$, i.e. that there is a restricted action of $\Z_{2}$ on $Y$.

Let now $J\subset C(X)$ be a closed ideal which is invariant under $\alpha$ in the sense that $\alpha(J)\subset J\otimes 1$. Then,
\begin{align*}
\alpha\left[\left(\id\otimes\mathrm{ev}_{\omega}\right)\circ\beta(J)\right] & = (\id\otimes \mathrm{ev}_{\omega}\otimes \id)\circ(\id\otimes \Sigma)\circ(\beta\otimes \id)\circ\alpha(J) \\
& \subset (\id\otimes \id\otimes\mathrm{ev}_{\omega})\circ(\id\otimes \Sigma)\circ(\beta\otimes \id)(J\otimes 1) \\
& = \left[(\id\otimes \mathrm{ev}_{\omega})\circ\beta(J)\right]\otimes 1.
\end{align*}
Since $(\id\otimes \mathrm{ev}_{\omega})\circ\beta(f) = f\circ\widetilde{g}$, it follows that $\omega(J)$ is invariant as soon as $J$ is. This induces an isomorphism between the corresponding quotients so that if $J$ is the ideal of functions vanishing at a fixed point $z$, then $\omega(J)$ has codimension $1$, hence is also the ideal of functions vanishing at a fixed point $z'$. In other words, the action of $\Z_{2}$ fixes $Z$ globally.
\end{proof}

\subsection{Hyperoctahedral quantum group}

In this section, we will consider the hyperoctaedral quantum groups $H_{N}^{+}$ introduced in \cite{banica2007hyperoctahedral}. Let us start with the definition.

\begin{de}
Let $C(H_{N}^{+})$ be the universal C*-algebra generated by elements $(u_{ij})_{1\leqslant i, j\leqslant N}$ such that
\begin{itemize}
\item $u_{ij}^{*} = u_{ij}$ for all $1\leqslant i, j\leqslant N$;
\item $u_{ij}^{2}$ is a projection for all $1\leqslant i, j\leqslant N$;
\item $\displaystyle\sum_{k=1}^{N} u_{ik}^{2} = 1 = \sum_{k=1}^{N}u_{kj}^{2}$ for all $1\leqslant i, j\leqslant N$.
\end{itemize}
The map
\begin{equation*}
\Delta : u_{ij}\mapsto \sum_{k=1}^{N}u_{ik}\otimes u_{kj}
\end{equation*}
extends to a $*$-homomorphism yielding a compact quantum group called the \emph{hyperoctahedral quantum group} and denoted by $H_{N}^{+}$.
\end{de}

The case of quantum reflection groups is more involved than the previous ones, but we will take advantage of the fact that $H_{N}^{+}$ turns out to be a ``quantum permutation subgroup'' in disguise. Indeed, let us set $v_{ij} = u_{ij}^{2}$ and consider the following matrix
\begin{equation*}
\left(\arraycolsep=1.4pt\def\arraystretch{2.2}\begin{array}{cc}
\displaystyle\frac{v-u}{2} & \displaystyle\frac{v+u}{2} \\
\displaystyle\frac{v+u}{2} & \displaystyle\frac{v-u}{2}
\end{array}\right)
\end{equation*}
Observing that $(u_{ij} \pm v_{ij})/2$ is a projection, we see that this satisfies the defining relations of $C(S_{2N}^{+})$, hence there is a surjective $*$-homomorphism $\pi : C(S_{2N}^{+})\to C(H_{N}^{+})$. Using it, we get an action of $H_{N}^{+}$ on $\C^{2N}$ from the standard action of $C(S_{2N}^{+})$, namely
\begin{equation*}
\alpha(e_{i}) = \left\{\begin{array}{ccc}
\displaystyle\sum_{j=1}^{N}e_{j}\otimes \frac{v_{ji} - u_{ji}}{2} + \displaystyle\sum_{j=N+1}^{2N}e_{j}\otimes \frac{v_{ji} + u_{ji}}{2}, & \text{ if } & 1\leqslant i\leqslant N; \\
\displaystyle\sum_{j=1}^{N}e_{j}\otimes \frac{v_{ji} + u_{ji}}{2} + \displaystyle\sum_{j=N+1}^{2N}e_{j}\otimes \frac{v_{ji} - u_{ji}}{2}, & \text{ if } & N+1\leqslant i\leqslant 2N. \\
\end{array}\right.
\end{equation*}
This will be called the \emph{standard action} of $H_{N}^{+}$ on $\C^{2N}$ and will serve as our building block, in a similar way to what was done for quantum permutations.

We will also need some information about the representation theory of $H_{N}^{+}$, that we now recall. It was proven in \cite{banica2009fusion} that the irreducible representations of $H_{N}^{+}$ can be indexed by words on $\{0, 1\}$ with $\emptyset$ being the trivial representation and $1$ being the fundamental one. We will denote by $\mu_{w}$ the representation indexed by the word $w$. If $X$ is a compact space on which $H_{N}^{+}$ acts, it follows from \cite{freslon2021tannaka} that
\begin{equation*}
B = B_{\emptyset}\oplus B_{0}\oplus B_{1}
\end{equation*}
is an invariant subalgebra, so that there is an equivariant quotient map $\pi : X\to Y$ with $Y$ finite with spectrum contained in $\{\emptyset, 0, 1\}$. This suggests to try to parallel the previous results concerning $S_{N}^{+}$ by focusing on these three irreducible representations. Before doing that, let us recall a few facts. First, the coefficients of the irreducible representation $\mu_{0}$ generate a copy of $C(S_{N}^{+})$ inside $C(H_{N}^{+})$ and the restriction of the coproduct endows it with the compact quantum group structure of $S_{N}^{+}$. Second, it turns out that using the previous notations, $v$ is a representation equivalent to $\mu_{0}\oplus \mu_{\emptyset}$. As for the case of $S_{N}^{+}$, a crucial element of the proof will be the restricted action of the classical hyperoctahedral group $H_{N}$.

\begin{lem}\label{lem:hyperoctahedraltransitive}
Let $\alpha$ be an ergodic action of $H_{N}^{+}$ with spectrum contained in $\{\emptyset, 0, 1\}$. Then, the restricted action of $H_{N}$ is transitive.
\end{lem}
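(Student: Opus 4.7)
The plan is to reduce the spectral data to a finite classical action and then show that the classical subgroup $H_N$ already has no nonzero invariants beyond the scalars; on a finite space this is precisely transitivity.

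The first step is to prove that $X$ is finite. As is well known, $H_N^+$ is of Kac type (the matrix $u$ is orthogonal and self-adjoint entry-wise, so $S(u_{ij}) = u_{ji}$ and $S^{2} = \id$), hence its Haar state is tracial. The standard dimension--multiplicity bound for ergodic actions of Kac-type compact quantum groups then guarantees that each irreducible $\pi$ appearing in $\mathrm{Sp}(C(X),\alpha)$ does so with multiplicity at most $\dim \pi$. Combined with the hypothesis $\mathrm{Sp}(C(X),\alpha) \subset \{\mu_\emptyset, \mu_0, \mu_1\}$ this gives
\begin{equation*}
\dim C(X) = \dim B_\emptyset + \dim B_0 + \dim B_1 \leqslant 1 + (N-1)^{2} + N^{2},
\end{equation*}
so $X$ is a finite set.

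The second step is to identify the restrictions $\mu_0|_{H_N}$ and $\mu_1|_{H_N}$ and to check that neither contains a trivial subrepresentation. By the recollections made just before the statement, the coefficients of $\mu_0$ generate the embedded copy of $C(S_N^+)$ in $C(H_N^+)$, corresponding to the quantum quotient $H_N^+ \twoheadrightarrow S_N^+$. Abelianising yields the classical quotient $H_N \twoheadrightarrow S_N$, so $\mu_0|_{H_N}$ is the $(N-1)$-dimensional standard representation of $S_N$ (the complement of the trivial character inside its natural action on $\C^N$) pulled back along this quotient, and it manifestly has no trivial subrepresentation. As for $\mu_1 = u$, the abelianisation $\pi_{\text{ab}} : C(H_N^+) \to C(H_N)$ sends $u_{ij}$ to the $(i,j)$-entry function on signed permutation matrices, so $\mu_1|_{H_N}$ is the defining signed-permutation representation of $H_N = S_N \wr \Z_2$ on $\C^N$; since any generator of the $\Z_2^N$ subgroup flips the sign of at least one basis vector, no nonzero vector is fixed.

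Combining both steps, $C(X) = B_\emptyset \oplus B_0 \oplus B_1$ decomposes as an $H_N$-module into direct sums of copies of $\mu_\emptyset|_{H_N}$, $\mu_0|_{H_N}$ and $\mu_1|_{H_N}$ respectively, hence
\begin{equation*}
C(X)^{H_N} = B_\emptyset^{H_N} \oplus B_0^{H_N} \oplus B_1^{H_N} = \C \oplus \{0\} \oplus \{0\} = \C,
\end{equation*}
using $B_\emptyset = \C$ from ergodicity of $\alpha$ and the vanishing of the other two summands from the restriction analysis. Therefore the restricted $H_N$-action is ergodic on the finite set $X$, which is the same as being transitive. The main obstacle, if any, is to invoke cleanly the dimension--multiplicity bound for ergodic actions of Kac-type compact quantum groups; the rest is a direct inspection of the restriction maps.
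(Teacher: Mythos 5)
Your proof is correct and follows essentially the same route as the paper: both arguments reduce transitivity to ergodicity of the restricted $H_{N}$-action and verify that $\mu_{0}|_{H_{N}}$ and $\mu_{1}|_{H_{N}}$ contain no trivial subrepresentation (the paper proves the slightly stronger fact that both restrictions are irreducible, checking for $\mu_{1}$ that the all-one vector is not fixed by signed permutations, while you check directly that $\Z_{2}^{N}$ fixes no nonzero vector). Your preliminary finiteness step via the Kac-type multiplicity bound is correct but not strictly necessary, since for a compact group acting on a compact Hausdorff space ergodicity already implies transitivity.
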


\begin{proof}
The spectrum of the restricted action is contained in the restriction of the spectrum, hence it is enough to prove that the restrictions of $\mu_{0}$ and $\mu_{1}$ to $H_{N}$ are irreducible. For $\mu_{0}$, it is already irreducible when restricted to $S_{N}\subset H_{N}$ since it corresponds to the standard representation on $\C^{N-1}$. As for $\mu_{1}$, it is the representation of $H_{N}$ on $\C^{N}$ by signed permutation matrices. When restricted to $S_{N}$, that representation has two subrepresentations: the one given by the all-one vector $\xi$ and its orthogonal complement. Since $\xi$ is not fixed by all signed permutation matrices, we conclude that it is not a subrepresentation for $H_{N}$, hence the restriction of $\mu_{1}$ is irreducible.
\end{proof}

\begin{lem}\label{lem:hyperoctahedral_2N}
Let $N\geqslant 4$. Then, any ergodic action of $H_{N}^{+}$ on $\C^{2N}$ is either isomorphic to the standard one or induced from $S_{N}^{+}$.
\end{lem}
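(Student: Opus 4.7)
The plan is to follow the strategy of Lemma~\ref{lem:standard}, adapted by exploiting the inclusion $H_N^+ \hookrightarrow S_{2N}^+$. Writing $\alpha(e_i) = \sum_{j=1}^{2N} e_j \otimes q_{ji}$, the $e_i$'s are pairwise orthogonal projections summing to $1$ and $\alpha$ is a unital $*$-homomorphism, so the $q_{ji}$'s satisfy the defining relations of $C(S_{2N}^+)$. The universal property then produces a $*$-homomorphism $\pi \colon C(S_{2N}^+) \to C(H_N^+)$ with $\pi(p_{ji}) = q_{ji}$, and the coaction identity for $\alpha$ is equivalent to $\pi$ intertwining the coproducts. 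I would then do a dimensional analysis of the spectrum of $\alpha$: ergodicity gives $\dim B_\emptyset = 1$, so the non-trivial irreducibles must add up to $2N-1$ dimensions. For $N \geqslant 4$ the only irreducible representations of $H_N^+$ of dimension at most $2N-1$ are $\mu_0$ (of dimension $N-1$) and $\mu_1$ (of dimension $N$), since $\dim \mu_{00} = N^{2} - 3N + 1$ already exceeds $2N-1$ for $N \geqslant 5$ and a direct case-check settles $N = 4$. Solving $a(N-1) + bN = 2N-1$ in non-negative integers pins down $(a, b) = (1, 1)$, unless $b = 0$; in the latter case the spectrum is contained in the collection of irreducibles factoring through the quotient $H_N^+ \twoheadrightarrow S_N^+$, so $\alpha$ is induced from $S_N^+$ and we are in the second alternative of the lemma.

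It then remains to handle the case where the spectrum equals $\{\mu_\emptyset, \mu_0, \mu_1\}$ with each multiplicity one. The matrix coefficients of $\mu_1$ generate $C(H_N^+)$, so their presence in $\pi(C(S_{2N}^+))$ forces $\pi$ to be surjective; the magic unitary $Q = (q_{ji})$ therefore realises a $2N$-dimensional representation of $H_N^+$ with the same decomposition $\mu_\emptyset \oplus \mu_0 \oplus \mu_1$ as the standard matrix $M$ introduced before the lemma. To promote this to an isomorphism of actions I would pass to the classical shadow by composing with the abelianisation $\pi_{H_N} \colon C(H_N^+) \to C(H_N)$: Lemma~\ref{lem:hyperoctahedraltransitive} ensures that this restricted action is transitive, so $X$ is identified with a transitive $H_N$-set of cardinality $2N$. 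Combining this with the additional spectral constraints (in particular the relation $B_0 \cdot B_0 \subset B_\emptyset \oplus B_0$ coming from $\mu_0 \otimes \mu_0 = \mu_\emptyset \oplus \mu_0 \oplus \mu_{00}$ together with $B_{00} = \{0\}$) should then pin this $H_N$-action down to the natural signed-permutation action on $\{\pm 1, \ldots, \pm N\}$ up to relabelling.

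The main obstacle is this last step, namely lifting the classical identification to an equality of the quantum coefficients $q_{ji}$ with those of the standard magic unitary $M$. Since each of $\mu_0$ and $\mu_1$ appears in $Q$ with multiplicity one, the intertwiners identifying the isotypic components of $Q$ with those of $M$ are unique up to scalars, and the magic-unitary condition (rows and columns consisting of orthogonal projections summing to one) is what should pin those scalars down. A conceptual shortcut would be to invoke a Voigt-type classification of torsion actions of $H_N^+$ analogous to the one described in the remark after Theorem~\ref{thm:ergodic}, which directly identifies the only ergodic action of $H_N^+$ on a commutative finite-dimensional C*-algebra of dimension $2N$, up to equivariant isomorphism, as the standard one.
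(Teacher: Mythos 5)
Your opening moves match the paper's: the universal property of $C(S_{2N}^{+})$ produces the $*$-homomorphism $\pi$, and a dimension count (multiplicities in an ergodic action are bounded by the dimensions, and the dimensions of the spectral subspaces must sum to $2N$) restricts the spectrum. Two remarks on that count. First, your case $b=0$ is vacuous, since $a(N-1)=2N-1$ has no integer solution; the ``induced from $S_{N}^{+}$'' branch of the statement is really detected by faithfulness of the resulting representation, not by your arithmetic. Second, the paper organises the count differently: it splits $\C^{2N}$ along $f_{i}=e_{i}-e_{i+N}$ and $g_{i}=e_{i}+e_{i+N}$, obtaining two $N\times N$ representation matrices $a$ and $b$ with $a_{ij}=a_{ij}^{*}$ and $a_{ij}^{2}=b_{ij}$ projections summing to $1$ along rows and columns, and then decomposes the $N$-dimensional representations $a$ and $b$ using only $\dim\mu_{00}=(N-1)^{2}-N>N$, valid for all $N\geqslant 4$; this avoids your separate check at $N=4$, where $\dim\mu_{00}=5\leqslant 2N-1$. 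The real point of this manoeuvre, however, is that the entries of $a$ visibly satisfy the defining relations of $C(H_{N}^{+})$, so that once $a\sim\mu_{1}$ one is exactly in the situation of Lemma~\ref{lem:standard}: the endomorphism $u_{ij}\mapsto a_{ij}$ of $C(H_{N}^{+})$ sends irreducibles to equivalent irreducibles, hence preserves the Haar state, hence is an automorphism, and the action is the standard one up to isomorphism.

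The genuine gap is the one you flag yourself: the identification of a multiplicity-one action with spectrum $\{\emptyset,0,1\}$ with the standard action is never carried out. The classical-shadow route does not close it: classifying transitive $H_{N}$-sets of cardinality $2N$ is itself a non-trivial subgroup-index problem (the paper only confronts an analogue later, in the proof of Proposition~\ref{prop:standardhyperoctahedral}, via the structure of subgroups of semidirect products, and even there it only bounds the number of orbits), and a complete classical identification would still leave the quantum lift to be done, since $H_{N}$-equivariance of a bijection does not by itself yield $H_{N}^{+}$-equivariance. Your ``intertwiners unique up to scalars'' idea is the right instinct, but the scalars are pinned down by the Haar-state/automorphism argument just described, not by inspection of the magic-unitary relations; and the Voigt-type torsion classification for $H_{N}^{+}$ that you invoke as a shortcut is not established in the paper or its references, so it cannot be used as a black box. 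Replacing your final paragraph by the $\pm$-splitting into $a$ and $b$ and the automorphism argument of Lemma~\ref{lem:standard} is what completes the proof.
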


\begin{proof}
Let $\alpha$ be such an action. By the universal property of $C(S_{2N}^{+})$, there exists a $*$-homomorphism $\pi : C(S_{2N}^{+})\to C(H_{N}^{+})$ such that $\alpha = (\id\otimes \pi)\circ\alpha_{2N}$. Let us set, for $1\leqslant i, j\leqslant N$,
\begin{equation*}
a_{ij} = \pi(p_{i+N, j}) - \pi(p_{ij}); \quad b_{ij} = \pi(p_{i+N, j}) + \pi(p_{ij}); \quad f_{i} = e_{i} - e_{i+N}; \quad g_{i} = e_{i} + e_{i+N}.
\end{equation*}
Then,
\begin{equation*}
\alpha(f_{i}) = \sum_{j=1}^{N}f_{j}\otimes a_{ji} \quad\text{ and }\quad \alpha(g_{i}) = \sum_{j=1}^{N}g_{j}\otimes b_{ji}
\end{equation*}
and the fact that $\alpha$ is an action implies that $a$ and $b$ are representations of $H_{N}^{+}$. Moreover, the elements $a_{ij}^{2} = b_{ij}$ are projections summing up to $1$ on rows and columns. As a consequence, the representation $b$ is either a sum of $N$ copies of the trivial representation, or equivalent to $\mu_{0}\oplus\mu_{\emptyset}$ (any other representation has dimension at least $\dim(\mu_{00}) = (N-1)^{2} - N$ which is greater than $N$ if $N\geqslant 4$). As for $a$, it is either one of the two preceding possibilities, or equivalent to $\mu_{1}$. If $a$ is trivial, so is $b$ hence the action itself is trivial which is prevented by ergodicity. If $a\sim\mu_{0}\oplus\mu_{\emptyset}$, then all the coefficients belong to $C(S_{N}^{+})\subset C(H_{N}^{+})$, hence the action is not ergodic. As a conclusion, $a\sim \mu_{1}$. This implies that $\pi$ is onto and that the action is isomorphic to the standard one.
\end{proof}

Using this, we can study more generally actions with prescribed spectrum. To do so, let us recall that since the generators of $C(S_{N}^{+})$ satisfy all the defining relations of $C(H_{N}^{+})$, there is a surjective $*$-homomorphism $C(H_{N}^{+})\to C(S_{N}^{+})$. Composing an action with it yields the \emph{restriction} of that action to $S_{N}^{+}$. We will also use the well-known fact that the classical hyperoctahedral group $H_{N}$ is isomorphic to the semi-direct product $\Z_{2}^{N}\rtimes S_{N}$, where $S_{N}$ acts by permutation of the copies.

\begin{prop}\label{prop:standardhyperoctahedral}
Let $X$ be a (necessarily finite) space with an ergodic action of $H_{N}^{+}$ whose spectrum is contained in $\{\emptyset, 0, 1\}$. Then, $X$ is either trivial, standard or induced from the standard action of $S_{N}^{+}$.
\end{prop}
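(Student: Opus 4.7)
The strategy is to split $B = C(X) = A \oplus M$, where $A = B_{\mu_\emptyset}\oplus B_{\mu_0}$ factors through the quotient $H_N^+\twoheadrightarrow S_N^+$ and $M = B_{\mu_1}$ is the complementary ``odd'' part, and then to combine fusion rules with commutativity to constrain the structure very rigidly.

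Since the matrix coefficients of $\mu_0$ generate the subalgebra $C(S_N^+)\subset C(H_N^+)$ and $\mu_0$ corresponds there to the fundamental representation $\rho_1$, the fusion rule $\mu_0\otimes\mu_0 \cong \mu_\emptyset\oplus\mu_0\oplus\mu'$ holds, with $\mu'$ lifting $\rho_2$. The spectrum hypothesis kills $B_{\mu'}$, so $A$ is an invariant $\ast$-subalgebra and the induced action of $H_N^+$ on it factors through $S_N^+$; by Proposition \ref{prop:finiteaction}, $A\cong\C$ (trivial) or $A\cong\C^N$ with the standard action. Two further fusion non-inclusions will be needed: $\mu_\emptyset,\mu_0\not\subset \mu_0\otimes\mu_1$, which follow from Frobenius reciprocity and the above decomposition of $\mu_0\otimes\mu_0$ (since no $\mu_1$ appears there); and $\mu_1\not\subset\mu_1\otimes\mu_1$, which I will get by restricting to the classical subgroup $H_N = \Z_2^N\rtimes S_N$ and computing with the signed permutation character $\chi_{\sigma_1}(\varepsilon,\pi) = \sum_{i\in\mathrm{Fix}(\pi)}\varepsilon_i$: one has $\sum_{g\in H_N}\chi_{\sigma_1}(g)^3 = 0$, because every monomial $\varepsilon_i\varepsilon_j\varepsilon_k$ contains at least one index of odd multiplicity and hence vanishes after summing over $\{\pm 1\}^N$. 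Combined with the spectrum assumption, this yields $M\cdot M\subset A$ and $A\cdot M\subset M$.

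With this structure in hand, I proceed by cases. If $M = 0$, then $B = A$ and we obtain the trivial action or the standard $S_N^+$-action lifted through the quotient. If $M\neq 0$ and $A\cong\C$, any nonzero $f\in M$ satisfies $f^*f\in\C^\times$, hence has constant modulus and is invertible in $B$; then $f^*g\in\C$ for every $g\in M$ forces $g\in\C f$, contradicting $\dim M = m_1 N\geq N\geq 4$. If $M\neq 0$ and $A\cong\C^N$, the inclusion $A\subset B$ yields an equivariant surjection $X\to Y$ with $|Y|=N$ whose fibers $X_1,\ldots,X_N$ have a common cardinality $d$. Isotypic orthogonality identifies $M$ with the $N(d-1)$-dimensional space of functions having zero sum on each fiber, and the relation $f^2\in A$ says that $|f|$ is constant on each $X_i$. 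For $d\geq 3$, the zero-sum functions on a fiber form a $(d-1)$-dimensional space visibly containing elements of non-constant modulus (e.g.\ $(1,-1,0,\ldots,0)$), which is impossible. Hence $d = 2$, $|X| = 2N$, and the preceding lemma identifies the action as the standard one of $H_N^+$ on $\C^{2N}$.

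The main hurdle is setting up the fusion non-inclusions self-consistently; once they are in hand, the rest is a straightforward combination of Proposition \ref{prop:finiteaction}, the preceding lemma, and the fibered rigidity argument in the case $A\cong\C^N$.
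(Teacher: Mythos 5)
Your proof is correct, and it reaches the conclusion by a genuinely different route in the crucial step, namely bounding the multiplicity of $\mu_{1}$. Both arguments begin identically: $A = B_{\emptyset}\oplus B_{0}$ is an invariant subalgebra on which the action factors through $S_{N}^{+}$, hence $A\cong\C$ or $A\cong \C^{N}$ with the standard action. But where the paper then restricts to the classical group $H_{N}=\Z_{2}^{N}\rtimes S_{N}$, tracks how the generators $g_{1},\dots,g_{N}$ of $\Z_{2}^{N}$ move the $S_{N}$-blocks to show the orbit of one block meets at most three of them, and finally invokes the structure of subgroups of semidirect products to force index $2$, you instead extract from the fusion rules the inclusion $M\cdot M\subset A$ (your character computation $\sum_{g}\chi_{\sigma_{1}}(g)^{3}=0$ verifying $\mu_{1}\not\subset\mu_{1}\otimes\mu_{1}$ is a valid check, though the fusion rules of $H_{N}^{+}$ as recorded in the paper already give $\mu_{1}\otimes\mu_{1}\cong\mu_{11}\oplus\mu_{0}\oplus\mu_{\emptyset}$ directly) and then run a fiberwise modulus argument to force fibers of cardinality $2$. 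This is essentially the mechanism of Corollary \ref{cor:spectrumk} transported to $H_{N}^{+}$, and it buys you two things: it avoids the group-theoretic input (\cite{usenko1991subgroups} and the explicit generator computation), and it treats the degenerate case $A\cong\C$, $M\neq 0$ explicitly, which the paper's write-up passes over. The paper's approach, in exchange, gives more concrete information about how $H_{N}$ permutes the $S_{N}$-blocks. Two small points to tighten: the identification of $M$ with the zero-sum functions on the fibers tacitly uses that the invariant state is the uniform measure, which you should justify by the transitivity of the restricted $H_{N}$-action (Lemma \ref{lem:hyperoctahedraltransitive}), or avoid altogether by using the trick $f\mapsto f-(\id\otimes h)\circ\alpha(f)$ from the proof of Corollary \ref{cor:spectrumk}; and in the final step the preceding lemma yields ``standard or induced from $S_{N}^{+}$'' rather than ``standard'' outright, so you should note that $M\neq 0$ rules out the induced case. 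Neither point affects the validity of the argument.
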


\begin{proof}
Let $\alpha$ be such an action and consider its restriction to $S_{N}^{+}$. If it is ergodic, then $X$ has $N$ points and the spectrum of the restriction is $\{0, 1\}$ (these are the labels of irreducible representations of $S_{N}^{+}$). Since $\mu_{\emptyset}\mapsto \rho_{0}$, $\mu_{0}\mapsto \rho_{1}$ and $\mu_{1}\mapsto \rho_{1}$, we can conclude that $\mu_{0}$ has mutiplicity $1$ and $\mu_{1}$ has multiplicity $0$ in the original action. This implies that the action is induced from $S_{N}^{+}$ in the sense that
\begin{equation*}
\alpha(C(X))\subset C(X)\otimes C(S_{N}^{+})\subset C(X)\otimes C(H_{N}^{+}).
\end{equation*}

In general now, let us first look at the subspace $A = B_{\emptyset}\oplus B_{0}\subset C(X)$. Because of the fusion rules of $H_{N}^{+}$, $A$ is a subalgebra. Moreover, by definition of the spectral subspaces, $\alpha(A)\subset A\otimes C(S_{N}^{+})$ so that we have an action of $S_{N}^{+}$ on the spectrum of $A$. Since $\dim(B_{\emptyset}) = 1$, this is an ergodic action and we conclude by Theorem~\ref{thm:ergodic} that the spectrum has $N$ points. Since $u^{0}$ has dimension $N-1$, this implies that it has multiplicity one. Therefore, $X$ itself has $(m+1)N$ points, where $m$ is the multiplicity of $u^{1}$.

Moreover, the action restricted to $S_{N}^{+}$ is a disjoint union of $m+1$ standard actions, and so is the restriction to $S_{N}$. We can therefore write $X$ as the disjoint union of subsets $(X^{i})_{1\leqslant i\leqslant m+1}$ on which $S_{N}$ acts standardly. Let us fix a numbering $X^{1} = (x^{(1)}_{1}, \cdots, x^{(1)}_{N})$ such that $\sigma(x^{(1)}_{i}) = x^{(1)}_{\sigma^{-1}(i)}$ and let $\omega_{1}, \cdots, \omega_{N}$ be the generators of the factors of $\Z_{2}^{N}$ in $H_{N}^{+}$ (which acts by restriction on $X$). Then, there exists $\alpha, \beta$ such that
\begin{equation*}
\omega_{1}(x^{(1)}_{N}) = x^{(\alpha)}_{\beta}.
\end{equation*}
Moreover, for any $1 < i\leqslant N$, we have
\begin{equation*}
\omega_{1}(x^{(1)}_{i}) = \omega_{1}\tau_{iN}(x^{(1)}_{N}) = \tau_{iN}\omega_{1}(x^{(1)}_{N}) = x^{(\alpha)}_{\tau_{iN}(\beta)}
\end{equation*}
and more generally for any $i\neq k$,
\begin{equation*}
\omega_{k}(x^{(1)}_{i}) = \tau_{1k}\omega_{1}\tau_{1k}(x^{(1)}_{i}) = \left\{\begin{array}{ccc}
x^{(\alpha)}_{\tau_{1k}\tau_{iN}(\beta)}, & \text{ if } & i\neq 1; \\
x^{(\alpha)}_{\tau_{1k}\tau_{kN}(\beta)}, & \text{ if } & i = 1.
\end{array}\right.
\end{equation*}
Similarly, we can write $\omega_{1}(x^{(1)}_{1}) = x^{(\gamma)}_{\delta}$ and it then follows that
\begin{equation*}
\omega_{k}(x^{(1)}_{k}) = \tau_{1k}\omega_{1}\tau_{1k}(x^{(1)}_{k}) = x^{(\gamma)}_{\tau_{1k}(\delta)}.
\end{equation*}
It follows from these equalities that the orbit of $X^{1}$ under the action of $\Z_{2}^{N}$ is contained in $X^{1}\cup X^{\alpha}\cup X^{\gamma}$. Since the action of $S_{N}$ preserves each of these three sets, we conclude from the transitivity of the action of $H_{N}$ proven in Lemma~\ref{lem:hyperoctahedraltransitive} that $m+1\leqslant 3$.


Let us now consider the stabilizer $G$ of $x_{1}^{(1)}$ under the restricted action of $H_{N}$. Because the restricted action of $H_{N}$ is transitive by Lemma~\ref{lem:hyperoctahedraltransitive}, the index of $G$ is $(m+1)N$. On the other hand, it follows from the calculations above that $\omega_{i}\omega_{j}$ fixes $x_{1}^{(1)}$ for all $2\leqslant i < j\leqslant N$. We claim that these elements generate a subgroup $K$ of $\Z_{2}^{N}$ isomorphic to $\Z_{2}^{N-2}$. To prove this, first note that they generate the same group as the elements $\widehat{\omega}_{i} = \omega_{i}\omega_{i+1}$ for $2\leqslant i\leqslant N-2$. This is a finite abelian group with all elements of ordre $2$, hence isomorphic to $\Z_{2}^{M}$. Adding $\omega_{2}$ to it yields the subgroup generated by $\omega_{2}, \cdots, \omega_{N}$, that is to say $\Z_{2}^{N-1}$. Therefore, $M = N-2$.

Now, $G$ contains $K$ and a subgroup of $S_{N}$ isomorphic to $S_{N-1}$, which together generate a subgroup of index $4N$ in $H_{N}$. Therefore, the index of $G$ divides $4N$, i.e. $(m+1)N$ divides $4N$. Since $m+1\leqslant 3$, we must have $m+1 \leqslant 2$, hence $m \leqslant 1$ and the proof is complete.
\end{proof}

We now have the tools to classify classical actions of $H_{N}^{+}$ in the same fashion as for $S_{N}^{+}$.

\begin{thm}\label{thm:ergodichn+}
An ergodic action of $H_{N}^{+}$ on a compact Hausdorff space is either trivial, standard or induced from the standard action of $S_{N}^{+}$.
\end{thm}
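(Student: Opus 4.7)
The plan is to mimic the architecture of the proof of Theorem \ref{thm:ergodic}. Let $\alpha$ be an ergodic action of $H_N^+$ on a compact Hausdorff space $X$, with spectral decomposition $\mathcal{B} = \bigoplus_w B_w$ indexed by words $w$ on $\{0,1\}$. The goal is to show that $B_w = \{0\}$ for every word $w\notin\{\emptyset,0,1\}$, after which Proposition \ref{prop:standardhyperoctahedral} finishes the job.

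The first step is a hyperoctahedral analogue of Lemma \ref{lem:ergodicactionspectrumk}: for any word $w$ of length at least $2$, the subspace
\begin{equation*}
C_w = B_\emptyset \oplus B_0 \oplus B_1 \oplus B_w
\end{equation*}
is a $*$-subalgebra of $C(X)$. Since the irreducible representations of $H_N^+$ are self-conjugate, only closure under multiplication needs verifying, reducing to controlling $B_0\cdot B_w$, $B_1\cdot B_w$, and $B_w\cdot B_w$. The fusion rules of $H_N^+$ decompose $\mu_v\otimes\mu_w$ (for $v\in\{0,1\}$) into irreducibles indexed by words obtained by partial concatenation/contraction of $v$ and $w$. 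Realising the intertwiners $P^{v,w}_{w'}$ via two-coloured non-crossing partitions as in \cite[Sec.~6]{freslon2021tannaka} and comparing each such projection with its flip $P^{w,v}_{w'}$, the commutativity of $C(X)$ should force the vanishing of every spectral component $w'\notin\{\emptyset,0,1,w\}$ appearing in these decompositions, exactly as in the cancellation computation at the end of Lemma \ref{lem:ergodicactionspectrumk}. The inclusion $B_w\cdot B_w\subset A:=B_\emptyset\oplus B_0\oplus B_1$ follows in the same spirit, using that $w$ has length at least $2$ so $\mu_w$ does not reappear in $\mu_w\otimes\mu_w$.

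The second step is an extension of Proposition \ref{prop:standardhyperoctahedral} in the style of Corollary \ref{cor:spectrumk}: any finite ergodic action whose spectrum is contained in $\{\emptyset,0,1,w\}$ must satisfy $B_w=\{0\}$. The subalgebra $A$ is invariant with spectrum in $\{\emptyset,0,1\}$, so by Proposition \ref{prop:standardhyperoctahedral} its Gelfand spectrum $Y$ is either a point, an $N$-set with the standard $S_N^+$-action, or a $2N$-set with the standard $H_N^+$-action. Partitioning $X$ along the fibres of the equivariant surjection $X\to Y$ and running the key trick of Corollary \ref{cor:spectrumk} --- for a non-negative $f$ supported on a single fibre, $f-(\mathrm{id}\otimes h)\circ\alpha(f)$ lies in $B_1\oplus B_w$ and, by the first step, its square lies in $A$, forcing $f$ to be constant on that fibre --- yields that every fibre is a singleton, whence $B_w=\{0\}$.

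Combining both steps, $C_w$ is a finite-dimensional invariant $*$-subalgebra (multiplicities bounded by dimensions) with spectrum in $\{\emptyset,0,1,w\}$, so the second step gives $B_w=\{0\}$; since this holds for every $w\notin\{\emptyset,0,1\}$, the spectrum of $\alpha$ lies in $\{\emptyset,0,1\}$ and Proposition \ref{prop:standardhyperoctahedral} concludes. The main obstacle is the first step: the richer, two-coloured fusion rules of $H_N^+$ and the asymmetry between the letters $0$ and $1$ (the former generates the $S_N^+$-part of the representation theory, while the latter carries the signs coming from $H_N=\Z_2^N\rtimes S_N$) make the partition bookkeeping considerably more delicate than in the quantum permutation case, in particular requiring a case analysis on the first and last letters of $w$ when writing down the analogues of the partitions $h_w$ and their projections.
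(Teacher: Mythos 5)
Your overall architecture (kill every spectral subspace outside $\{\emptyset,0,1\}$ by exhibiting finite-dimensional invariant subalgebras, then invoke Proposition \ref{prop:standardhyperoctahedral}) matches the paper's, but your first step contains a genuine error and leaves its hardest claim unexecuted. The error: the irreducible representations of $H_N^+$ are \emph{not} all self-conjugate; the conjugate of $\mu_w$ is $\mu_{\overline{w}}$, where $\overline{w}$ is the reversed word. Hence for a non-symmetric word (say $w=01$) your $C_w=B_\emptyset\oplus B_0\oplus B_1\oplus B_w$ is not stable under the involution and cannot be a C*-subalgebra. The paper flags exactly this point (``the representation $\mu_w$ is not self-conjugate in general'') and works around it by first proving $B_w=\{0\}$ for all \emph{symmetric} words of length at least $2$, and only then treating a non-symmetric $w$ via $B'\oplus B_w\oplus B_{\overline{w}}$, using that $B_w\cdot B_{\overline{w}}$ lands in spectral subspaces indexed by symmetric subwords of $w\overline{w}$, which have already been shown to vanish. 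This ordering of cases is not cosmetic: your ``one word at a time'' plan gives you no control over $B_w\cdot B_w^{*}$.

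Second, the core of your step 1 --- that comparing each two-coloured partition projection with its flip kills all unwanted components of $B_v\cdot B_w$ for $v\in\{0,1\}$ and arbitrary $w$ --- is precisely the part you leave as ``should force the vanishing'', and it is not how the paper argues. For non-constant $w$ no partition computation is needed at all: commutativity identifies the decompositions of $xy$ and $yx$, and a direct comparison of the words occurring in the fusion decompositions of $vw$ versus $wv$ (they differ in length or in their first/last letters unless $w$ is constant) already yields $B_0\cdot B_w\subset B_w$ and $B_1\cdot B_w=0$. The partition computation is only needed for $w=1^k$, where it reduces to the $S_N^+$ computation of Lemma \ref{lem:ergodicactionspectrumk} because the relevant carrier spaces for $H_N^+$ sit inside those for $S_N^+$; and for $w=0^k$ one instead observes that $\bigoplus_k B_{0^k}$ is an $S_N^+$-invariant subalgebra and applies Theorem \ref{thm:ergodic} directly. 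A smaller point in your step 2: $f-(\mathrm{id}\otimes h)\circ\alpha(f)$ lies in $B_0\oplus B_1\oplus B_w$, not in $B_1\oplus B_w$, and since $B_0\cdot B_w\subset B_w$ its square need not lie in $A$, so the fibre argument as you state it does not close without further input.
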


\begin{proof}
It was proven in \cite[Thm 6.7]{freslon2021tannaka} that given a word $\w$ on $\{0, 1\}$, $B_{\w}\cdot B_{\w}\subset B_{\emptyset}\oplus B_{0}\oplus B_{1} = B'$. However, there are two ingredients missing to prove that $B'\oplus B_{\w}$ is a C*-algebra. One is the stability under multiplication of $B_{\w}$ by $B'$, and the other is the stability under involution (recall that in general, the representation $\mu_{\w}$ is not self-conjugate). Some careful work is therefore needed, and for the sake of clarity, we will split the proof into three steps.

\begin{enumerate}
\item The first step is to prove that $B'\oplus B_{\w}$ is an algebra if the word $\w$ is not constant (i.e. different from $0^{k}$ and $1^{k}$). To do this, let $x\in B_{0}$ and $y\in B_{\w}$. Then, denoting by $\w_{>1}$ (resp. $\w_{<n}$) the subword of $\w$ made of all its letters except for the first one (resp. the last one), we have
\begin{eqnarray*}
x\cdot y & \in & B_{0\w}\oplus B_{\w} \oplus \delta_{\w_{1}, 0}B_{\w_{>1}}, \\
y\cdot x & \in & B_{\w0}\oplus B_{\w} \oplus \delta_{\w_{n}, 0}B_{\w_{<n}}.
\end{eqnarray*}
Because $C(X)$ is commutative, the two elements on the left are equal. Comparing lengths and observing that $0\w = \w0$ if and only if $\w$ is constant and similarly for $\w_{>1} = \w_{<n}$, we conclude that $x\cdot y\in B_{\w}$. If now $x\in B_{1}$, we have
\begin{eqnarray*}
x\cdot y & \in & B_{1\w}\oplus B_{\overline{\w}_{1}\w_{>1}}\oplus \delta_{\w_{1},1}B_{\w_{>1}}, \\
y\cdot x & \in & B_{\w1}\oplus B_{\w_{<n}\overline{\w}_{n}}\oplus \delta_{\w_{n},1}B_{\w_{<n}},
\end{eqnarray*}
and the same arguments show that, because $\w$ is not constant, $x\cdot y = 0 = y\cdot x$, which together with what precedes gives $B'\cdot B_{\w}\subset B_{\w}$.
\item We now turn to constant words, to prove once again that $B'\oplus B_{\w}$ is an algebra. Let us consider the following subspace of $C(X)$:
\begin{equation*}
C(X)_{0} = \bigoplus_{k\in \N}B_{0^{k}}.
\end{equation*}
The fusion rules of $H_{N}^{+}$ imply that this is a C*-subalgebra, and that $\alpha(C(X)_{0})\subset C(X)_{0}\otimes C(S_{N}^{+})$. Because the representation $\mu_{0^{k}}$ corresponds to the representation $\rho_{k}$ of $S_{N}^{+}$, we conclude by Theorem~\ref{thm:ergodic} that $B_{0^{k}} = \{0\}$ if $k\geqslant 1$, hence the claim for constant words with letter $0$. As for words of the form $1^{k}$, the only non-trivial fact to check is that $B_{1}\cdot B_{1^{k}}\subset B_{1^{k}}$. But this can be done exactly as for $S_{N}^{+}$ in the proof of Lemma~\ref{lem:ergodicactionspectrumk}. Indeed, observe that when defining the projection $P_{\vert^{\odot k}}^{H_{N}^{+}}$, we substract the supremum of a family of projections which is contained in that used for $P_{\vert^{\odot k}}^{S_{N}^{+}}$. As a consequence, the former projection is dominated by the latter, so that their ranges are contained in one another. Otherwise said, the vectors $\eta_{1}$, $\eta_{2}$ constructed in the proof of Lemma~\ref{lem:ergodicactionspectrumk} also lie in the carrier spaces of the representations $u_{\vert^{\odot k-1}}$ and $u_{\vert^{\odot k}}$ for $H_{N}^{+}$. As a consequence, the argument works the same and we conclude that $B'\oplus B_{1^{k}}$ is an algebra. 
\item We are now ready to prove that the spectrum of the action is contained in $\{\emptyset, 0, 1\}$. We first assume that $\w$ is a \emph{symmetric} word -- in the sense that $\overline{\w} = \w$ -- of length at least $2$. Then $B'\oplus B_{\w}$ is a C*-subalgebra such that $B_{\w}^{2}\subset B'$ and the same argument as in the proof of Theorem~\ref{thm:ergodic} then shows that $B_{\w} = \{0\}$. If $\w$ is not symmetric, we consider $\widetilde{B} = B'\oplus B_{\w}\oplus B_{\overline{\w}}$, which is stable under the involution. To prove that this is a subalgebra, we only need to prove that $B_{\w}\cdot B_{\overline{\w}}\subset \widetilde{B}$. If $x\in B_{\w}$ and $y\in B_{\overline{\w}}$, then $x\cdot y$ belongs to a sum of spaces of the form $B_{\w'}$ where $\w'$ is a symmetric subword of $\w\overline{\w}$. Since we just proved that these spaces are trivial for words of length at least $2$, we conclude that $x\cdot y\in B'$. This shows that $\widetilde{B}$ is a subalgebra and we conclude as before that $B_{\w} = \{0\}$. Summing up, the only non-trivial spectral subspaces are $B_{\emptyset}$, $B_{0}$ and $B_{1}$ and we can now conclude by Proposition~\ref{prop:standardhyperoctahedral}.
\end{enumerate}
\end{proof}

Based on this result, one may try to describe all classical actions of $H_{N}^{+}$ using the orbit decomposition in the same way as for $S_{N}^{+}$. However, things are more complicated here. Indeed, in the case of $S_{N}^{+}$, we had only two possible orbits, of size either one or $N$. Moreover, we saw that we could somehow ``collapse'' some orbits of size $N$ to trivial orbits, and that this exhausts all possibilites. For $H_{N}^{+}$, we have three possible orbits and it is therefore natural to expect that orbits of size $2N$ can collapse either to trivial orbits or to orbits of size $N$. We will now show that this is indeed the case.

\begin{ex}
Let $Y$ be a compact space and let $Z_{1}\subset Y$ be a closed subset. We consider the space $Y^{\coprod 2N}$ with the action of $H_{N}^{+}$ coming from the identification
\begin{equation*}
C(Y^{\coprod 2N}) \cong C(Y, \C^{2N}) \cong C(Y)\otimes \C^{2N}
\end{equation*}
and the standard action of $H_{N}^{+}$ on the second tensor. Then, we consider the following C*-subalgebra
\begin{equation*}
A_{Z_{1}} = \left\{f\in C(Y, \C^{2N}) \mid f(x)\in \mathrm{span}\{(e_{i} + e_{i+N}), 1\leqslant i\leqslant N\}, \text{for all } x\in Z_{1}\right\}.
\end{equation*}
In other words, a function $f = \sum_{i}f_{i}\otimes e_{i}$ is in $A_{Z_{1}}$ if $f_{i} = f_{i+N}$ for all $1\leqslant i\leqslant N$. We claim that $A_{Z_{1}}$ is globally invariant under the action. Indeed, for $f\in A_{Z_{1}}$ and $x\in Z_{1}$, we have
\begin{align*}
(\mathrm{ev}_{x}\otimes \id\otimes \id)\circ\alpha(f) & = \sum_{i=1}^{N}\sum_{j=1}^{N}f_{i}(x)\otimes e_{j}\otimes \frac{v_{ji} - u_{ji}}{2} + \sum_{i=1}^{N}\sum_{j=N+1}^{2N}f_{i}(x)\otimes e_{j}\otimes \frac{v_{ji} + u_{ji}}{2} \\
& \phantom{ =} + \sum_{i=N+1}^{2N}\sum_{j=1}^{N}f_{i}(x)\otimes e_{j}\otimes \frac{v_{ji} + u_{ji}}{2} + \sum_{i=N+1}^{2N}\sum_{j=N+1}^{2N}f_{i}(x)\otimes e_{j}\otimes \frac{v_{ji} - u_{ji}}{2} \\
& = \sum_{i=1}^{N}\sum_{j=1}^{N}f_{i}(x)\otimes e_{j}\otimes \frac{v_{ji} - u_{ji}}{2} + \sum_{i=1}^{N}\sum_{j=1}^{N}f_{i}(x)\otimes e_{j+N}\otimes \frac{v_{ji} + u_{ji}}{2} \\
& \phantom{ =} + \sum_{i=1}^{N}\sum_{j=1}^{N}f_{i+N}(x)\otimes e_{j}\otimes \frac{v_{ji} + u_{ji}}{2} + \sum_{i=1}^{N}\sum_{j=1}^{N}f_{i+N}(x)\otimes e_{j+N}\otimes \frac{v_{ji} - u_{ji}}{2} \\
& = \sum_{i=1}^{N}\sum_{j=1}^{N}f_{i}(x)\otimes (e_{j}+e_{j+N})\otimes \frac{v_{ji} - u_{ji}}{2} \\
& \phantom{ =} + \sum_{i=1}^{N}\sum_{j=1}^{N}f_{i}(x)\otimes (e_{j}+e_{j+N})\otimes \frac{v_{ji} + u_{ji}}{2}
\end{align*}
from which the claim follows.
\end{ex}

One can of course also collapse orbits to the trivial one, by using the same procedure as for Huang actions: given closed subsets $Z_{0}\subset Z_{1}\subset Y$, simply set
\begin{equation*}
A_{Z_{0}, Z_{1}} = \{f\in A_{Z_{1}} \mid f(x) \in \C\xi \text{ for all } x\in Z_{0}\}.
\end{equation*}
The fact that that subalgebra still carries an action is proven in the same way as for Huang actions in \cite{huang2013faithful}. We are now ready to describe all actions of $H_{N}^{+}$.

\begin{prop}
Let $X$ be a compact Hausdorff space with a non-trivial action of $H_{N}^{+}$. Then, there exists a compact Huasdorff space $Y$, and closed subsets $Z_{0}\subset Z_{1}\subset Y$ such that the action is equivariantly isomorphic to the standard action of $H_{N}^{+}$ on $A_{Z_{0}, Z_{1}}$.
\end{prop}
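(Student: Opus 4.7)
The plan is to mirror the structure of the proof of Theorem~\ref{thm:allactionsSn+}, using Theorem~\ref{thm:ergodichn+} as the ergodic-orbit input. By Theorem~\ref{thm:ergodichn+}, every orbit of $H_N^+$ on $X$ has cardinality $1$, $N$, or $2N$, and in the two non-trivial cases the restricted action of the classical group $H_N = \Z_2^N \rtimes S_N$ is transitive (as in Lemma~\ref{lem:hyperoctahedraltransitive}). Hence the orbit decomposition coincides with that of the classical action, and the problem reduces to building a fundamental domain for $H_N$ on $X$ with compatible labelings on each orbit type.

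First I would set $Z_0 = \{x \in X : |\O(x)| = 1\}$ and $Z_1 = \{x \in X : |\O(x)| \leqslant N\}$. The set $Z_0$ is closed by Proposition~\ref{prop:Zclosed}, and $Z_1$ is closed by the same kind of argument: if $\O(x)$ has size $2N$, then the $2N$ points of the orbit admit pairwise disjoint open neighborhoods, whose $H_N$-translates remain disjoint on a small enough neighborhood of $x$, which therefore lies outside $Z_1$. Taking $Y = X/H_N$ with the quotient topology (compact Hausdorff since $H_N$ is finite) and writing $p : X \to Y$ for the quotient map, the images $\bar{Z}_0 \subset \bar{Z}_1 \subset Y$ of $Z_0$ and $Z_1$ are closed.

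Next I would construct a coherent labeling of the non-trivial orbits, analogous to Proposition~\ref{prop:fundamentaldomain}. For each $\bar{x} \in Y \setminus \bar{Z}_0$, the goal is to continuously enumerate $p^{-1}(\bar{x})$ by $\{1, \ldots, 2N\}$ when $\bar{x} \notin \bar{Z}_1$, or by $\{1, \ldots, N\}$ with the convention $x_i = x_{i+N}$ when $\bar{x} \in \bar{Z}_1 \setminus \bar{Z}_0$, in both cases so that the standard $H_N$-action on $\{1,\ldots,2N\}$ (namely $\sigma \in S_N$ permuting the first and last $N$ indices in parallel, and $g_k \in \Z_2^N$ swapping $k$ and $k+N$) matches the action on $X$. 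Given such a labeling, the map
\begin{equation*}
\Phi : C(X) \longrightarrow C(Y) \otimes \C^{2N}, \qquad \Phi(g)(\bar{x}) = \sum_{i=1}^{2N} g(x_i)\, e_i,
\end{equation*}
automatically lands in $A_{\bar{Z}_0, \bar{Z}_1}$: on a $2N$-orbit it recovers the full $2N$-vector, on an $N$-orbit the identification $x_i = x_{i+N}$ forces $\Phi(g)(\bar{x}) \in \mathrm{span}\{e_i + e_{i+N}\}$, and at a fixed point all coordinates coincide so $\Phi(g)(\bar{x}) \in \C\xi$. That $\Phi$ is a $*$-isomorphism onto $A_{\bar{Z}_0, \bar{Z}_1}$ follows essentially by construction, and equivariance is verified orbit by orbit using the three cases of Theorem~\ref{thm:ergodichn+}, exactly as at the end of the proof of Theorem~\ref{thm:allactionsSn+}.

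The main obstacle is the coherent construction of the labeling across the three strata $X \setminus Z_1$, $Z_1 \setminus Z_0$, and $Z_0$. Inside each stratum the local section argument of Lemma~\ref{lem:localsection} and the Zorn-type maximality argument of Proposition~\ref{prop:fundamentaldomain} apply directly to the $H_N$-action, giving a locally continuous labeling. The delicate point is at the boundary between strata: one must ensure that as a $2N$-orbit degenerates to an $N$-orbit the labeling degenerates by merging $x_i$ with $x_{i+N}$ (and not some other pair), and similarly that all labels collapse correctly at $Z_0$. This requires pinning down, up to inner automorphism of $H_N$, the possible stabilizers of $N$- and $2N$-orbits; an analogue of Lemma~\ref{lem:indexNsubgroups} for $H_N$ classifying index-$N$ and index-$2N$ subgroups should suffice to rule out exceptional configurations, with small values of $N$ handled separately as in Lemma~\ref{lem:smallN}.
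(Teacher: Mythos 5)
Your outline identifies the right target and the right obstacles, but it stops precisely where the work is: the ``coherent labeling across orbits'' is the whole content of the proposition, and the tools you invoke for it do not carry over to $H_{N}$. First, the hoped-for analogue of Lemma~\ref{lem:indexNsubgroups} is false: $H_{N}=\Z_{2}^{N}\rtimes S_{N}$ has at least two conjugacy classes of subgroups of index $2N$, namely the point stabilizer $\{(\epsilon,\sigma)\mid \sigma(1)=1,\ \epsilon_{1}=1\}$ and the subgroup $\{(\epsilon,\sigma)\mid \sigma(1)=1,\ \prod_{i}\epsilon_{i}=1\}$, which are not conjugate since their intersections with $\Z_{2}^{N}$ are not exchanged by any coordinate permutation. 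Second, and more seriously, even after discarding non-standard transitive actions (which you may do, since Theorem~\ref{thm:ergodichn+} already guarantees each orbit is standard), the mechanism of Proposition~\ref{prop:fundamentaldomain} breaks down: in the standard action of $H_{N}$ on $2N$ points the two antipodal points $\pm i$ have the \emph{same} stabilizer, so the step deriving a contradiction from $\Stab(\sigma^{-1}(x))=\Stab(x)$ for $\sigma\notin\Stab(x)$ yields nothing, and matching stabilizers pins down your label only up to the sign flip. This is exactly the degeneration ambiguity you flag at the interface between the $2N$-stratum and the $N$-stratum, and your proposal does not resolve it.

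The paper sidesteps all of this by a different route. It restricts the action to the quantum subgroup $S_{N}^{+}\subset H_{N}^{+}$ (via $u_{ij}\mapsto p_{ij}$), under which each $2N$-orbit splits into two standard $N$-orbits while $N$-orbits and fixed points are unchanged, and then applies Theorem~\ref{thm:allactionsSn+} wholesale: this produces the space $Y$, the identification of $X$ as a Huang-type space over $Z_{0}$, and the coherent $S_{N}$-labeling in one stroke, with no new group-theoretic input. What remains is only the action on $Y$ of a single residual reflection of $H_{N}$, an involution swapping the two points of $Y$ lying in a common $2N$-orbit and fixing everything else; its fixed-point set is the closed set $Z_{1}$, and the sign ambiguity you struggle with is absorbed into this involution rather than resolved by hand. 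If you want to salvage your direct approach you would need to prove $H_{N}$-versions of Lemma~\ref{lem:localsection} and Proposition~\ref{prop:fundamentaldomain} that account for the antipodal degeneracy; reducing to $S_{N}^{+}$ is both shorter and avoids the false subgroup lemma.
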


\begin{proof}
Restricting the action to $S_{N}^{+}$, we already know by Theorem~\ref{thm:allactionsSn+} that $X$ is homeomorphic to $Y^{\coprod N}/Z_{0}$ for some compact Hausdorff space $Y$ and $Z_{0}\subset Y$ closed. Moreover, any orbit of size $N$ in the original action is mapped bijectively and equivariantly onto an orbit of size $N$ of the restricted action, while orbits of size $2N$ split into two orbits of size $N$. In particular, $Z_{0}$ is the set of fixed points of the original action.

Let us now consider one copy of $Y$. It contains one point of each orbit of size $N$, and two points of each orbit of size $2N$. Considering the restricted action of $H_{N} = \Z_{2}^{N}\rtimes S_{N}$, we know that there is one copy of $\Z_{2}$ which acts on $Y$ by exchanging the two points of any $2N$-orbits and fixing everything else. As a consequence, the set $Z_{1}$ of points belonging to an orbit of size $1$ or $N$ is closed and contains $Z_{0}$. We therefore have a surjection $Y^{\coprod 2N}\to X$ exactly as in the example above. The end of the proof now follows by the same arguments as for Theorem~\ref{thm:allactionsSn+} since it is enough to work orbit-wise.
\end{proof}

One important fact in the result above is that to describe all faithful actions of a compact quantum group, one needs to understand all its ergodic actions, including the non-faithful ones. In other words, faithfulness is not a good property to consider for the purpose of classifying actions, compared to ergodicity.

\subsection{Free products}

There is one last easy quantum group that we have not considered yet, namely $B_{N}^{+\sharp} = B_{N}^{+}\ast \Z_{2}$. There does not seem to be a clear way of describing actions of a free product in terms of actions of the factors in general, but here we only consider classical actions and we already know that $B_{N}^{+}$ can only act trivially on classical spaces. Moreover, any action of $\Z_{2}$ yields by induction an action of $B_{N}^{\sharp +}$ and it is therefore natural to wonder whether these are the only ones. The next result shows that this is indeed the case.

\begin{prop}
Any action of $B_{N}^{\sharp +}$ on a classical compact Hausdorff space is induced from an action of $\Z_{2}$.
\end{prop}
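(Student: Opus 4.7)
The plan is to exploit the free product structure $C(B_N^{\sharp +}) = C(B_N^+) \ast C(\Z_2)$ together with the spectral decomposition of the action. The free product carries two canonical surjective Hopf $*$-homomorphisms $\pi_1 : C(B_N^{\sharp +}) \to C(B_N^+)$ and $\pi_2 : C(B_N^{\sharp +}) \to C(\Z_2)$, obtained by applying the counit on the other factor (in the same spirit as the maps used in Proposition \ref{prop:directproduct}, though here no commutation condition is imposed). Any action $\alpha : C(X) \to C(X) \otimes C(B_N^{\sharp +})$ therefore restricts to actions $\alpha_1 = (\id \otimes \pi_1) \circ \alpha$ of $B_N^+$ and $\alpha_2 = (\id \otimes \pi_2) \circ \alpha$ of $\Z_2$. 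By Corollary \ref{cor:bistochastic}, the action $\alpha_1$ must be trivial.

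I would then turn to the spectral decomposition of $\alpha$. For each $\mu \in \Irr(B_N^{\sharp +})$ with $B_\mu \neq \{0\}$, the action restricts on $B_\mu$ to a direct sum of copies of $\mu$, so applying $\id \otimes \pi_1$ gives a direct sum of copies of the restriction $\mu_{|B_N^+}$ on $B_\mu$; the triviality of $\alpha_1$ then forces $\mu_{|B_N^+}$ to consist only of trivial summands whenever $B_\mu \neq \{0\}$. The representation-theoretic core of the argument is the claim that \emph{an irreducible corepresentation $\mu$ of $B_N^+ \ast \Z_2$ whose restriction to $B_N^+$ is trivial must factor through $\pi_2$, in the sense that all its matrix coefficients lie inside the subalgebra $C(\Z_2) \hookrightarrow C(B_N^{\sharp +})$}. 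This is a general feature of free products of compact quantum groups: corepresentations of $\G \ast \HH$ on a finite-dimensional Hilbert space $H$ correspond bijectively, through the pair $(\pi_1, \pi_2)$, to pairs of corepresentations of $\G$ and $\HH$ on $H$, and when the $\G$-member of the pair is trivial the full corepresentation is nothing but the pullback along $\pi_2$ of the $\HH$-member.

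Combining these two observations, the only irreducible representations that can appear in $\mathrm{Sp}(C(X), \alpha)$ are the trivial representation and the sign representation of $\Z_2$, both pulled back along $\pi_2$. It follows that $\alpha(B_\mu) \subset B_\mu \otimes C(\Z_2)$ for every spectral subspace, and passing to the closure of the dense $*$-subalgebra $\bigoplus_\mu B_\mu$ yields $\alpha(C(X)) \subset C(X) \otimes C(\Z_2)$. In other words, $\alpha$ is induced from the $\Z_2$-action $\alpha_2$, which is exactly what we have to prove.

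The delicate point will be justifying the representation-theoretic claim in a self-contained way. A concrete route is to work at the level of the dense Hopf $*$-algebra $\O(B_N^+ \ast \Z_2) = \O(B_N^+) \ast \O(\Z_2)$, where irreducible comodules are classified by alternating reduced words in non-trivial irreducibles of the two factors in the spirit of Wang, and to check directly that the restriction of a non-empty word to $B_N^+$ cannot be a sum of trivials unless every letter of the word belongs to $\Irr(\Z_2) \setminus \{1\}$, which, since $\Z_2$ has a unique non-trivial irreducible, forces the word to have length one and to coincide with the sign character. Alternatively one can invoke the universal property of the universal C*-algebraic free product applied to the duals of $B_N^+$ and $\Z_2$, which directly gives the claimed bijection between corepresentations of $\G \ast \HH$ and pairs of corepresentations.
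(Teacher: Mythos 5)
Your proof is correct (for $N\geqslant 4$, the range where Corollary \ref{cor:bistochastic} applies) but follows a genuinely different route from the paper's. The paper never uses the quotient map onto $B_{N}^{+}$: it works inside $C(X)$ itself, first in the ergodic case, showing via the fusion rules and commutativity that $B' = B_{\emptyset}\oplus B_{\omega}\oplus B_{u_{1}}\oplus B_{\omega u_{1}}\oplus B_{u_{1}\omega}\oplus B_{\omega u_{1}\omega}$ is a finite-dimensional C*-subalgebra, then classifying the possible dual quantum subgroups of $B_{N}^{\sharp +}$ through their fusion rings to see that the resulting finite action must land in $C^{*}(\Z_{2})$, then killing the remaining spectral subspaces by a symmetric/non-symmetric word argument, and finally reducing the general case to the ergodic one via orbits. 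Your argument short-circuits all of this: triviality of $(\id\otimes\pi_{1})\circ\alpha$ (Corollary \ref{cor:bistochastic}) forces $(\id\otimes\pi_{1})(\mu)$ to be an identity matrix for every $\mu$ in the spectrum (using injectivity of a non-zero $T\in\Mor_{\G}(\mu,\alpha)$), and Wang's description of $\Irr(B_{N}^{\sharp +})$ by alternating words then leaves only $\emptyset$ and $\omega$. This is shorter, requires no ergodicity reduction and no sub-fusion-ring classification; what it trades away is uniformity with the rest of the paper, since it relies crucially on the special feature that one free factor admits no non-trivial classical action at all.

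One caveat: of the justifications you offer for the key representation-theoretic claim, only the word-classification one is valid. Corepresentations of $\G\ast\HH$ on a fixed space $H$ do \emph{not} correspond bijectively to pairs of corepresentations of $\G$ and $\HH$ on $H$: already for $\widehat{\Z}\ast\widehat{\Z}=\widehat{F_{2}}$ the map $u\mapsto\left((\id\otimes\pi_{1})(u),(\id\otimes\pi_{2})(u)\right)$ is far from injective, and the matrix product of two corepresentations on the same space is in general not a corepresentation, so there is no inverse construction either. The correct and sufficient input is Wang's theorem that the irreducibles of $\G\ast\HH$ are exactly the alternating tensor products of non-trivial irreducibles of the factors; applying $\pi_{1}$ to such a word sends each $\Z_{2}$-letter to $1$ and fixes the $B_{N}^{+}$-letters, so the image is a tensor product of non-trivial irreducibles of $B_{N}^{+}$, which cannot be an identity matrix unless the word contains no such letter. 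Keep that version and drop the ``bijection with pairs'' remark.
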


\begin{proof}
The proof is very close to that of Theorem~\ref{thm:ergodichn+} but things are even simpler since the representation theory of free products is more rigid than that of $H_{N}^{+}$. Recall from \cite{wang1995free} that the irreducible representations of $B_{N}^{\sharp+}$ are of the form $\w = \omega^{\epsilon_{1}}u_{n_{1}}\cdots \omega^{\epsilon_{k}}u_{n_{k}}\omega^{\epsilon_{k+1}}$ where $\omega$ is the non-trivial representation of $\Z_{2}$ (i.e. a self-adjoint unitary satisfying $\D(\omega) = \omega\otimes \omega$). 

We first assume that the action is ergodic. It was proven in \cite[Thm 6.7]{freslon2021tannaka} that given a representation $\w$, $B_{\w}\cdot B_{\w}$ is contained in
\begin{equation*}
B' = B_{\emptyset}\oplus B_{\omega}\oplus B_{u_{1}} \oplus B_{\omega u_{1}}\oplus B_{u_{1}\omega}\oplus B_{\omega u_{1}\omega}.
\end{equation*} 
Let us consider for the moment $\w = \omega u_{1}$. Because $u_{1}\otimes \w$ and $\w\otimes u_{1}$ have no common subrepresentation (the first term cannot be the same), $B_{u_{1}}\cdot B_{\w} = \{0\}$ and similarly for $B_{\omega}\cdot B_{\w}$ and $B_{u_{1}\omega}\cdot B_{\w}$. Comparing now
\begin{equation*}
\omega u_{1}\omega\otimes \w = \omega u_{2} \oplus \omega \quad \& \quad \w\otimes \omega u_{1}\omega = \omega u_{1}\omega u_{1}\omega
\end{equation*}
also yields $B_{\omega u_{1}\omega}\cdot B_{\w} = \{0\}$. The same arguments apply to $\w = u_{1}\omega$ and $\w = \omega u_{1}\omega$, eventually proving that $B'$ is a C*-algebra. As a consequence, we have an action on a finite space, which must therefore come from a $*$-homomorphism $C(S_{d}^{+})\to C(B_{N}^{\sharp +})$ for some $d\in \N$. Dual quantum subgroups correspond to sub-fusion rings, and an inspection of the fusion rules of $B_{N}^{\sharp +}$ shows that the possible dual quantum subgroups are given by the C*-subalgebras $\C$, $C^{*}(\Z_{2})$, $C(PB_{N}^{\sharp+})$ and $C(B_{N}^{\sharp+})$. The last two ones are not generated by projections because their abelianisations are not, hence the action is either trivial or induced from $\Z_{2}$. In both cases, $B'\subset B_{\emptyset}\oplus B_{\omega}$.

Observe now that for any word $\w$, any subrepresentations of $\omega\otimes \w$ and $\w\otimes \omega$ respectively do not start with the same letter, hence $B_{\omega}\cdot B_{\w} = \{0\}$. This shows that if $\w$ is symmetric in the sense that $\overline{\w} = \w$, then $B'\oplus B_{\w}$ is a C*-algebra and the previous argument yields $B_{\w} = \{0\}$. Is $\w$ is not symmetric, we consider $B'\oplus B_{\w}\oplus B_{\overline{\w}}$ and observe as in the proof of Theorem~\ref{thm:ergodichn+} that $B_{\w}\cdot B_{\overline{\w}}$ is contained in a sum of spectral subspaces corresponding to symmetric words, hence contained in $B'$ so that we can again conclude that $B_{\w} = \{0\}$. Summing up, the only non-zero spectral subspace is $B_{\omega}$, which exactly means that the action is induced from $\Z_{2}$.

If the action now is arbitrary, let us consider a function $f\in B_{\w}$ for some word $\w\neq \omega$ and $x\in X$. Restricting to the orbit of $x$, we have an ergodic action, hence $f_{\mid\mathcal{O}(x)} = 0$ by the first part of the proof. Since this holds for all $x$, we conclude that $f = 0$. In other words, the only non-trivial spectral subspace is once again $B_{\omega}$, and the proof is complete.
\end{proof}

\bibliographystyle{smfplain}
\bibliography{quantum}

\end{document}